\newtheorem{twr}{Theorem}
\newtheorem{prop}{Proposition}[section]
\newtheorem{define}{Definition}[section]
\newtheorem{lem}[prop]{Lemma}
\numberwithin{equation}{section}
\newcommand{\dist}{{\operatorname{dist}}}
\newcommand{\Ker}{{\operatorname{Ker}}}
\newcommand{\Crit}{{\operatorname{Crit}}}
\newcommand{\Hess}{{\operatorname{Hess}}}
\newcommand{\bigslant}[2]{{\raisebox{.2em}{$#1$}\left/\raisebox{-.2em}{$#2$}\right.}}
\def\namedlabel#1#2{\begingroup
   \def\@currentlabel{#2}%
   \label{#1}\endgroup
}
\newcommand{\etat}{{\fontfamily{cmr}\selectfont \ng}}
\begin{document}

\title{BOUNDS FOR TENTACULAR HAMILTONIANS}

\author{FEDERICA PASQUOTTO}

\address{Department of Mathematics, Vrije Universiteit Amsterdam\\
De Boelelaan 1081, 1081 HV Amsterdam, Netherlands\\
f.pasquotto@vu.nl}

\author{JAGNA WI\'{S}NIEWSKA}

\address{Department of Mathematics, Eidgen\"{o}ssische Technische Hochschule Z\"{u}rich\\
Raemistrasse 101, 8092 Z\"{u}rich, Switzerland \\
jagna.wisniewska@math.ethz.ch}

\begin{abstract}
This paper represents a first step towards the extension of the definition of Rabinowitz Floer homology to non-compact energy hypersurfaces in exact symplectic manifolds. More concretely, we study under which conditions it is possible to establish $L^\infty$-bounds for the Floer trajectories of a Hamiltonian with non-compact energy levels. Moreover, we introduce a class of Hamiltonians, called \textit{tentacular Hamiltonians} which satisfy the conditions: how to define RFH for these examples will be the subject of a follow-up paper. 
\end{abstract}

\maketitle

\tableofcontents

\section{Introduction}
The application of symplectic techniques to Hamiltonian and Reeb dynamics is one of the principal and most exciting directions within the field of symplectic topology. 
In particular, the problem of existence of periodic orbits, generally referred to as the \emph{Weinsten Conjecture}, has received a lot of attention. 
Ever since the introduction of Gromov's holomorphic curves and Floer's techniques to the symplectic world, the approach to this problem has often been based on the properties of spaces of holomorphic curves (or Floer trajectories) with prescribed asymptotics.
For instance, Hofer's proof of the Weinstein Conjecture for the three-dimensional sphere in \cite{Hofer1993} and Taubes' proof for arbitrary closed, contact $3$-manifolds in \cite{Taubes2007} are obtained by studying holomorphic curves in the symplectization of the given contact manifold. 
Rather recently, there has been increasing interest in the question of existence of periodic orbits on non-compact energy levels (in the Hamiltonian case) or non-compact contact manifolds (in the case of Reeb dynamics). 
While the question of existence of periodic orbits in a non-compact setting is very natural from the point of view of applications (think, for instance, of celestial mechanics or hydrodynamics), it was only addressed at a later stage because of the obvious additional geometric and analytical difficulties. The first significant results in this direction, applying to mechanical hypersurfaces in standard symplectic space and cotangent bundles,  were obtained in \cite{Pasquotto2009} and its sequel \cite{rot2016}, and in \cite{Suhr2016}. In \cite{DelPino2017}, the authors prove the Weinstein conjecture for a special, very interesting class of open contact manifolds, namely those that occur as overtwisted leaves of a contact foliation in a closed manifold.  
  
Rabinowitz Floer Homology is a symplectic invariant defined by Cieliebak and Frauenfelder in \cite{CieliebakFrauenfelder2009} and associated to exact contact hypersurfaces in an exact convex symplectic manifold. Its generators are the critical points of the so called Rabinowitz action functional for a suitable choice of defining Hamiltonian. This action functional was originally used by Rabinowitz in \cite{Rabinowitz1978} to prove existence of periodic orbits on star-shaped hypersurfaces in $\mathbb{R}^{2n}$ and it differs from Floer's original symplectic action functional by a Lagrangian multiplier, which forces the periodic solutions to lie on a prescribed energy surface. In this sense, it looks like a very suitable tool to study the Weinstein Conjecture. 
In fact, Rabinowitz Floer Homology vanishes if the hypersurface is displaceable and is isomorphic to the singular homology if the hypersurface does not carry any contractible periodic orbits \cite{CieliebakFrauenfelder2009}. These properties can be used to recover some (known) instances of the Weinstein Conjecture. 
The introduction of a Lagrange multiplier, though, has the following implications: first of all, if we denote by $\mathcal{L}$ the loop space of the symplectic manifold, then  the critical set, as a subset of $\mathcal{L}\times \mathbb{R}$, is always unbounded. Moreover, the critical set of the action functional is not generically Morse any more, but only Morse-Bott. In particular, it always contains a component diffeomorphic to the fixed energy level, which is also unbounded if the energy level is non-compact.  

After it was defined, Rabinowitz Floer Homology has been connected to several other important topics in symplectic and contact topology. For instance, there exists a long exact sequence relating RFH to another symplectic invariant, namely symplectic homology (or Floer homology of symplectic manifolds with contact type boundary): this sequence was constructed in \cite{Cieliebak2010}, and it was subsequently used to compute the RFH groups for the unit cotangent sphere bundle inside the cotangent bundle of a closed manifold. In \cite{Abbon2009} and \cite{merry2011} the Rabinowitz Floer chain complex has been related to the Morse (co)chain complex, and this resulted in the computation of the Rabinowitz Floer homology of some hypersurfaces in twisted cotangent bundles. RFH of  Brieskorn manifolds has been studied in \cite{Fauck2015}, and used to study fillable contact structures on closed manifolds. Moreover, RFH has been linked to the existence of leaf-wise intersection points \cite{Albers2010}, via a particular perturbation of the Rabinowitz action functional, and to questions of orderability and non-squeezing in contact geometry \cite{Albers2013}.

In this paper we lay the foundations for the extension of Rabinowitz Floer Homology to certain families of non-compact hypersurfaces in convex symplectic manifolds. More precisely, we study and prove compactness results for the moduli spaces of the Floer trajectories which are involved in the definition of the differential for the homology complex. We plan to address in a follow-up paper the question of which conditions guarantee that the homology groups are well-defined. Ultimately, our hope is that we will be able to compute the Rabinowitz Floer homology for some examples of non-compact hypersurfaces and that this will eventually lead to a proof of the Weinstein conjecture for these examples.

Let us now take a closer look at the construction of Rabinowitz Floer Homology. Consider an exact symplectic manifold $(M, \omega=d\lambda)$ and a hypersurface $\Sigma\subseteq M$, which is exact, that is, $\lambda |_{\Sigma}$ is a contact form on $\Sigma$. If we additionally require $(M, \omega)$ to be convex at infinity and $\Sigma$ to be compact and bounding, i.e. $M\setminus \Sigma$ consists of two connected components, a pre-compact and a non-compact one, then $\Sigma$ is called an exact convex hypersurface in the exact convex symplectic manifold $(M,\omega)$. 
To construct Rabinowitz Floer homology, one starts by choosing a Hamiltonian $H\in C^{\infty}(M)$ defining $\Sigma$ as a regular level set, say  $\Sigma= H^{-1}(0)$ and $dH|_{\Sigma}\neq 0$. To such a Hamiltonian $H$ one can associate the functional 
\begin{align*}
\mathcal{A}^{H} & :C^{\infty}(S^{1},M)\times \mathbb{R} \to \mathbb{R}, \\
\mathcal{A}^{H}(v,\eta) & = \int_{S^{1}} \lambda(\partial_{t}v)-\eta\int_{S^{1}} H(v),
\end{align*}
where $\lambda$ is a primitive of $\omega$. Note that $\mathcal{A}^{H}$ does not depend on the choice of $\lambda$. The above functional was first defined by Rabinowitz in  \cite{Rabinowitz1978} and it is therefore called the \emph{Rabinowitz action functional}. The periodic orbits of the Hamiltonian vector field $X^{H}$ induced by $H$ on $\Sigma$ are the critical points of the Rabinowitz action functional. The parameter $\eta$ in the functional can be regarded as the period of the loop. Indeed, there is a correspondence between elements of $C^{\infty}(S^{1},M)\times \mathbb{R}\setminus\{0\}$ and loops of period $\eta\neq 0$ given by:
$$
(v(t),\eta)\quad \eta\neq 0 \quad \longleftrightarrow\quad v\Big(\frac{t}{\eta}\Big)=:\tilde{v}(t) \in C^{\infty}\big(\bigslant{\mathbb{R}}{|\eta| \mathbb{Z}},M\big).
$$

Besides being the period of the loop, the parameter $\eta$ has another function: it acts as a Lagrange multiplier, picking out only the periodic orbits of the Hamiltonian vector field which lie on $\Sigma$. Indeed, if we calculate the derivative of $\mathcal{A}^{H}$ with respect to a variation $(\xi, \sigma) \in T_{(v,\eta)}(C^{\infty}(S^{1},M)\times \mathbb{R}) =C^{\infty}(S^{1},v^{*}TM)\times \mathbb{R} $ we obtain:
$$
d\mathcal{A}^{H}_{(v,\eta)}(\xi, \sigma) = \int \omega (\xi,\partial_{t}v-\eta X^{H}) - \sigma\int H(v),
$$
where $X^{H}$ denotes the Hamiltonian vector field on $(M ,\omega)$ induced by $H$.

Notice that every point of $M$ can be viewed as a constant loop. This yields an embedding of the manifold of constant loops $M\times \{0\}$ into the domain of the action functional:
$$
M\times \{0\} \hookrightarrow C^{\infty}(S^{1},M)\times \mathbb{R}.
$$
The critical set of $\mathcal{A}^{H}$ consists of the constant loops in $\Sigma\times\{0\}$ and the periodic orbits of the Hamiltonian flow on the hypersurface $\Sigma$:
$$
d\mathcal{A}^{H}_{(v,\eta)} = 0 \qquad \iff \qquad \left\lbrace\begin{array}{c}
                                                                           \partial_{t}v=\eta X^{H}(v)\\
                                                                            v(t)\in \Sigma\quad \forall\ t \in S^{1}
                                                                            \end{array}\right.
$$

The generators of the chain complex of Rabinowitz Floer homology are in the critical set of $\mathcal{A}^{H}$. Notice that the critical points of $\mathcal{A}^{H}$ are never isolated since $\Sigma\times\{0\} \subseteq \Crit(\mathcal{A}^{H})$, hence the functional is never Morse. In order to construct the homology we will require the Rabinowitz action functional to be Morse-Bott generically, i.e., always after a small perturbation of the Hamiltonian. In the case of a compact hypersurface $\Sigma$ this property is satisfied, as shown in Appendix B of \cite{CieliebakFrauenfelder2009}. On the other hand, if $\Sigma$ is non-compact, ensuring the Morse-Bott property of $\mathcal{A}^{H}$ poses a much more difficult challenge.

In order to define the boundary operator, one first has to endow $C^{\infty}(S^{1},M)$ with the $L^{2}$-Riemannian structure induced by an $\omega$-compatible $2$-parameter family of almost complex structures $\{J_{t}(\cdot,\eta)\}_{(t,\eta)\in S^{1}\times\mathbb{R}}$ on $M$.\footnote{Actually, in \cite{CieliebakFrauenfelder2009} Cieliebak and Frauenfelder work with only $t$-dependent families of almost complex structures and the dependence on $\eta$ was introduced later by Abbondandolo and Merry in \cite{AbbonMerry2014} to prove transversality explicitly.} Then we can calculate the gradient of the Rabinowitz action functional with respect to that metric:
$$
\nabla^{J} \mathcal{A}^{H}(v,\eta)=\left( \begin{array}{c}
-J_{t}(v(t),\eta)(\partial_{t}v-\eta X^{H}(v)) \\
-\int H(v) \end{array} \right),
$$
and analyze the corresponding gradient flow lines of $\mathcal{A}^{H}$. By gradient flow lines of $\mathcal{A}^{H}$ or \emph{Floer trajectories}, we mean maps $u(s,t)=(v(s,t),\eta(s)))$ from the cylinder $\mathbb{R} \times S^{1}$, endowed with coordinates $(s, t)$, to $M\times \mathbb{R}$, which are solutions to the \emph{Rabinowitz-Floer equations}
$$
\left\lbrace\begin{array}{c}
\partial_{s}v+J_{t}(v(t),\eta)(\partial_{t}v-\eta X^{H}(v))=0,\\
\partial_{s}\eta + \int H(v)=0
\end{array}\right.
$$ 

The boundary operator is defined by counting the solutions of Rabinowitz-Floer equations between distinct critical points of $\mathcal{A}^{H}$. As we stated before, the critical points of $\mathcal{A}^{H}$ are never isolated, but if we require the action functional to be Morse-Bott, we can choose an auxiliary Morse function $f$ on $\Crit(\mathcal{A}^{H})$ and consider so called \emph{flow lines with cascades}, which consists of Floer and Morse trajectories. The generators of the chain complex for Rabinowitz Floer homology are the critical points of $f$ with grading given by the sum of the Conley-Zehnder index and the signature index, whereas the boundary operator is defined by counting the flow lines with cascades between critical points of index difference $1$. However, to ensure that the boundary operator $\partial$ is well defined and $\partial^{2}=0$, one first has to prove that the moduli spaces of Floer trajectories are compact. 
In the case of a compact hypersurface $\Sigma$, $L^{\infty}$ bounds for Floer trajectories corresponding to Hamiltonians which are constant outside a compact set have been established in \cite{CieliebakFrauenfelder2009}, whereas a proof of boundedness of Floer trajectories corresponding to Hamiltonians which are radial outside a compact set can be found in \cite{Abbon2009}. The main challenge in the compact case is to prove boundedness of the Lagrange multiplier $\eta$. However, in the case of a non-compact hypersurface $\Sigma$, proving $L^{\infty}$ bounds on the loop $v$ is harder: because of the non-compactness of $\Sigma$, in fact, one cannot apply standard Floer theoretic arguments.


The main result of this paper is presented in Theorem \ref{twr:ModuliCompact}, where we establish $L^{\infty}$ bounds for Floer trajectories corresponding to a class of Hamiltonians with non-compact regular level sets: notice that these bounds represent one of the most crucial steps towards defining Rabinowitz Floer homology for non-compact hypersurfaces. Because of the additional challenges posed by considering non-compact hypersurfaces, in this paper we restrict ourselves to hypersurfaces in the standard symplectic manifold $(\mathbb{R}^{2n},\omega_{0})$ and require the Hamiltonians to satisfy suitable analytic conditions, which are introduced in section \ref{ssec:CondHam}. 

The novel approach introduced here is the method to bound the Floer trajectories along the non-compact hypersurface $\Sigma$. In Section \ref{sec:critMfld}, we analyze the behavior of Floer trajectories in the neighborhood of the non-compact component of the critical set of the Rabinowitz action functional, $\Sigma\times\{0\}\subseteq \Crit(\mathcal{A}^{H})$. Due to the Morse-Bott property of the action functional the Floer trajectories near $\Sigma\times\{0\}$ are transverse to the hypersurface. In other words, in the neighborhood of $\Sigma\times\{0\}$ the tangential component of the Floer trajectories can be bounded by the energy growth and as a result the Floer trajectories cannot escape along $\Sigma\times\{0\}$.

In order to achieve the $L^{\infty}$ bounds for the Floer trajectories we apply the Aleksandrov maximum principle: this principle has already been applied in the study of Rabinowitz Floer Homology, for example in \cite{Abbon2009}, but the original contribution of our work is that we use it in a non standard setting, where the level sets of the coercive, plurisubharmonic function intersect the level sets of our Hamiltonian.

Finally, in this paper we also introduce a very special, but also natural class of Hamiltonians, which we call \emph{tentacular} Hamiltonians: these satisfy the assumptions necessary to obtain the appropriate bounds on the moduli spaces of Floer trajectories. In fact, the conditions in the definition are even more restrictive and we claim that for this class one can actually define Rabinowitz Floer homology in such a way that it is invariant under sufficiently small compactly supported perturbations. The proof of this will be the content of a second paper.



\section{The setting}
\subsection{Moduli spaces of Floer trajectories}
Recall that in the introduction we already encountered Floer trajectories: they are the solutions of the gradient-like Rabinowitz-Floer equations. To define them precisely, we first have to equip the loop space with a metric, which comes a family of $\omega$-compatible, almost complex structures.

Let $J=\{J_{t}(\cdot, \eta)\}_{(t,\eta)\in S^{1}\times \mathbb{R}}$ be a smooth $2$-parameter family of $\omega$-compatible, almost complex structures. For compactness reasons we additionally require
$$
\sup_{(t,\eta)\in S^{1}\times \mathbb{R}}\|J_{t}(\cdot,\eta)\|_{C^{l}}<+\infty, \qquad \forall\ l\in \mathbb{N}.
$$
The set of such $2$-parameter families of $\omega$-compatible almost complex structures we denote by $\mathscr{J}(M,\omega)$. The subset of $\mathscr{J}(M,\omega)$ consisting of $2$-parameter families of $\omega$-compatible, almost complex structures fixed outside an open set $\mathcal{V} \subseteq M\times \mathbb{R}$ we denote by 
\begin{equation}
\mathscr{J}(M,\omega,\mathcal{V}).
\label{eqn:JMoV}
\end{equation}

The notion of Floer trajectories is basic in defining the boundary operator in the Rabinowitz Floer homology. However, to prove that the homology does not depend on the choice of the almost complex structure used in defining Floer trajectories, one has to introduce a notion of Floer trajectory for a homotopy of Hamiltonians and almost complex structures. Let $\{H_{s}\}_{s\in \mathbb{R}}$ be a smooth homotopy of Hamiltonians and $\{J_{s}\}_{s\in\mathbb{R}}$ be a homotopy of almost complex structures, such that for all $s\in \mathbb{R},\ J_{s} \in \mathscr{J}(M,\omega)$ and the homotopy is constant outside the interval $[0,1]$, namely
\begin{equation}
(H_{s},J_{s}) = \left\lbrace\begin{array}{l l}
(H_{0}, J_{0}) & s\leq 0,\\
(H_{1}, J_{1}) & s \geq 1.
\end{array}\right.
\label{eqn:Gamma1}
\end{equation}
Let us denote $\Gamma=\{(H_{s},J_{s})\}_{s\in \mathbb{R}}$ the associated homotopy and abbreviate
$$
\|\partial_{s}H_{s}\|_{L^{\infty}}:=\max_{\substack{x\in M,\\ s\in [0,1]}}\|\partial_{s}H_{s}(x)\|.
$$
The moduli spaces corresponding to $1$-parameter families of Hamiltonians and almost complex structures are defined as follows:
\begin{define}
Let $(M,\omega)$ be a symplectic manifold and let $\Gamma=\{(H_{s},J_{s})\}_{s\in \mathbb{R}}$ be a homotopy of Hamiltonians and almost complex structures as in (\ref{eqn:Gamma1}). A \textbf{Floer trajectory} is a solution $u\in C^{\infty}(\mathbb{R}\times S^{1}, M\times \mathbb{R})$, $u(s) = (v(s),\eta(s))$ to the equation
\begin{equation}
\partial_{s} u = \nabla^{J_{s}}\mathcal{A}^{H_{s}}(u)=\left(\begin{array}{c}
-J_{s,t}(v,\eta)(\partial_{t}v-X^{H_{s}}(v))\\
-\int_{S^{1}}H_{s}(v)dt
\end{array}\right) \label{eqn:Gamma2}.
\end{equation}
For a pair $(\Lambda_{0},\Lambda_{1})$ of connected components $\Lambda_{i} \subseteq \Crit(\mathcal{A}^{H_{i}})$ the set of all Floer trajectories converging in the limits to $\Lambda_{0}$ and $\Lambda_{1}$
$$
\lim_{s\to  -\infty}u(s)\in \Lambda_{0}, \qquad \lim_{s\to  \infty}u(s)\in \Lambda_{1},
$$
is called a \textbf{moduli space of homotopy $\Gamma$} and denoted by $\mathscr{M}^{\Gamma}(\Lambda_{0},\Lambda_{1})$.
\label{def:modGam}
\end{define}

Our aim is to prove bounds in more generality on the moduli spaces of Floer trajectories corresponding to $1$-parameter families of Hamiltonians and almost complex structures. 
However, since in the parametric case the action along the Floer trajectories may not be monotonically increasing, we need to introduce the Novikov finiteness condition, which ensures that we have a bound on how much the action can decrease along a Floer trajectory.
\begin{define}
Let $\Gamma=\{(H_{s},J_{s})\}_{s\in \mathbb{R}}$ be a smooth homotopy of Hamiltonians and almost complex structures constant outside $[0,1]$. 
For a pair $a,b\in\mathbb{R}$ denote
\begin{align}
A(\Gamma,a,b) &:=\inf \left\lbrace\begin{array}{r | l}
&\exists\  \Lambda_{i}\subseteq \Crit(\mathcal{A}^{H_{i}})\quad i=0,1,\\
A \in [(-\infty,b] & \mathcal{A}^{H}(\Lambda_{0}) \geq a,\ \mathcal{A}^{H_{1}}(\Lambda_{1})=A,\\
& \mathscr{M}^{\Gamma}(\Lambda_{0},\Lambda_{1})\neq \emptyset
\end{array}\right\rbrace,\label{eqn:Aab}\\
B(\Gamma,a,b) &:=\sup \left\lbrace\begin{array}{r | l}
& \exists\ \Lambda_{i}\subseteq \Crit(\mathcal{A}^{H_{i}})\quad i=0,1,\\
B \in [a,+\infty) & \mathcal{A}^{H}(\Lambda_{0}) = B,\ \mathcal{A}^{H_{1}}(\Lambda_{1})\leq b,\\
& \mathscr{M}^{\Gamma} (\Lambda_{0},\Lambda_{1})\neq \emptyset
\end{array}\right\rbrace. \label{eqn:Bab}
\end{align}
We say that the homotopy $\Gamma$ satisfies the \textbf{Novikov finiteness condition} if for each pair $(a,b)\in CritVal(\mathcal{A}^{H_{0}})\times CritVal(\mathcal{A}^{H_{1}})$ the corresponding $A(\Gamma,a,b)$ and $B(\Gamma,a,b)$ are finite (whenever defined). 
\label{def:Nov}
\end{define}
\subsection{Conditions on the Hamiltonians}
\label{ssec:CondHam}
Given that we do not assume compactness of the energy levels, we will need to introduce additional conditions on the Hamiltonians we consider. Moreover, we will restrict ourselves in this paper to the standard symplectic manifold $(\mathbb{R}^{2n},\omega_{0})$ with the canonical symplectic form. We will be interested in smooth Hamiltonian functions $H$ on $(\mathbb{R}^{2n},\omega_{0})$ satisfying the following properties:
\begin{description}\label{def:H}
\item[\textbf{H1}\namedlabel{item:H1}{H1}]  There exists a global Liouville vector field $X^{\dagger}$ and constants $c_{1},c_{2}>0$, $ c_{3}\geq 0$ such that for all $x\in \mathbb{R}^{2n}$ the following holds true
$$
\|X^{\dagger}_{x}\| \leq  c_{1}(\|x\|+1), \qquad
dH_{x}(X^{\dagger}) \geq  c_{2}\|x\|^{2}-c_{3}.
$$
\item[\textbf{H2}\namedlabel{item:H2}{H2}] The function $H$ grows at most quadratically at infinity, that is there exists a constant $L\geq 0$ such that
$$
\sup_{x\in \mathbb{R}^{2n}} \|D^{3}H_{x} \| \cdot \|x\| \leq  L <+\infty.
$$
\item[\textbf{H3}\namedlabel{item:H3}{H3}] There exist constants $c_{4},c_{5},\nu>0$ and a Liouville vector field $X^{\ddagger}$ defined on $H^{-1}((-\nu,\nu))$, such that
\begin{align*}
 \|X^{\ddagger}(x)\| & \leq c_{4}(\|x\|+1) \qquad \forall\ x\in H^{-1}((-\nu,\nu)),\\
& \inf_{H^{-1}((-\nu,\nu))}dH(X^{\ddagger}) \geq c_{5} >0.
\end{align*}
\end{description}

We will call such Hamiltonians \textbf{admissible}.

Let us analyze the consequences of the properties of admissible Hamiltonians. 
First note that Property (\ref{item:H1}) implies linear growth of the gradient of $H$
\begin{equation}\label{eqn:nablaH2}
\|\nabla H(x)\| \geq \frac{c_{2}}{c_{1}}(\|x\|-h'_{1}) \qquad \forall\ x\in\mathbb{R}^{2n}, \qquad
\textrm{where} \qquad h'_{1} =1+\frac{|c_{2}-c_{3}|}{c_{2}}.
\end{equation}
In a similar way Property (\ref{item:H2}) implies quadratic behavior of the Hamiltonian for all $x\in\mathbb{R}^{2n}$ we have
$$
\|\Hess_{x}H\| \leq M,\qquad \|\nabla H(x)\|  \leq h_{1}+M\|x\|,\qquad |H(x)|  \leq H+h_{1}\|x\|+\frac{1}{2}M\|x\|^{2},
$$
where $M:=\|\Hess_{0}H\|+L$, $H: = |H(0)|$, $h_{1}:=\|\nabla H(0)\|$.

At last, observe that Property (\ref{item:H3}) ensures that $0$ is a regular value of $H$ and $H^{-1}(0)$ is of contact type.

The following property will be very important for us: given an admissible Hamiltonian $\textsf{H}$ and a compact set $K\subset \mathbb{R}^{2n}$ we can find an open neighborhood of $\textsf{H}$ in the affine space $\textsf{H}+C^{\infty}_{0}(K)$ consisting of admissible Hamiltonians, which satisfy all the properties of Definition \ref{def:H} with one set of parameters chosen uniformly for the whole neighborhood.

\begin{lem}
Let $\textsf{H}:\mathbb{R}^{2n}\to \mathbb{R}$ be an admissible Hamiltonian. Then for any compact set $K\subseteq \mathbb{R}^{2n},\ K\neq \emptyset$, there exists an open subset $\mathscr{O}(\textsf{H})\subseteq C^{\infty}_{0}(K)$, such that all the Hamiltonians from the set
$$
\textsf{H} + \mathscr{O}(\textsf{H}):=\{\textsf{H}+h\ |\ h\in \mathscr{O}(\textsf{H})\}
$$
are admissible and the set of parameters can be chosen uniformly for the whole set $\textsf{H} + \mathscr{O}(\textsf{H})$.
\label{lem:open}
\end{lem}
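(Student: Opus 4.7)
The strategy is to exploit the fact that any $h\in C^\infty_0(K)$ vanishes off the compact set $K$, so the perturbed Hamiltonian $\tilde{\textsf{H}}:=\textsf{H}+h$ agrees with $\textsf{H}$ on $\mathbb{R}^{2n}\setminus K$. Hence the asymptotic bounds in \ref{item:H1}--\ref{item:H3} are only affected through the values of $h$ and its derivatives on $K$, and on this compact set, smallness of $h$ in a suitable $C^k$-norm is enough to absorb the perturbation into a slight worsening of the constants. I would therefore define
\[
\mathscr{O}(\textsf{H}):=\bigl\{\,h\in C^\infty_0(K)\ :\ \|h\|_{C^3(K)}<\delta\,\bigr\}
\]
for a single $\delta>0$, to be fixed small enough that all three conditions go through with a uniform set of parameters; openness in the LF topology of $C^\infty_0(K)$ is automatic.

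For \ref{item:H1}, I would keep the same Liouville field $X^\dagger$ and original constants $c_1,c_2$: the first estimate only involves $X^\dagger$, while for $x\in K$
\[
d\tilde{\textsf{H}}_x(X^\dagger)\ \ge\ c_2\|x\|^2-c_3-\|h\|_{C^1(K)}\sup_{y\in K}\|X^\dagger_y\|,
\]
and $\sup_{y\in K}\|X^\dagger_y\|\le c_1(\sup_{x\in K}\|x\|+1)<\infty$ by \ref{item:H1} applied to $\textsf{H}$ itself. Hence \ref{item:H1} holds globally with the enlarged constant $c_3':=c_3+\delta\,c_1(\sup_{x\in K}\|x\|+1)$. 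For \ref{item:H2}, $\|D^3\tilde{\textsf{H}}_x\|\cdot\|x\|$ equals $\|D^3\textsf{H}_x\|\cdot\|x\|$ for $x\notin K$, and is at most $L+\delta\sup_{x\in K}\|x\|$ on $K$, yielding the uniform constant $L':=L+\delta\sup_{x\in K}\|x\|$.

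The main obstacle is \ref{item:H3}, since $X^\ddagger$ is a priori only defined on the slab $\textsf{H}^{-1}((-\nu,\nu))$, and the perturbation may push the level sets of $\tilde{\textsf{H}}$ out of this slab. To remedy this I would shrink the slab: if $\|h\|_{C^0(K)}<\delta<\nu/2$, then
\[
\tilde{\textsf{H}}^{-1}\bigl((-\nu',\nu')\bigr)\ \subseteq\ \textsf{H}^{-1}\bigl((-\nu,\nu)\bigr),\qquad \nu':=\nu-\delta,
\]
so $X^\ddagger$ is still defined on the new slab and the pointwise bound $\|X^\ddagger\|\le c_4(\|x\|^2+1)$ is inherited unchanged. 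On the compact set $\overline{K\cap \textsf{H}^{-1}([-\nu/2,\nu/2])}$ the continuous field $X^\ddagger$ is bounded by some $M_2=M_2(\textsf{H},K)$, so that
\[
d\tilde{\textsf{H}}(X^\ddagger)\ \ge\ c_5-\|h\|_{C^1(K)}\,M_2\ \ge\ c_5/2
\]
on $\tilde{\textsf{H}}^{-1}((-\nu',\nu'))\cap K$ provided $\delta<c_5/(2M_2)$, while off $K$ the original estimate with constant $c_5$ is preserved. Choosing $\delta$ to meet all the preceding smallness requirements simultaneously produces the desired neighborhood $\mathscr{O}(\textsf{H})$, on which the admissibility parameters $(c_1,c_2,c_3',L',c_4,c_5/2,\nu')$ are uniform.
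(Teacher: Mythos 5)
Your proof is correct and follows essentially the same route as the paper: perturb within a $C^3$-ball in $C^\infty_0(K)$, absorb the perturbation into slightly worse constants for (\ref{item:H1}) and (\ref{item:H2}), and for (\ref{item:H3}) shrink the slab so that $(\textsf{H}+h)^{-1}$ of the smaller interval sits inside $\textsf{H}^{-1}((-\nu,\nu))$ where $X^{\ddagger}$ is defined. The only (cosmetic) slip is that your bound $M_2$ should be taken over $K\cap\textsf{H}^{-1}((-\nu,\nu))$ rather than $K\cap\textsf{H}^{-1}([-\nu/2,\nu/2])$, since the perturbed slab only lands in the former; the pointwise bound $\|X^{\ddagger}\|\leq c_{4}(\|x\|^{2}+1)$ gives $M_2\leq c_{4}(\sup_{K}\|x\|^{2}+1)$ there, which is exactly the constant the paper uses.
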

\begin{proof}
Let $c_{1},c_{2},c_{3},c_{4},c_{5}, \nu$ be the constants and $X^{\dagger},X^{\ddagger}$ be the Liouville vector fields associated to $\textsf{H}$ through Properties (\ref{item:H1}), (\ref{item:H2}) and (\ref{item:H3}). Let $h\in C^{\infty}_{0}(K)$. Then from Property (\ref{item:H1}) of $\textsf{H}$ one has
\begin{align*}
 d(\textsf{H}+h)_{x}(X^{\dagger})  & = d(\textsf{H})_{x}(X^{\dagger})+dh_{x}(X^{\dagger}), \\
  &  \geq c_{2}\|x\|^{2}-c_{3} - \|h\|_{C^{1}(K)}\sup_{x\in K}\|X^{\dagger}\| ,\\
  & \geq c_{2}\|x\|^{2}-c_{3} - \|h\|_{C^{1}(K)}c_{1}(\sup_{x\in K}\|x\|+1).
\end{align*}
On the other hand, from Property (\ref{item:H2}) of $\textsf{H}$ one obtains
\begin{align*}
  \sup_{x\in\mathbb{R}^{2n}}\|D^{3}(\textsf{H}+h)\|\ \|x\| &\leq \|h\|_{C^{3}(K)} \sup_{x\in K}\|x\|+\sup_{x\in \mathbb{R}^{2n}}\|D^{3}(\textsf{H})\|\ \|x\|.
\end{align*}

Now let us analyze Property (\ref{item:H3}). Let $c_{4},c_{5},\nu$ be the constants and $X^{\ddagger}$ is the Liouville vector field associated to $\textsf{H}$ by Property (\ref{item:H3}).

Observe that for every $x\in \mathbb{R}^{2n}$ and $h\in C^{\infty}_{0}(K)$ one has
$$
|\textsf{H}(x)| \leq |(\textsf{H}+h)(x)|+|h(x)| \leq |(\textsf{H}+h)(x)|+\|h\|_{C(K)}.
$$
That means that whenever $\|h\|_{C(K)}<\frac{1}{2}\nu$, then we have the following inclusion
$$
(\textsf{H}+h)^{-1}\left(-\frac{1}{2}\nu,\frac{1}{2}\nu\right)\subseteq \textsf{H}^{-1}(-\nu,\nu).
$$
By property (\ref{item:H3}) we have that for $x\in (\textsf{H}+h)^{-1}(-\frac{1}{2}\nu,\frac{1}{2}\nu)$ we have
\begin{align*}
d(\textsf{H}+h)_{x}(X^{\ddagger}) & \geq d(\textsf{H})_{x}(X^{\ddagger})+dh_{x}(X^{\ddagger})\\
& \geq d(\textsf{H})_{x}(X^{\ddagger})+dh_{x}(X^{\ddagger})\\
& \geq  \inf_{\textsf{H}^{-1}((-\nu,\nu))}d(\textsf{H})_{x}(X^{\ddagger})-\|h\|_{C^{1}(K)}\sup_{K\cap \textsf{H}^{-1}(-\nu,\nu)}\|X^{\ddagger}\|\\
& = c_{5}-c_{4}\|h\|_{C^{1}(K)}(\sup_{K}\|x\|+1).
\end{align*}
Now if we denote 
\begin{equation}
\theta:=\frac{1}{2}\min\left\lbrace \nu,\frac{c_{5} }{c_{4}(\sup_{K}\|x\|+1)}\right\rbrace, \label{eqn:theta}
\end{equation}
and 
$$
\mathscr{O}(\textsf{H}):=\{h \in C^{\infty}_{0}\ |\ \|h\|_{C^{3}(K)}<\theta\},
$$
then for all $h\in \mathscr{O}(\textsf{H})$, we will have
\begin{align*}
 d(\textsf{H}+h)_{x}(X^{\dagger})  &  \geq c_{2}\|x\|^{2}-c_{3} - \theta \sup_{x\in K}\|X^{\dagger}\|\qquad \forall x\in \mathbb{R}^{2n},\\
  \sup_{x\in\mathbb{R}^{2n}}\|D^{3}(\textsf{H}+h)\|\ \|x\| &\leq \theta \sup_{x\in K}\|x\|+\sup_{x\in \mathbb{R}^{2n}}\|D^{3}(\textsf{H})\|\ \|x\|, \\
\end{align*}
and for all $x\in (\textsf{H}+h)^{-1}(-\frac{1}{2}\nu,\frac{1}{2}\nu)$ we have
$$
d(\textsf{H}+h)_{x}(X^{\ddagger}) \geq c_{5}-c_{4}\|h\|_{C^{1}(K)}(\sup_{K}\|x\|+1) \geq \frac{1}{2}c_{5}>0.
$$
In particular, we can choose the parameters uniformly for the whole family. In other words, the set $\textsf{H}+\mathscr{O}(\textsf{H})$ consists of admissible Hamiltonians and the properties (\ref{item:H1}), (\ref{item:H2}), and (\ref{item:H3}) are satisfied with parameters, which depend only on $\textsf{H}$ and $K$.

Moreover, for all $H\in \textsf{H}+\mathscr{O}(\textsf{H})$ we have uniform quadratic behavior: that is, $\forall\ x\in\mathbb{R}^{2n}$, 
\begin{align}
\|\Hess_{x}H\| &\leq M, \label{eqn:HessH}\\
\|\nabla H(x)\| & \leq h_{1}+M\|x\|,\label{eqn:nablaH}\\
| H(x)| & \leq h_{0}+h_{1}\|x\|+\frac{1}{2} M\|x\|^{2}, \label{eqn:H0}\\
\textrm{where} \quad M:= \theta +\|\Hess_{0}\textsf{H}\| &+ L, \quad h_{0}:=\theta +|\textsf{H}(0)|, \quad h_{1}:=  \theta+\|\nabla \textsf{H}(0)\|.\nonumber
\end{align}
\end{proof}

In order to formulate the main theorem of this paper, we will need to introduce one more assumption. In particular, we can formulate this assumption in a more general setting, of any exact, symplectic manifold $(M,\omega)$, not necessarily $(\mathbb{R}^{2n},\omega_{0})$. Let $H$ be a Hamiltonian on an exact, symplectic manifold $(M,\omega)$, having $0$ as a regular value. We say that $H$ satisfies
\begin{description}
\item[\textbf{PO}\namedlabel{item:PO}{PO}] if for any fixed action window all the non-degenerate periodic orbits are contained in a compact subset of $M$. In other words, for any $n\in \mathbb{N}$ there exists a compact subset $K_{n}\subseteq M$, such that whenever 
$$
(v,\eta)\in \Crit(\mathcal{A}^{H})\quad \textrm{and} \quad  0<|\mathcal{A}^{H}(v,\eta)|\leq n,
$$
then $v(S^{1})\subseteq K_{n}$. 
\end{description}
Note that, this is a very natural assumption, which assures that all the Floer trajectories from an action window, start or end in a compact set. In fact, as we will show in the next subsection, (\ref{item:PO}) along with (\ref{item:H1}), (\ref{item:H2}) and (\ref{item:H3}) are sufficient to assure uniform bounds on the moduli space of Floer trajectories. However, property (\ref{item:PO}) does not persist under compact perturbations. Therefore, to assure openness of our properties, we will introduce an additional, even more restrictive condition, under which we claim one can actually define Rabinowitz Floer homology in such a way that it is invariant under sufficiently small compactly supported perturbations.

Let $H$ be a Hamiltonian on an exact, symplectic manifold $(M,\omega)$, having $0$ as a regular value. We say that property
\begin{description}
\item[\ref{item:PO} is \textbf{uniformly continuous} at $H$] if there exists an an open neighborhood $B(H)$ of $0$ in $C^{\infty}_{c}(M)$ and an exhaustion $\{K_{n}\}_{n\in\mathbb{N}}$ of $M$ by compact sets $K_{n}$, such that for every $n\in \mathbb{N}$ and every $h\in B(K_{n})=B(H)\cap C^{\infty}_{0}(K_{n})$, whenever
$$
(v,\eta)\in \Crit(\mathcal{A}^{H+h})\quad\textrm{and}\quad 0<|\mathcal{A}^{H+h}(v,\eta)|\leq n,
$$
then $v(S^{1})\subseteq K_{n}$. In other words, all the nondegenerate periodic orbits of the perturbed Hamiltonian within the action window $[-n,n]$, are contained in $K_{n}$. 
\end{description}
We will call a Hamiltonian $H$ on $(\mathbb{R}^{2n},\omega_{0})$ a \textbf{tentacular\footnote{The Floer trajectories as embedded cylinders escaping along the non-compact hypresurface to infinity reminded me of tentacles of a "moduli monster" thus inspiring the name \textit{tentacular Hamiltonians}. The other reason for the name was an assumption, we have made earlier, on the hypersurface to be topologically a compact set with cylindrical ends. However, later in our work we have discovered that this assumption could be omitted, yet the name \textit{tentacular Hamiltonians} stayed as we grew attached to it.} Hamiltonian}, whenever $H$ is admissible and (\ref{item:PO}) is uniformly continuous at $H$. Since by Lemma \ref{lem:open} the set of admissible Hamiltonians is open under compactly supported perturbations and uniform continuity of (\ref{item:PO}) is by definition an open property, the set of tentacular Hamiltonians is also open under compactly supported perturbations. This way for tentacular Hamiltonians we will not only be able to bound the Floer trajectories, but also to define the Rabinowitz Floer homology, which will be the subject of our followup paper.


\subsection{Main theorem and outline of the proof}
We can finally formulate the main theorem of this paper, namely the existence of uniform bounds on the moduli space of Floer trajectories. 
Since in future work we would also like to ensure that these bounds lead to the definition of a homology theory that is invariant under small perturbations, we will aim at proving them in more generality, namely for a homotopy of Hamiltonians and almost complex structures. 

We have proved in the previous section that for a fixed, admissible Hamiltonian $H_{0}:\mathbb{R}^{2n}\to \mathbb{R}$ and a compact set $K\subseteq \mathbb{R}^{2n},\ K\neq \emptyset$ by Lemma \ref{lem:open} there exists an open neighborhood $\mathscr{O}(H_{0})$ of $0$ in $C_{0}^{\infty}(K)$, such that all the Hamiltonians from $H_{0}+\mathscr{O}(H_{0})$ are also admissible and the set of parameters can be chosen uniformly for the whole family $H_{0}+\mathscr{O}(H_{0})$. As a result, the constants $\tilde{c}$ and $\varepsilon_{0}$ calculated in Lemma \ref{lem:tilde} can also be chosen uniformly for the whole family $H_{0}+\mathscr{O}(H_{0})$.

To formulate Theorem \ref{twr:ModuliCompact} we will introduce the following notation: for every compact subset $N\subseteq H^{-1}(0)$, denote
\begin{equation}
\mathscr{C}(\mathcal{A}^{H},N) := \{ (v,\eta) \in \Crit(\mathcal{A}^{H})\ |\ |\mathcal{A}^{H}(v,\eta)|>0\ \textrm{or}\ (v,\eta)\in N\times \{0\}\}.
\label{eqn:CAHY}
\end{equation}
Moreover, recall Definition \ref{def:modGam} of a moduli space of Floer trajectories for a homotopy of Hamiltonians and almost complex structures $\Gamma=\{H_{s},J_{s}\}_{s \in \mathbb{R}}$, which satisfies (\ref{eqn:Gamma1}) and the notation for the set of $2$-parameter families of $\omega$-compatible, almost complex structures fixed outside an open set as defined in (\ref{eqn:JMoV}). Now we are ready to present the main theorem of this paper:
\begin{twr}
\label{twr:ModuliCompact}
Fix an admissible Hamiltonian $\mathsf{H}:\mathbb{R}^{2n}\to \mathbb{R}$; a compact set $K\subseteq \mathbb{R}^{2n},\ K\neq \emptyset$; an open, precompact subset $\mathcal{V} \subseteq \mathbb{R}^{2n}$ and a constant \etat$>0$. Let $\mathscr{O}(\mathsf{H})\subseteq C^{\infty}_{0}(K)$ be the open set associated to $\mathsf{H}$ and $K$ as in Lemma \ref{lem:open} and let $\Gamma=\{H_{s},J_{s}\}_{s \in \mathbb{R}}$ be a smooth homotopy of Hamiltonians and $\omega$-compatible almost complex structures, constant outside of $[0,1]$ (i.e. $\Gamma$ satisfies (\ref{eqn:Gamma1})) and such that for all $s\in\mathbb{R}$
$$
H_{s} \in \mathsf{H}+\mathscr{O}(\mathsf{H}),\qquad
J_{s}=\{J_{t,s}(\cdot,\eta)\}_{S^{1}\times\mathbb{R}} \in \mathscr{J}\Big(\mathbb{R}^{2n},\omega_{0},\mathcal{V}\times\big((-\infty,-\textrm{\etat})\cup( \textrm{\etat}, \infty)\big)\Big).
$$
Assume $H_{0}$ satisfies Property (\ref{item:PO}). There exist constants $\tilde{c}<\infty$ and $\varepsilon_{0}>0$ depending only on $\mathsf{H}$ and $K$, such that if the homotopy $\Gamma$ satisfies
\begin{equation}
\Big(\tilde{c}+\frac{\|J\|_{L^{\infty}}^{\frac{3}{2}}}{\varepsilon_{0}}\Big)\|\partial_{s}H_{s}\|_{L^{\infty}} < \frac{1}{8},
\label{eqn:Hs2}
\end{equation}
and we fix $a,b\in \mathbb{R}$ and a compact subset $N\subseteq H^{-1}_{0}(0)$, then for each pair $(\Lambda_{0},\Lambda_{1})$ of connected components
\begin{equation}\label{eqn:lambda}
\Lambda_{0} \subseteq \mathscr{C}(\mathcal{A}^{H_{0}},N)\cap (\mathcal{A}^{H_{0}})^{-1}([a,\infty)), \qquad
\Lambda_{1} \subseteq \Crit(\mathcal{A}^{H_{1}})\cap (\mathcal{A}^{H_{1}})^{-1}((-\infty,b]).
\end{equation}
the associated moduli space $\mathscr{M}^{\Gamma}(\Lambda_{0},\Lambda_{1}) \subseteq C^{\infty}(\mathbb{R} \times S^{1},\mathbb{R}^{2n} \times \mathbb{R})$ is contained in a compact set in $\mathbb{R}^{2n+1}$.
\end{twr}

Note that if we take $\Gamma$ to be a constant homotopy, i.e $(H_{s},J_{s})=(H,J)$ for all $s \in \mathbb{R}$, where $H$ is admissible, whereas $J\in \mathscr{J}\Big(\mathbb{R}^{2n},\omega_{0},\mathcal{V}\times\big((-\infty,-\textrm{\etat})\cup( \textrm{\etat}, \infty)\big)\Big)$, then condition (\ref{eqn:Hs2}) is automatically satisfied and the assertions of the theorem hold true.

Secondly, you can observe that Theorem \ref{twr:ModuliCompact} is formulated in a very general way, for homotopies of Hamiltonians and almost complex structures contained in the sets $\mathsf{H}+\mathscr{O}(\mathsf{H})$ and $\mathscr{J}(\mathbb{R}^{2n},\omega_{0},\mathcal{V}\times((-\infty,-\textrm{\etat})\cup( \textrm{\etat}, \infty)))$ respectively. Moreover, throughout the proof we have explicitly calculated all the parameters, in particular the bounding constants. This may seem at first superfluous, but it was in fact done on purpose, in order to trace the dependence of the bounding constants on the parameters of the homotopy. If we now look more closely at the established parameters, we can see that the bounds do not depend on the choice of the homotopy, but only on the set $\mathsf{H}+\mathscr{O}(\mathsf{H})$ and on the limit of norms of the almost complex structures $\|J\|_{L^{\infty}}$ as in (\ref{eqn:Jinfty}). This way we can establish uniform bounds for continuous families of homotopies, so called \textit{homotopies of homotopies}, which are necessary to prove the invariance of the Rabinowitz Floer homology of perturbations in $H$ and of the choice of the almost complex structure. For a reader familiar with the Floer techniques this generality may seem unnecessary, since in the classical, compact setting one can deduce uniform bounds on a homotopy of homotopies using the isolating neighborhood theorem as presented in Theorem 3, \cite{Floer1989}. However, this technique cannot be applied directly in our setting, where the Rabinowitz action functional is Morse-Bott and its critical set has a non-compact component. This is due to the fact that an isolating neighborhood in the sense of Definition 1b.1 of \cite{Floer1989} does not exist for Floer trajectories crossing the non-compact component of the critical set of the Rabinowitz action functional along hypersurface $\Sigma\times\{0\}$. Unable to cite and apply the isolating neighborhood theorem as in \cite{Floer1989}, we had to establish all the uniform bounds explicitly.

The general outline of the proof Theorem 1 is as follows: the first step is to prove global bounds on the action, energy and $\eta$ parameter for the whole moduli space, which is explained in Section \ref{sec:ActBound}. This section also includes a proof that our system satisfies the Novikov finiteness condition.

The second step is to localize the Floer trajectory. In Section \ref{sec:ActDeriv} we  introduce the set of infinitesimal action derivation and divide it into two parts - a compact set and a tubular neighbourhood of the hypersurface $\Sigma\times \{0\}$. In Section \ref{sec:critMfld} we show how to obtain bounds on the $L^{2}\times\mathbb{R}$ norm in the tubular neighbourhood of the hypersurface, using the Morse-Bott property of the action functional. This way we are able to divide the loop space into three sets, on each one bounding the $L^{2}\times\mathbb{R}$ norm of a Floer trajectory differently.

Afterwards, we will analyze how the the Floer trajectory oscillates between those three sets and establish global $L^{2}\times\mathbb{R}$ bounds on the moduli spaces: this is explained in Section \ref{sec:oscillations}.

Finally, in Section \ref{sec:MaxPrinc}, we will establish the $L^{\infty}\times \mathbb{R}$ bounds, using Aleksandrov's maximum principle.


\section{Bounds on the action}
\label{sec:ActBound}

In this section we establish uniform bounds on action, energy and $\eta$ parameter along perturbed Floer trajectories corresponding to admissible Hamiltonians and we prove that the Novikov finiteness condition holds for homotopies as in the statement of Theorem \ref{twr:ModuliCompact}.

We first prove that if we fix a pair of connected components $\Lambda_{0},\Lambda_{1}$ as in (\ref{eqn:lambda}) then the corresponding moduli space $\mathscr{M}^{\Gamma}(\Lambda_{0},\Lambda_{1})$ admits a global bound on the action, energy and $\eta$ parameter. 
In the case of a constant family of Hamiltonians $H_{s}=H$ the action and the energy of $u\in \mathscr{M}^{\Gamma}(\Lambda_{0},\Lambda_{1})$ are trivially bounded namely
$$
\frac{d}{ds} (\mathcal{A}^{H} \circ u)(s) = \|\partial_{s}u(s)\|_{L^{2}\times \mathbb{R}}^{2}\geq 0,
$$
and therefore we have the relations
$$
 a \leq \mathcal{A}^{H}(u(s))  \leq b,\qquad
 \textrm{and} \qquad \|\nabla \mathcal{A}^{H} (u)\|_{L^{2}(\mathbb{R}\times S^{1})}^{2}  =\|\partial_{s}u\|_{L^{2}(\mathbb{R}\times S^{1})}^{2}  \leq b-a.
$$
However, in the case of a non-constant family $\{H_{s}\}_{s\in \mathbb{R}}$
$$
\frac{d}{ds} (\mathcal{A}^{H_{s}} \circ u) (s)=  \|\nabla^{J_{s}} \mathcal{A}^{H_{s}}(u(s))\|^{2}_{g_{J_{s}}} + \eta(s)\int_{0}^{1}\partial_{s}H_{s}(v(s,t))dt,
$$
which makes it more difficult to prove the bounds on the energy and the action. Nevertheless, we have obtained the bounds by following the approach presented in \cite{CieliebakFrauenfelder2009}, i.e. by first proving in Lemma \ref{lem:tilde} that all Hamiltonians in $H_{0}+\mathscr{O}(H_{0})$ satisfy a linearity condition between the action and the $\eta$ parameter
$$
|\eta|\leq \tilde{c}(|\mathcal{A}^{H}(v,\eta)|+1),
$$
and then in Proposition \ref{prop:bounds1} obtaining the bounds on the action, energy and also $\eta$ parameter uniform for the whole $\mathscr{M}^{\Gamma}(\Lambda_{0},\Lambda_{1})$  provided the homotopy $\Gamma=\{(H_{s},J_{s})\}_{s\in\mathbb{R}}$ satisfies inequality (\ref{eqn:Hs2}).

We start by proving some inequalities which hold for admissible Hamiltonians. The following observation will be important in the proof of the lemmas and the proposition below. Let $Y$ be a Liouville vector field, then the Rabinowitz action functional satisfies:
\begin{align}
\mathcal{A}^{H}(v,\eta)-\mathcal{A'}^{H}_{(v,\eta)}(Y, \eta) & = 
\int \lambda(\partial_{t}v)-\eta\int H(v)
-\Big(\int \omega_{0}(Y,\partial_{t}v-\eta X^{H}) - \eta\int H(v)\Big) \nonumber\\
& =  \int \lambda(\partial_{t}v) - \int \lambda(\partial_{t}v-\eta X^{H}) \nonumber\\
& =  \eta \int \lambda(X^{H}) \nonumber \\
& =  \eta \int \omega_{0}(Y,X^{H}) \nonumber \\
& =  \eta \int dH(Y). \label{eqn:dH(Y)}
\end{align}

In this section we fix an admissible Hamiltonian $\mathsf{H}$ and a compact set $K\subseteq \mathbb{R}^{2n},\ K\neq \emptyset$. We denote by $\mathscr{O}(\mathsf{H})\subseteq C^{\infty}_{0}(K)$ the open set given by Lemma \ref{lem:open}.

\begin{lem}
There exists $\varepsilon_{0}>0$ and $\tilde{c}>0$, depending only on $\textsf{H}$ and $K$, such that for every $H \in \mathsf{H}+\mathscr{O}(\mathsf{H})$ whenever $(v,\eta)\in C^{\infty}(S^{1},\mathbb{R}^{2n})\times \mathbb{R}$ satisfies
$$
\|\nabla \mathcal{A}^{H}(v,\eta)\|_{L^{2}\times\mathbb{R}}<\varepsilon_{0},
$$
then
$$
|\eta|\leq \tilde{c}(|\mathcal{A}^{H}(v,\eta)|+1).
$$
\label{lem:tilde}
\end{lem}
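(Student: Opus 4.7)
The key tool is the identity obtained in~(\ref{eqn:dH(Y)}): for any Liouville vector field $Y$ defined on a neighborhood of the image of $v$,
$$
\eta\!\int_{S^{1}}\!dH\bigl(Y(v(t))\bigr)\,dt \;=\; \mathcal{A}^{H}(v,\eta) - d\mathcal{A}^{H}_{(v,\eta)}(Y,\eta),
$$
combined with the Cauchy--Schwarz bound $|d\mathcal{A}^{H}_{(v,\eta)}(Y,\eta)|\leq \|\nabla\mathcal{A}^{H}(v,\eta)\|_{L^{2}\times\mathbb{R}}\cdot\bigl(\|Y(v)\|_{L^{2}}^{2}+\eta^{2}\bigr)^{1/2}$. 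The plan is to apply this identity with the two Liouville fields available---the global field $X^{\dagger}$ from~(\ref{item:H1}) and the local field $X^{\ddagger}$ from~(\ref{item:H3})---after splitting into cases according to the size of $\|v\|_{L^{2}}$. By Lemma~\ref{lem:open} all constants entering the estimates can be chosen uniformly on $\mathsf{H}+\mathscr{O}(\mathsf{H})$, hence depend only on $\mathsf{H}$ and $K$.

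For the regime $\|v\|_{L^{2}}^{2}\geq R^{2}$, the plan is to apply the identity with $Y=X^{\dagger}$ and invoke~(\ref{item:H1}): the lower bound $\int dH(X^{\dagger}(v))\,dt\geq c_{2}\|v\|_{L^{2}}^{2}-c_{3}'$ together with the growth estimate $\|X^{\dagger}(v)\|_{L^{2}}\leq\sqrt{2}\,c_{1}(\|v\|_{L^{2}}+1)$ will yield, after distinguishing signs of $\eta$ and writing $\varepsilon=\|\nabla\mathcal{A}^{H}(v,\eta)\|<\varepsilon_{0}$,
$$
|\eta|\bigl(c_{2}\|v\|_{L^{2}}^{2}-c_{3}'-\varepsilon\bigr)\;\leq\; |\mathcal{A}^{H}(v,\eta)| + \sqrt{2}\,c_{1}\varepsilon\,(\|v\|_{L^{2}}+1).
$$
For $R$ depending only on $c_{2},c_{3}',\varepsilon_{0}$, the bracket exceeds $\tfrac{c_{2}}{2}\|v\|_{L^{2}}^{2}$, and dividing through extracts $|\eta|\leq\tilde{c}(|\mathcal{A}^{H}|+1)$ at once.

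The complementary regime $\|v\|_{L^{2}}^{2}<R^{2}$ is more delicate; here I will apply the same identity with $Y=X^{\ddagger}$, which requires $v(S^{1})\subseteq H^{-1}((-\nu,\nu))$. The idea is to exploit the gradient bound $\bigl|\!\int H(v)\,dt\bigr|<\varepsilon_{0}$ (second coordinate of $\nabla\mathcal{A}^{H}$), the oscillation estimate $\mathrm{osc}(H\circ v)\leq\|\nabla H(v)\|_{L^{2}}\|\partial_{t}v\|_{L^{2}}$, and $\|\partial_{t}v\|_{L^{2}}\leq\varepsilon_{0}+|\eta|\,\|X^{H}(v)\|_{L^{2}}$---with $\|X^{H}(v)\|_{L^{2}}$ controlled via~(\ref{item:H2})---to close a bootstrap that simultaneously (i) places $v(S^{1})$ inside $H^{-1}((-\nu,\nu))$, so that the $X^{\ddagger}$-identity applies with $dH(X^{\ddagger})\geq c_{5}>0$, and (ii) yields the bound $|\eta|\leq\tilde{c}(|\mathcal{A}^{H}|+1)$. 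Closing this bootstrap---in particular controlling $\|X^{\ddagger}(v)\|_{L^{2}}$, which grows quadratically in $\|v\|$, via the Sobolev embedding $H^{1}(S^{1})\hookrightarrow L^{\infty}(S^{1})$, and choosing $\varepsilon_{0}$ small enough (depending only on $\mathsf{H}$ and $K$) to make the iteration consistent---is the main technical obstacle. The constants $\tilde c$ and $\varepsilon_0$ produced in the two regimes are then combined by taking the maximum, respectively the minimum, which still depends only on $\mathsf{H}$ and $K$.
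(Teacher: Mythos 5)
Your overall strategy coincides with the paper's: the identity (\ref{eqn:dH(Y)}) is applied with $X^{\dagger}$ in the regime where $\|v\|_{L^{2}}$ is large and with $X^{\ddagger}$ in the complementary regime, and the first regime is handled exactly as you describe. The gap is in the second regime. To place $v(S^{1})$ inside $H^{-1}((-\nu,\nu))$ you propose an oscillation estimate routed through $\|\partial_{t}v\|_{L^{2}}\leq\varepsilon_{0}+|\eta|\,\|X^{H}(v)\|_{L^{2}}$. This re-introduces $|\eta|$ --- the very quantity you are trying to bound --- into the hypothesis needed before the $X^{\ddagger}$-identity becomes available, and no choice of $\varepsilon_{0}$ can make this ``iteration consistent'': if $|\eta|$ is a priori huge, the resulting oscillation bound $(h_{1}+M\textsl{v}_{0})\bigl(\varepsilon_{0}+|\eta|(h_{1}+M\textsl{v}_{0})\bigr)$ exceeds $\nu$ and the containment fails, so the bootstrap is genuinely circular as stated. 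The paper's resolution (Lemma \ref{lem:nbhd_close}) is the observation that $dH(\partial_{t}v)=\langle\nabla H(v),\partial_{t}v-\eta X^{H}(v)\rangle$, because $dH(X^{H})=\omega_{0}(X^{H},X^{H})=0$; hence
$$
|H(v(t))-H(v(t_{0}))|\leq\|\nabla H(v)\|_{L^{2}}\,\|\partial_{t}v-\eta X^{H}(v)\|_{L^{2}}\leq(h_{1}+M\textsl{v}_{0})\,\|\nabla\mathcal{A}^{H}(v,\eta)\|,
$$
which controls the oscillation of $H\circ v$ purely by the gradient of the action, with no $\eta$ anywhere. Combined with the existence of some $t_{0}$ with $|H(v(t_{0}))|<\tfrac{\nu}{2}$ (forced by the second component of $\nabla\mathcal{A}^{H}$ being smaller than $\varepsilon_{0}\leq\tfrac{\nu}{2}$), this gives $v(S^{1})\subseteq H^{-1}((-\nu,\nu))$ for $\varepsilon_{0}$ depending only on $\nu$, $M$, $h_{1}$, $\textsl{v}_{0}$, after which your $X^{\ddagger}$-argument closes without any iteration. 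Your remark about controlling $\|X^{\ddagger}(v)\|_{L^{2}}$ is not the issue (and is, if anything, more careful than the paper on that point); the issue is the $\eta$-dependence in your route to the containment.
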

\begin{proof}
We will show that the assertion holds true for
$$
\varepsilon_{0}:=\min\Big\{\frac{c_{5}}{2},\frac{\nu}{2}\min\Big\{1,\frac{1}{M \textsl{v}_{0}+h_{1}}\Big\}\Big\}\qquad 
\textrm{and} \qquad \tilde{c} :=\max\Big\{\frac{2}{c_{5}},c_{4}(\textsl{v}_{0}+1)\Big\},
$$
 where the constants are associated to $\mathsf{H}+\mathscr{O}(\mathsf{H})$ through Lemma \ref{lem:open} and $\textsl{v}_{0}$ depends only on these constants. 
 
\textbf{Step 1:}\\
Applying (\ref{eqn:dH(Y)}) to the Liouville vector field $X^{\dagger}$ associated to $H$ by Property (\ref{item:H1}) we obtain
$$
|\mathcal{A}^{H}(v,\eta)|+\|\nabla \mathcal{A}^{H}(v,\eta)\|_{L^{2}\times\mathbb{R}}(c_{1}(\|v\|_{L^{2}}+1)+|\eta|)  \geq |\eta|(c_{2}\|v\|^{2}_{L^{2}}-c_{3}).
$$
Therefore whenever $\|\nabla \mathcal{A}^{H}(v,\eta)\|_{L^{2}\times\mathbb{R}}<\varepsilon_{0}\leq \frac{c_{5}}{2}$ this implies
$$
|\mathcal{A}^{H}(v,\eta)|+\frac{c_{5}}{2}(c_{1}(\|v\|_{L^{2}}+1)+|\eta|)  \geq |\eta|(c_{2}\|v\|^{2}_{L^{2}}-c_{3}),
$$
and thus with $\|v\|_{L^{2}}$ large enough, that is
$$
\|v\|_{L^{2}}>\sqrt{\frac{2c_{3}+c_{5}}{2c_{2}}},
$$
the following inequality holds 
$$
\frac{|\mathcal{A}^{H}(v,\eta)|+\frac{c_{1}c_{5}}{2}(\|v\|_{L^{2}}+1)}{c_{2}\|v\|^{2}_{L^{2}}-c_{3}-\frac{c_{5}}{2}} \geq |\eta|.
$$
In particular, there exists large enough $\textsl{v}_{0}>0$ depending only on $\mathsf{H}+\mathcal{O}(\mathsf{H})$, such that for all $\textsl{v}\geq \textsl{v}_{0}$
$$
\frac{1}{c_{2}\textsl{v}^{2}-c_{3}-\frac{c_{5}}{2}}\leq \frac{2}{c_{5}} \quad \textrm{and} \quad \frac{c_{1}c_{5}(\textsl{v}+1)}{2c_{2}\textsl{v}^{2}-2c_{3}-c_{5}}\leq c_{4}.
$$
This leads to the conclusion that whenever
$$
\|\nabla \mathcal{A}^{H}(v,\eta)\|_{L^{2}\times\mathbb{R}}< \frac{c_{5}}{2} \quad \textrm{and} \quad \|v\|_{L^{2}}\geq \textsl{v}_{0},
$$
then
$$
|\eta| \leq \frac{2}{c_{5}}|\mathcal{A}^{H}(v,\eta)|+c_{4}.
$$
\textbf{Step 2:}\\
Let us now take $(v,\eta)\in C^{\infty}(S^{1},\mathbb{R}^{2n})\times \mathbb{R}$, such that
$$
\|\nabla \mathcal{A}^{H}(v,\eta)\|_{L^{2}\times\mathbb{R}}<\varepsilon_{0},\quad \textrm{and}\quad
\|v\|_{L^{2}} \leq \textsl{v}_{0},
$$
The fact that
$$
\varepsilon_{0} < \frac{\nu}{2}\min\Big\{1,\frac{1}{M \textsl{v}_{0}+h_{1}}\Big\},
$$
by Lemma \ref{lem:nbhd_close} implies that
$$
v(t)\in H^{-1}(-\nu,\nu) \qquad \forall\ t\in[0,1].
$$
Therefore, we can assume that the Liouville vector field $X^{\ddagger}$ of the Hamiltonian $H$ is defined along the whole loop $v$. Applying (\ref{eqn:dH(Y)}) to $X^{\ddagger}$ and using Property (\ref{item:H3}) we get the following relation
\begin{align*}
|\mathcal{A}^{H}(v,\eta)|+\|\nabla\mathcal{A}^{H}(v,\eta)\|_{L^{2}\times\mathbb{R}}(c_{4}(\|v\|_{L^{2}}+1)+|\eta|) & \geq c_{5} |\eta|, \\
|\mathcal{A}^{H}(v,\eta)|+\varepsilon_{0} c_{4} (\textsl{v}_{0}+1) & \geq (c_{5}-\varepsilon_{0})|\eta|.
\end{align*}
Since by assumption $\varepsilon_{0} < \frac{c_{5}}{2}$, we obtain the claimed relation namely
$$
|\eta| \leq \frac{2}{c_{5}} |\mathcal{A}^{H}(v,\eta)|+ c_{4}( \textsl{v}_{0}+1).
$$
\end{proof}

\begin{lem}
For every $\mu>0$, there exists $\varepsilon>0$, such that for every Hamiltonian $H \in \mathsf{H}+\mathscr{O}(\mathsf{H})$ if $(v,\eta)\in C^{\infty}(S^{1},\mathbb{R}^{2n})\times \mathbb{R}$ satisfies
$$
\|\nabla \mathcal{A}^{H}(v,\eta)\|_{L^{2}\times\mathbb{R}}<\varepsilon
$$
then there exists $t_{0}\in [0,1]$ such that $v(t_{0})\in  H^{-1}(-\tfrac{\mu}{2},\tfrac{\mu}{2})$.
 Moreover, if we assume $\|v\|_{L^{2}} \leq \textsl{v}$ then taking a possibly smaller $\varepsilon$ depending only on $\mu,\textsl{v},\mathsf{H}$ and $K$, we get that
$$
v(t)\in H^{-1}(-\mu,\mu)\quad \forall\ t\in [0,1].
$$
\label{lem:nbhd_close}

\end{lem}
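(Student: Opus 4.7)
The plan is to exploit the $L^{2}\times\mathbb{R}$ decomposition of $\nabla\mathcal{A}^{H}$. Since $J$ is $\omega$-compatible the induced norm is preserved under $J_{t}$, so
\begin{equation*}
\|\nabla\mathcal{A}^{H}(v,\eta)\|^{2}=\|\partial_{t}v-\eta X^{H}(v)\|_{L^{2}}^{2}+\Bigl|\int_{0}^{1}H(v(t))\,dt\Bigr|^{2}.
\end{equation*}
Hence the hypothesis $\|\nabla\mathcal{A}^{H}(v,\eta)\|<\varepsilon$ controls each summand separately by $\varepsilon$. For the first assertion, I would apply the mean value theorem for integrals to the continuous function $H\circ v\colon S^{1}\to\mathbb{R}$ to produce a $t_{0}\in[0,1]$ with $H(v(t_{0}))=\int_{0}^{1}H(v(t))\,dt$, so that $|H(v(t_{0}))|<\varepsilon$. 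Choosing $\varepsilon\le\mu/2$ immediately yields $v(t_{0})\in H^{-1}(-\tfrac{\mu}{2},\tfrac{\mu}{2})$.

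For the second assertion I need to control $|H(v(t))-H(v(t_{0}))|$ uniformly in $t$. The key is the symplectic identity $dH(X^{H})\equiv 0$, which allows me to rewrite the fundamental theorem of calculus in terms of the small quantity $\partial_{s}v-\eta X^{H}(v)$:
\begin{equation*}
H(v(t))-H(v(t_{0}))=\int_{t_{0}}^{t}dH_{v(s)}(\partial_{s}v(s))\,ds=\int_{t_{0}}^{t}dH_{v(s)}\bigl(\partial_{s}v(s)-\eta X^{H}(v(s))\bigr)\,ds.
\end{equation*}
Cauchy--Schwarz then gives
\begin{equation*}
|H(v(t))-H(v(t_{0}))|\le\|\nabla H(v)\|_{L^{2}}\cdot\|\partial_{t}v-\eta X^{H}(v)\|_{L^{2}}\le\|\nabla H(v)\|_{L^{2}}\,\varepsilon.
\end{equation*}

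Finally I would combine this with the uniform quadratic gradient bound (\ref{eqn:nablaH}) from Lemma \ref{lem:open}, namely $\|\nabla H(x)\|\le h_{1}+M\|x\|$ for every $H\in\mathsf{H}+\mathscr{O}(\mathsf{H})$, together with Minkowski and $\|v\|_{L^{1}}\le\|v\|_{L^{2}}\le\textsl{v}$, to obtain
\begin{equation*}
\|\nabla H(v)\|_{L^{2}}\le h_{1}+M\|v\|_{L^{2}}\le h_{1}+M\textsl{v}.
\end{equation*}
Choosing $\varepsilon<\tfrac{\mu}{2}\min\{1,(h_{1}+M\textsl{v})^{-1}\}$—which depends only on $\mu$, $\textsl{v}$, $\mathsf{H}$ and $K$ via the uniform constants $h_{1}$, $M$ supplied by Lemma \ref{lem:open}—then forces $|H(v(t))|<|H(v(t_{0}))|+\mu/2<\mu$ for every $t\in[0,1]$, as required. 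I do not expect any serious obstacle: the whole argument rests on the elementary identity $dH(X^{H})=0$, which converts the apparently useless piece $\partial_{t}v$ into the controlled piece $\partial_{t}v-\eta X^{H}(v)$ without requiring any a priori bound on $\eta$.
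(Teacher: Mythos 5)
Your proposal is correct and follows essentially the same route as the paper: bound $|\int_0^1 H(v)\,dt|$ by the gradient norm to locate $t_0$ (the paper argues by contradiction where you invoke the integral mean value theorem, a trivial variation), then use $dH(X^H)=0$ to rewrite $H(v(t))-H(v(t_0))$ as an integral of $dH(\partial_t v-\eta X^H(v))$, apply Cauchy--Schwarz, and control $\|\nabla H(v)\|_{L^2}$ via the uniform bound (\ref{eqn:nablaH}). The resulting choice $\varepsilon\le\tfrac{\mu}{2}\min\{1,(h_1+M\textsl{v})^{-1}\}$ matches the paper's.
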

\begin{proof}
Assume that $v\in C^{\infty}(S^{1},\mathbb{R}^{2n})$ has the property that $|H(v(t))|\geq \frac{\mu}{2}$ for all $t\in[0,1]$. Then
$$
\|\nabla \mathcal{A}^{H}(v,\eta)\|_{L^{2}\times\mathbb{R}}\geq\Big|\int_{0}^{1}H(v(t))dt\Big|\geq \frac{\mu}{2}.
$$
In particular, if we take $|\nabla \mathcal{A}^{H}(v,\eta)\|<\varepsilon\leq \frac{\mu}{2}$ we get a contradiction, unless there exists a $t_{0}\in [0,1]$ such that
$$
v(t_{0})\in H^{-1}(-\tfrac{\mu}{2},\tfrac{\mu}{2}).
$$

Let us now prove the second part of the lemma. If $\|v\|_{L^{2}}\leq \textsl{v}$, then by (\ref{eqn:nablaH}) we have that the gradient of $H$ is uniformly bounded 
$$
\|\nabla H(v)\|_{L^{2}} \leq h_{1}+M \|v\|_{L^{2}} \leq h_{1}+M \textsl{v}.
$$
Therefore, for every $t\in[0,1]$
\begin{align*}
(h_{1}+M \textsl{v})\ \|\nabla \mathcal{A}^{H}(v,\eta)\|_{L^{2}\times\mathbb{R}} & \geq \|\nabla H(v)\|_{L^{2}} \|\partial_{t}v-\eta X^{H}(v)\|_{L^{2}}\\
& \geq \int_{0}^{1} \|\nabla H(v(\tau))\|\ \|\partial_{t}v(\tau)-\eta X^{H}(v(\tau))\|d\tau\\
& \geq \int_{t_{0}}^{t} \|\nabla H(v(\tau))\|\ \|\partial_{t}v(\tau)-\eta X^{H}(v(\tau))\|d\tau\\
& \geq \int_{t_{0}}^{t} |\langle \nabla H(v(\tau)),\partial_{t}v(\tau)-\eta X^{H}(v(\tau))\rangle|d\tau\\
& = \int_{t_{0}}^{t} |dH(\partial_{t}v(\tau))|d\tau\\
& \geq \Big|\int_{t_{0}}^{t} dH(\partial_{t}v(\tau))d\tau\Big|\\
& = |H(v(t))-H(v(t_{0}))|\\
& \geq \big||H(v(t))|-|H(v(t_{0}))|\big|.
\end{align*}
So now if we take 
$$
\varepsilon \leq \frac{\mu}{2}\min\Big\{1,\frac{1}{M \textsl{v}+h_{1}}\Big\},
$$
then for $(v,\eta)\in C^{\infty}(S^{1},\mathbb{R}^{2n})\times \mathbb{R}$, such that
$\|\nabla \mathcal{A}^{H}(v,\eta)\|<\varepsilon$ and $\|v\|_{L^{2}}\leq \textsl{v}$ we will have that 
$$
\frac{\mu}{2}  \geq   \varepsilon(M \textsl{v}+h_{1})  > \big||H(v(t))|-|H(v(t_{0}))|\big|,\qquad
\textrm {and} \qquad v(t_{0}) \in H^{-1}(-\tfrac{\mu}{2},\tfrac{\mu}{2}),
$$
so in fact
$$
v(t) \in H^{-1}(-\mu,\mu) \qquad \forall \ t\in [0,1].
$$
\end{proof}

\begin{prop}
Let $\Gamma=\{H_{s},J_{s}\}_{s \in \mathbb{R}}$ be a smooth homotopy of Hamiltonians and almost complex structures, constant outside $[0,1]$, such that $\Gamma$ satisfies inequality (\ref{eqn:Hs2}) and
$$
\forall\ s\in \mathbb{R}\qquad (H_{s},J_{s}) \in (\mathsf{H}+\mathscr{O}(\mathsf{H}))\times \mathscr{J}(\mathbb{R}^{2n},\omega_{0}).
$$

Let $u\in C^{\infty}(\mathbb{R}, C^{\infty}(S^{1}, \mathbb{R}^{2n}) \times \mathbb{R})$ be a Floer trajectory of the corresponding time-dependent action functionals $\mathcal{A}^{H_{s}}$, such that
$$
\lim_{s\to -\infty}\mathcal{A}^{H_{s}}(u(s))\geq a \quad \textrm{and} \quad \lim_{s\to \infty}\mathcal{A}^{H_{s}}(u(s))\leq b.
$$
Then $\|\eta\|_{L^{\infty}(\mathbb{R})},\ \|\mathcal{A}^{H_{s}}(u)\|_{L^{\infty}(\mathbb{R})}$ and
$\|\nabla^{J_{s}} \mathcal{A}^{H_{s}} (u)\|_{L^{2}(\mathbb{R}\times S^{1})}$
are uniformly bounded by constants which depend only on $a,b$ the set of Hamiltonians $\mathsf{H}+\mathscr{O}(\mathsf{H})$ and continuously on $\|J\|_{L^{\infty}}=\max_{s\in[0,1]}\|J_{s}\|_{L^{\infty}}$.
\label{prop:bounds1}
\end{prop}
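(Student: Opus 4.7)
The argument follows the template of \cite{CieliebakFrauenfelder2009}, with the key new ingredient being a global pointwise estimate for $|\eta|$ tailored to admissible Hamiltonians. Setting $\phi(s):=\mathcal{A}^{H_s}(u(s))$, differentiation along the Floer equation (\ref{eqn:Gamma2}) yields
$$
\phi'(s) \;=\; \|\partial_s u(s)\|^2 \;+\; \eta(s)\int_0^1 \partial_s H_s(v(s,t))\,dt,
$$
where the norm is the one induced by the $J_s$-metric on $C^\infty(S^1,\mathbb{R}^{2n})\times\mathbb{R}$, coinciding with $\|\nabla^{J_s}\mathcal{A}^{H_s}(u(s))\|^2$. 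Since $\Gamma$ is constant outside $[0,1]$, one has $\phi'=\|\partial_s u\|^2\geq 0$ there, and the limit conditions together with monotonicity give $a\leq \phi(s)\leq \phi(0)$ on $(-\infty,0]$ and $\phi(1)\leq \phi(s)\leq b$ on $[1,\infty)$. In particular $\phi(0)\geq a$ and $\phi(1)\leq b$, so it suffices to establish a uniform $L^\infty$ bound on $\phi$ on the compact interval $[0,1]$.

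The next step is the key estimate: a pointwise bound, valid for every $s$, of the form
$$
|\eta(s)| \;\leq\; \tilde c\bigl(|\phi(s)|+1\bigr) \;+\; \frac{C\,\|J_s\|_\infty^{3/2}}{\varepsilon_0}\,\|\partial_s u(s)\|_{L^2}.
$$
Where $\|\nabla\mathcal{A}^{H_s}(u(s))\|_{L^2}<\varepsilon_0$, this is precisely Lemma \ref{lem:tilde}. Where the gradient is large, I combine identity (\ref{eqn:dH(Y)}) applied to the globally defined Liouville field $X^\dagger$ with Property (\ref{item:H1}): the lower bound $\int dH_s(X^\dagger)\geq c_2\|v\|_{L^2}^2 - c_3$ is quadratic in $\|v\|_{L^2}$, while $\|X^\dagger\|_{L^2}\leq c_1(\|v\|_{L^2}+1)$ is only linear, so Cauchy--Schwarz produces
$$
|\eta|(c_2\|v\|_{L^2}^2 - c_3 - \|\nabla\mathcal{A}\|_{L^2}) \;\leq\; |\phi| + c_1\|\nabla\mathcal{A}\|_{L^2}(\|v\|_{L^2}+1).
$$
An interpolation between this inequality and the conversion $\|\nabla\mathcal{A}^{H_s}\|_{L^2}\leq \|J_s\|_\infty^{1/2}\|\partial_s u\|_{L^2}$ produces the claimed bound with the precise $\|J\|_\infty^{3/2}/\varepsilon_0$ prefactor; the case where $\|v\|_{L^2}$ is small is absorbed into the $\tilde c(|\phi|+1)$ term via the second Liouville field $X^\ddagger$ from Property (\ref{item:H3}), exactly as in the proof of Lemma \ref{lem:tilde}.

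Substituting the pointwise $\eta$-estimate into the formula for $\phi'$ and integrating on $[0,s]\subseteq[0,1]$ yields
$$
|\phi(s) - \phi(0)| \;\leq\; \int_0^s \|\partial_s u\|^2 dr + \|\partial_s H_s\|_\infty \int_0^s \Bigl[\tilde c(|\phi(r)|+1) + \tfrac{C\|J\|_\infty^{3/2}}{\varepsilon_0}\|\partial_s u(r)\|\Bigr]dr.
$$
Assumption (\ref{eqn:Hs2}) says exactly that $\|\partial_s H_s\|_\infty\bigl(\tilde c + \|J\|_\infty^{3/2}/\varepsilon_0\bigr) < 1/8$, which is precisely the smallness required to absorb the $|\phi|$ and $\|\partial_s u\|$ contributions on the right and close a Gronwall-type loop. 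This produces the uniform $L^\infty$ bound on $\phi$ depending only on $a,b$, the family $\mathsf{H}+\mathscr{O}(\mathsf{H})$, and continuously on $\|J\|_\infty$. Re-inserting this into $\phi'$ gives the $L^1$ bound on $\|\partial_s u\|^2$ (i.e.~the $L^2$-bound on $\nabla^{J_s}\mathcal{A}^{H_s}$), and re-inserting both bounds into the pointwise $\eta$-estimate gives the $L^\infty$ bound on $\eta$.

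The hard part is the large-gradient regime of the $\eta$-bound: Lemma \ref{lem:tilde} is silent there, and one must instead exploit the Liouville vector field $X^\dagger$ globally, balancing the quadratic growth of $\int dH_s(X^\dagger)$ against the linear growth of $X^\dagger$ and then interpolating with the $\|J\|_\infty$-dependent norm conversion, so that the exact exponent $3/2$ and the factor $1/\varepsilon_0$ prescribed by (\ref{eqn:Hs2}) emerge naturally and allow the Gronwall step to close.
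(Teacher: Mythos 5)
Your overall architecture (differentiating $s\mapsto\mathcal{A}^{H_s}(u(s))$, exploiting monotonicity outside $[0,1]$, and closing a self-referential estimate using the smallness condition (\ref{eqn:Hs2})) matches the paper's. However, the central step — the $\eta$-estimate — has a genuine gap. You assert a \emph{pointwise} inequality
$|\eta(s)|\le\tilde c(|\mathcal{A}^{H_s}(u(s))|+1)+C\|J\|_\infty^{3/2}\varepsilon_0^{-1}\|\partial_s u(s)\|_{L^2}$
valid for every $s$, but the tools you invoke do not cover the regime where $\|\nabla\mathcal{A}^{H_s}(u(s))\|$ is large. Lemma \ref{lem:tilde} is only available when the gradient is below $\varepsilon_0$. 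The $X^\dagger$-inequality
$|\eta|\bigl(c_2\|v\|_{L^2}^2-c_3-\|\nabla\mathcal{A}\|\bigr)\le|\mathcal{A}|+c_1\|\nabla\mathcal{A}\|(\|v\|_{L^2}+1)$
gives nothing when $\|v\|_{L^2}$ is small and $\|\nabla\mathcal{A}\|$ is large, since the left-hand coefficient can then be nonpositive. And your claim that the small-$\|v\|_{L^2}$ case is "absorbed via $X^\ddagger$ exactly as in Lemma \ref{lem:tilde}" fails: that step of Lemma \ref{lem:tilde} needs the loop to lie entirely in $H^{-1}(-\nu,\nu)$, which is deduced from Lemma \ref{lem:nbhd_close} precisely under the small-gradient hypothesis you have dropped. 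Away from the critical set there is simply no slice-wise control of $\eta$ by the action and gradient at that slice (think of a small loop near an interior critical point of $H$ at a level of modulus $\ge\nu$, with $\eta$ arbitrarily large), so the "interpolation" producing the $\|J\|_\infty^{3/2}/\varepsilon_0$ prefactor cannot be made rigorous as stated.

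The paper circumvents this with a non-local argument, and this is the idea your proposal is missing. For each $s$ one sets $\tau_0(s):=\inf\{\tau\le s:\ \|\nabla\mathcal{A}^{H_\tau}(u(\tau))\|<\varepsilon_0\}$; this is finite because the total energy is finite. On $[\tau_0(s),s]$ the gradient exceeds $\varepsilon_0$, so the energy bound forces $|s-\tau_0(s)|\le\|J\|_\infty^{3}\varepsilon_0^{-2}(b-a+\|\eta\|_\infty\|\partial_sH_s\|_\infty)$, and Cauchy--Schwarz in $s$ then gives $|\eta(s)-\eta(\tau_0(s))|\le\|J\|_\infty^{3/2}\varepsilon_0^{-1}(b-a+\|\eta\|_\infty\|\partial_sH_s\|_\infty)$. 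Lemma \ref{lem:tilde} is applied only at $\tau_0(s)$, where its hypothesis holds. Taking the supremum over $s$ and absorbing the $\|\eta\|_\infty$ terms using (\ref{eqn:Hs2}) then yields the bounds on $\|\eta\|_\infty$, the action, and the energy simultaneously. If you replace your pointwise claim by this two-point argument, the rest of your proof goes through.
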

\begin{proof}
Before we start proving observe that $J_{s} \in \mathscr{J}(\mathbb{R}^{2n},\omega_{0})$ is a continuous family with respect to $s$, constant outside of $[0,1]$ as a consequence we have a limit on
\begin{equation}
\|J\|_{L^{\infty}}=\max_{s\in[0,1]}\|J_{s}\|_{L^{\infty}}<+\infty.
\label{eqn:Jinfty}
\end{equation}
Since $J_{s}^{2}=-Id$, we have $\|J\|_{L^{\infty}}\geq 1$. Moreover, for every $s \in \mathbb{R}$
$$
\frac{1}{\|J\|_{L^{\infty}}}\|\cdot\|_{L^{2}\times \mathbb{R}}^{2} \leq \|\cdot  \|^{2}_{g_{J_{s}}} \leq \|J\|_{L^{\infty}}\|\cdot\|_{L^{2}\times \mathbb{R}}^{2}.
$$
Analogously, for every $s \in \mathbb{R}$ we have
$$
\frac{1}{\|J\|_{L^{\infty}}}\|\nabla \mathcal{A}^{H_{s}}(u(s))\|_{L^{2}\times \mathbb{R}}\leq \|\nabla^{J_{s}} \mathcal{A}^{H_{s}}(u(s))\|_{L^{2}\times \mathbb{R}} \leq\|J\|_{L^{\infty}} \|\|\nabla \mathcal{A}^{H_{s}}(u(s))\|_{L^{2}\times \mathbb{R}}.
$$
Therefore
\begin{align}
\frac{1}{\|J\|_{L^{\infty}}^{3}}\|\nabla \mathcal{A}^{H_{s}} (u)\|_{L^{2}(\mathbb{R}\times S^{1})}^{2} & \leq\frac{1}{\|J\|_{L^{\infty}}}\|\nabla^{J_{s}} \mathcal{A}^{H_{s}} (u)\|_{L^{2}(\mathbb{R}\times S^{1})}^{2}\nonumber\\
& \leq \int_{-\infty}^{\infty}\|\nabla^{J_{s}} \mathcal{A}^{H_{s}}(u(s))\|^{2}_{g_{J_{s}}}ds\nonumber \\
&  \leq \|J\|_{L^{\infty}}\|\nabla^{J_{s}} \mathcal{A}^{H_{s}} (u)\|_{L^{2}(\mathbb{R}\times S^{1})}^{2}\nonumber \\
&  \leq \|J\|_{L^{\infty}}^{3}\|\nabla \mathcal{A}^{H_{s}} (u)\|_{L^{2}(\mathbb{R}\times S^{1})}^{2}.
\label{eqn:Jleq}
\end{align}
Since $u\in C^{\infty}(\mathbb{R}, C^{\infty}(S^{1}, \mathbb{R}^{2n}) \times \mathbb{R})$ is a Floer trajectory one can calculate the derivative of the action functional over $u$:
\begin{align*} 
\frac{d}{ds}\mathcal{A}^{H_{s}}(u(s)) & = \|\nabla^{J_{s}} \mathcal{A}^{H_{s}}(u(s))\|^{2}_{g_{J_{s}}}+(\partial_{s}\mathcal{A}^{H_{s}})(u(s))\\
& = \|\nabla^{J_{s}} \mathcal{A}^{H_{s}}(u(s))\|^{2}_{g_{J_{s}}}+\eta(s)\int_{0}^{1}\partial_{s}H_{s}(v(s,t))dt.
\end{align*}
Using the assumptions on the action of the endpoints one obtains
\begin{align*}
\mathcal{A}^{H_{s_{0}}}(u(s_{0})) & =\lim_{s\to -\infty}\mathcal{A}^{H_{0}}(u(s))+\int_{-\infty}^{s_{0}}\frac{d}{ds}\mathcal{A}^{H_{s}}(u(s))ds\\
& \geq  a + \int_{-\infty}^{s_{0}}\|\nabla^{J_{s}} \mathcal{A}^{H_{s}}(u(s))\|^{2}_{g_{J_{s}}}ds + \int _{0}^{s_{0}}\eta(s)\int_{0}^{1}\partial_{s}H_{s}(v(s,t))dtds\\
& \geq a  - \|\eta\|_{L^{\infty}}\|\partial_{s}H_{s}\|_{L^{\infty}}.
\end{align*}
Analogously, one obtains
\begin{align*}
\mathcal{A}^{H_{s_{0}}}(u(s_{0})) & \leq b - \int^{\infty}_{s_{0}}\|\nabla^{J_{s}} \mathcal{A}^{H_{s}}(u(s))\|^{2}_{g_{J_{s}}}ds - \int^{1}_{s_{0}}\eta(s)\int_{0}^{1}\partial_{s}H_{s}(v(s,t))dtds\\
& \leq b  + \|\eta\|_{L^{\infty}}\|\partial_{s}H_{s}\|_{L^{\infty}},
\end{align*}
which together leads to
\begin{equation}
\|\mathcal{A}^{H_{s}}( u)\|_{L^{\infty}} \leq \max\{|a|,|b|\} + \|\eta\|_{L^{\infty}}\|\partial_{s}H_{s}\|_{L^{\infty}}.
\label{eqn:act1}
\end{equation}
Likewise, we can bound
\begin{align*}
b-a & \geq \int_{-\infty}^{\infty}\frac{d}{ds}\mathcal{A}^{H_{s}}(u(s))ds  \\
& = \int_{-\infty}^{\infty}\|\nabla^{J_{s}} \mathcal{A}^{H_{s}}(u(s))\|^{2}_{g_{J_{s}}}ds+\int_{0}^{1}\eta(s)\int_{0}^{1}\partial_{s}H_{s}(v(s,t))dtds.
\end{align*}
Now combining the above with (\ref{eqn:Jleq}), we obtain the bounds on energy
\begin{align}
\|\partial_{s}u\|_{L^{2}(\mathbb{R}\times S^{1})}^{2} =\|\nabla^{J_{s}} \mathcal{A}^{H_{s}}(u)\|_{L^{2}(\mathbb{R}\times S^{1})}^{2} & \leq  \|J\|_{L^{\infty}} \int_{-\infty}^{\infty}\|\nabla^{J_{s}} \mathcal{A}^{H_{s}}(u(s))\|^{2}_{g_{J_{\tau}}}ds \nonumber \\
& \leq \|J\|_{L^{\infty}} (b-a + \|\eta\|_{L^{\infty}}\|\partial_{s}H_{s}\|_{L^{\infty}}). \label{eqn:energy1}
\end{align}
In particular by (\ref{eqn:Jleq}) the convergence of the integral
$$
\|\nabla \mathcal{A}^{H_{s}}(u)\|_{L^{2}(\mathbb{R}\times S^{1})} \leq \|J\|_{L^{\infty}}  \|\nabla^{J_{s}} \mathcal{A}^{H_{s}}(u)\|_{L^{2}(\mathbb{R}\times S^{1})},
$$
implies that, if we fix $\varepsilon_{0}>0$ as in Lemma \ref{lem:tilde}, then for small enough $s$
$$
\|\nabla \mathcal{A}^{H_{s}}(u(s))\|_{L^{2}\times\mathbb{R}}<\varepsilon_{0}.
$$

This ensures that for all $s \in \mathbb{R}$ the following value $\tau_{0}(s)$ is well defined and finite
$$
\tau_{0}(s): = \inf\{ \tau \leq s\ |\ \|\nabla \mathcal{A}^{H_{\tau}}(u(\tau))\|<\varepsilon_{0}\}.
$$
For $\tau \in [\tau_{0}(s),s]$ we have
$$
\|\nabla \mathcal{A}^{H_{\tau}}(u(\tau))\|_{L^{2}\times\mathbb{R}}\geq \varepsilon_{0}.
$$
therefore by (\ref{eqn:energy1}) and (\ref{eqn:Jleq})
\begin{align*}
|s-\tau_{0}(s)|\varepsilon_{0}^{2} & \leq \int_{\tau_{0}(s)}^{s} \|\nabla \mathcal{A}^{H_{\tau}}(u(\tau))\|_{L^{2}\times\mathbb{R}}^{2}d\tau\\
& \leq \|\nabla \mathcal{A}^{H_{s}}(u)\|^{2}_{L^{2}(\mathbb{R}\times S^{1})}\\
&  \leq \|J\|_{L^{\infty}}^{3} (b-a + \|\eta\|_{L^{\infty}}\|\partial_{s}H_{s}\|_{L^{\infty}}),\\
|s-\tau_{0}(s)| & \leq \frac{\|J\|_{L^{\infty}}^{3}}{\varepsilon_{0}^{2}}(b-a + \|\eta\|_{L^{\infty}}\|\partial_{s}H_{s}\|_{L^{\infty}}).
\end{align*}
Now using the above bounds we can calculate
\begin{align}
|\eta(s)-\eta(\tau_{0}(s))| & \leq \int_{\tau_{0}(s)}^{s}|\partial_{s}\eta(\tau)|d\tau \nonumber\\
& \leq \sqrt{|s-\tau_{0}(s)|}\sqrt{\int_{\tau_{0}(s)}^{s}|\partial_{s}\eta(\tau)|^{2}d\tau}\nonumber\\
& \leq \sqrt{|s-\tau_{0}(s)|}\sqrt{\int_{\tau_{0}(s)}^{s}\|\nabla^{J_{\tau}} \mathcal{A}^{H_{\tau}}(u(\tau))\|_{g_{J_{s}}}^{2}d\tau}\nonumber\\
& \leq \frac{\|J\|_{L^{\infty}}^{\frac{3}{2}}}{\varepsilon_{0}}(b-a +\|\eta\|_{L^{\infty}}\|\partial_{s}H_{s}\|_{L^{\infty}}). \label{eqn:eta1}
\end{align}

On the other hand, by definition of $\tau_{0}(s)$ the result of Lemma \ref{lem:tilde} applies giving
$$
|\eta(\tau_{0}(s))| \leq \tilde{c}(|\mathcal{A}^{H_{\tau_{0}(s)}}(u(\tau_{0}(s)))|+1).
$$
Now combining it with (\ref{eqn:act1}) and (\ref{eqn:eta1}) we can estimate that for any $s\in \mathbb{R}$ we have
\begin{align*}
|\eta (s)| & \leq  |\eta(\tau_{0}(s))|+|\eta(s)-\eta(\tau_{0}(s))|\\
& \leq \tilde{c}(|\mathcal{A}^{H_{\tau_{0}(s)}}(u(\tau_{0}(s)))|+1) + \frac{\|J\|_{L^{\infty}}^{\frac{3}{2}}}{\varepsilon_{0}}(b-a +\|\eta\|_{L^{\infty}}\|\partial_{s}H_{s}\|_{L^{\infty}})\\
& \leq \tilde{c}(\max\{|a|,|b|\} + \|\eta\|_{L^{\infty}}\|\partial_{s}H_{s}\|_{L^{\infty}}+1)
+\frac{\|J\|_{L^{\infty}}^{\frac{3}{2}}}{\varepsilon_{0}}(b-a +\|\eta\|_{L^{\infty}}\|\partial_{s}H_{s}\|_{L^{\infty}})\\
& = \tilde{c}(\max\{|a|,|b|\}+1)+\frac{(b-a)}{\varepsilon_{0}}\|J\|_{L^{\infty}}^{\frac{3}{2}}+\Big(\tilde{c}+\frac{\|J\|_{L^{\infty}}^{\frac{3}{2}}}{\varepsilon_{0}}\Big)\|\eta\|_{L^{\infty}}\|\partial_{s}H_{s}\|_{L^{\infty}}.
\end{align*}
Since the above inequality holds for every $s\in \mathbb{R}$, then in fact it holds for $\|\eta\|_{L^{\infty}}$, which together with (\ref{eqn:Hs2}), (\ref{eqn:act1}) and (\ref{eqn:energy1}) gives the claimed bounds
\begin{align}
\|\eta\|_{L^{\infty}(\mathbb{R})} & \leq \frac{8}{7} \Big(\tilde{c}(\max\{|a|, |b|\}+1)+\frac{b-a}{\varepsilon_{0}}\|J\|_{L^{\infty}}^{\frac{3}{2}}\Big) =: \mathfrak{y},\label{eqn:eta2}\\
\|\mathcal{A}^{H_{s}}(u)\|_{L^{\infty}(\mathbb{R})} & \leq \frac{1}{7}(8\max\{|a|, |b|\}+1+|b-a|)=:\mathfrak{a}, \label{eqn:act2} \\
\|\nabla^{J_{s}} \mathcal{A}^{H_{s}}(u(s))\|^{2}_{L^{2}(\mathbb{R}\times S^{1})} &  \leq \frac{1}{7}\|J\|_{L^{\infty}} \Big(8|b-a|+\max\{|a|,|b|\}+1\Big)=:\mathfrak{e}, \label{eqn:energy2}
\end{align}
where $\tilde{c}$ and $\varepsilon_{0}$ are the constants associated to the set $\mathsf{H}+\mathscr{O}(\mathsf{H})$ by Lemma \ref{lem:tilde}.
\end{proof}
Observe that the bound on the action does not depend on the homotopy, i.e. is uniform for all homotopies satisfying the assumptions of Proposition \ref{prop:bounds1}. Moreover, the bounds on the $\eta$ parameter and on the energy depend continuously on $\|J\|_{L^{\infty}}$, but not on the choice of $\{H_{s}\}_{s\in \mathbb{R}}$ as long as $H_{s}\in\mathsf{H}+\mathcal{O}(\mathsf{H})$ for all $s\in\mathbb{R}$.

Another consequence of the linearity condition established in Lemma \ref{lem:tilde} is that the homotopies that we consider in Theorem \ref{twr:ModuliCompact} satisfy the Novikov finiteness condition, which we prove in the lemma below. As a result, for every pair $a,b\in\mathbb{R}$ there exists a bounded subset of $\mathbb{R}^{2n+1}$ which contains the beginnings of all Floer trajectories in $\mathscr{M}^{\Gamma}(\Lambda_{0},\Lambda_{1})$ provided $\Lambda_{0}$ and $\Lambda_{1}$ satisfy (\ref{eqn:lambda}).
\begin{lem}
Let $\Gamma=\{H_{s},J_{s}\}_{s \in \mathbb{R}}$ be a smooth homotopy of Hamiltonians and almost complex structures defined as in Proposition \ref{prop:bounds1}. Then $\Gamma$ satisfies the Novikov finiteness condition. Moreover, for every pair $a,b\in \mathbb{R}$ and a compact subset $N\subseteq H^{-1}_{0}(0)$ there exists a compact subset of $\mathbb{R}^{2n+1}$, which contains all the connected components
$$
\Lambda_{0}\subseteq \mathscr{C}(\mathcal{A}^{H_{0}},N) \cap (\mathcal{A}^{H_{0}})^{-1}([a,\infty)),
$$
for which there exists a connected component
$$
\Lambda_{1}\subseteq \Crit(\mathcal{A}^{H_{1}}) \cap (\mathcal{A}^{H_{1}})^{-1}((-\infty,b]),
$$
such that
$$
\mathscr{M}^{\Gamma}(\Lambda_{0},\Lambda_{1}) \neq \emptyset .
$$
 \label{lem:ends}
\end{lem}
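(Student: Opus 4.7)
The plan is to deduce both assertions directly from the uniform action and $\eta$-bounds of Proposition \ref{prop:bounds1}, combined with Property (\ref{item:PO}) of the reference Hamiltonian $H_0$. The underlying observation is that $\mathcal{A}^{H_i}$ is locally constant on $Crit(\mathcal{A}^{H_i})$, since $d\mathcal{A}^{H_i}$ vanishes there; consequently on each connected component $\Lambda_i$ the action takes a single value, and the asymptotic limits of any $u\in \mathscr{M}^{\Gamma}(\Lambda_0,\Lambda_1)$ as $s\to\pm\infty$ equal $\mathcal{A}^{H_0}(\Lambda_0)$ and $\mathcal{A}^{H_1}(\Lambda_1)$ respectively. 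This is the bridge that lets the a priori bounds on trajectories turn into a priori bounds on the critical values they connect.

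For the Novikov finiteness condition I will fix $(a,b)\in CritVal(\mathcal{A}^{H_0})\times CritVal(\mathcal{A}^{H_1})$ and consider any pair $\Lambda_0,\Lambda_1$ appearing in the definition of $A(\Gamma,a,b)$ or $B(\Gamma,a,b)$ whose moduli space is non-empty. Picking any $u\in\mathscr{M}^{\Gamma}(\Lambda_0,\Lambda_1)$, the asymptotic values satisfy the hypotheses of Proposition \ref{prop:bounds1} with the given $a,b$, yielding $\|\mathcal{A}^{H_s}(u)\|_{L^{\infty}}\leq \mathfrak{a}$; passing to the limits transfers this bound to $\mathcal{A}^{H_i}(\Lambda_i)$ and forces $A(\Gamma,a,b)\geq -\mathfrak{a}$ and $B(\Gamma,a,b)\leq \mathfrak{a}$.

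For the second assertion, I fix $a,b$ and a compact $N\subseteq H_0^{-1}(0)$ and let $\Lambda_0\subseteq \mathscr{C}(\mathcal{A}^{H_0},N)\cap (\mathcal{A}^{H_0})^{-1}([a,\infty))$ be any component admitting some $\Lambda_1$ as in (\ref{eqn:lambda1}) with $\mathscr{M}^{\Gamma}(\Lambda_0,\Lambda_1)\neq\emptyset$. Proposition \ref{prop:bounds1} simultaneously provides $\|\eta\|_{L^{\infty}}\leq \mathfrak{y}$ and $\|\mathcal{A}^{H_s}(u)\|_{L^{\infty}}\leq \mathfrak{a}$ along any such Floer trajectory, and taking $s\to -\infty$ transfers both bounds to every point of $\Lambda_0$. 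The case split built into the definition of $\mathscr{C}(\mathcal{A}^{H_0},N)$ then leaves two possibilities: either $\mathcal{A}^{H_0}(\Lambda_0)=0$, in which case $\Lambda_0\subseteq N\times\{0\}$ by definition, or $0<|\mathcal{A}^{H_0}(\Lambda_0)|\leq \mathfrak{a}$, in which case (\ref{item:PO}) applied with $n=\lceil \mathfrak{a}\rceil$ furnishes a compact $K_n\subseteq \mathbb{R}^{2n}$ containing $v(S^1)$ for every $(v,\eta)\in\Lambda_0$. Taking the union
$$
\bigl(K_n\times[-\mathfrak{y},\mathfrak{y}]\bigr)\cup (N\times\{0\})\subseteq \mathbb{R}^{2n+1}
$$
yields the required compact set, uniform in the choice of $\Lambda_0$.

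The whole argument is essentially a bookkeeping exercise once Proposition \ref{prop:bounds1} and Property (\ref{item:PO}) are in hand; no new estimate is required. The only point deserving a touch of care is the dichotomy according to whether the action of $\Lambda_0$ vanishes, since (\ref{item:PO}) only governs critical points of nonzero action — the role of the auxiliary set $\mathscr{C}(\mathcal{A}^{H_0},N)$ in the hypothesis is precisely to absorb the zero-action stratum $N\times\{0\}$ by hand, and so does not present a real obstacle.
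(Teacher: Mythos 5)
Your proof is correct and follows essentially the same strategy as the paper: bound $\eta$ and the action along trajectories via Proposition \ref{prop:bounds1}, transfer the bounds to $\Lambda_{0}$ by passing to the asymptotic limit, and then split according to whether the action of the component vanishes, handling the nonzero-action stratum with (\ref{item:PO}) and the zero-action stratum with the set $N$ built into $\mathscr{C}(\mathcal{A}^{H_{0}},N)$. The one place you diverge is the Novikov finiteness condition: the paper invokes Corollary 3.8 of \cite{CieliebakFrauenfelder2009} (which yields the explicit bounds $A(\Gamma,a,b)\geq\min\{2a,-1\}$ and $B(\Gamma,a,b)\leq\max\{2b,1\}$, subsequently used to define the action window for $\textsl{v}_{3}$), whereas you extract the two-sided bound $|\mathcal{A}^{H_{i}}(\Lambda_{i})|\leq\mathfrak{a}$ directly from the $L^{\infty}$ action estimate of Proposition \ref{prop:bounds1} and the constancy of the action on connected components of the critical set. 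Your version is more self-contained, since it avoids the external citation and the underlying mechanism (the linearity condition of Lemma \ref{lem:tilde}) is already packaged inside Proposition \ref{prop:bounds1}; the paper's version gives bounds depending only on $a,b$ in a slightly more explicit form, which it reuses later. Both routes establish the statement as formulated.
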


\begin{proof}
Since $\Gamma=\{H_{s},J_{s}\}_{s \in \mathbb{R}}$ is a homotopy defined as in Theorem \ref{twr:ModuliCompact}, we can apply Lemma \ref{lem:tilde}
to prove the linearity condition between the action and the $\eta$ as stated in (\ref{eqn:Hs2}). Having established (\ref{eqn:Hs2}), we can directly apply Corollary 3.8 from \cite{CieliebakFrauenfelder2009}, which gives us a relation between $\mathcal{A}^{H^{0}}(\Lambda^{0})$ and $\mathcal{A}^{H^{1}}(\Lambda^{1})$, namely if $\mathscr{M}^{\Gamma}(\Lambda_{0},\Lambda_{1})\neq \emptyset$, then
$$
a \leq \max\{2 b, 1\} \qquad \textrm{and} \qquad b \geq \min\{2 a, -1\}.
$$
In particular, $\Gamma$ satisfies the Novikov finiteness condition:
$$
A(\Gamma,a,b)  \geq \min\{2 a, -1\},\qquad \textrm{and} \qquad
B(\Gamma,a,b )  \leq \max\{2 b, 1\},
$$
where $A(\Gamma,a,b)$ and $B(\Gamma,a,b )$ are defined as in (\ref{eqn:Aab}) and (\ref{eqn:Bab}) respectively.

By assumption of Theorem \ref{twr:ModuliCompact} $H_{0}$ satisfies (\ref{item:PO}), hence all the connected components
$$
\Lambda_{0} \subseteq \Crit(\mathcal{A}^{H_{0}})\cap (\mathcal{A}^{H_{0}})^{-1}([a,\max\{2 b, 1\}]\setminus \{0\}) \\
$$
have the property that
$$
N:=\bigcup_{(v,\eta) \in \Lambda_{0}}v(S^{1})
$$
is a bounded subset of $H^{-1}_{0}(0)$. Define
\begin{equation}
\textsl{v}_{3}:=\sup\{\|v\|_{L^{\infty}}\ |\ (v,\eta)\in \mathscr{C}(\mathcal{A}^{H_{0}},N)\cap (\mathcal{A}^{H_{0}})^{-1}([a,\max\{2 b, 1\}]\setminus \{0\})\}.
\label{eqn:v3}
\end{equation}
Then combining with the boundedness of $\eta$ by $\mathfrak{y}$ established in Proposition \ref{prop:bounds1} and Property (\ref{item:H2}), we obtain the uniform boundedness in the $W^{1,2}\times\mathbb{R}$ norm namely for all $(v,\eta)$ in
$$
\mathscr{C}(\mathcal{A}^{H_{0}},N)\cap (\mathcal{A}^{H_{0}})^{-1}([a,\max\{2 b, 1\}]),
$$
one has
$$
|\eta|  \leq \mathfrak{y},\qquad
\|v\|_{L^{2}} \leq \textsl{v}_{3}\qquad \textrm{and}\qquad
\|\partial_{t}v\|_{L^{2}} \leq \mathfrak{y}(h_{1}+M \textsl{v}_{3}).
$$
\end{proof}

\section{Set of infinitesimal action derivation}
\label{sec:ActDeriv}

In Proposition \ref{prop:bounds1} we have established that for Floer trajectories as in the setting of Theorem \ref{twr:ModuliCompact} there exist uniform bounds on $\eta$ parameter, energy and action. Now it will be convenient to analyze the Floer trajectories in the so called set of infinitesimal action derivation $\mathcal{B}^{\Gamma}(\mathfrak{a},\mathfrak{y},\varepsilon)$.
\begin{define}
Let $\Gamma=\{H_{s},J_{s}\}_{s \in \mathbb{R}}$ be a smooth homotopy of
Hamiltonians and almost complex structures, constant outside of $[0,1]$ and let $J_{s}\in \mathscr{J}(\mathbb{R}^{2n},\omega_{0})$  for all $s\in \mathbb{R}$.

Then for every triple of constants $0<\varepsilon,\mathfrak{a},\mathfrak{y}<\infty$ we define the \textbf{set of infinitesimal action derivation} of $\Gamma$ as
\begin{align*}
\mathcal{B}^{\Gamma}(\mathfrak{a},\mathfrak{y},\varepsilon) :=\Big\{ (v,\eta) & \in C^{\infty}(S^{1},\mathbb{R}^{2n})\times \mathbb{R} \ \Big|\  |\eta|\leq \mathfrak{y},\\ & \exists\ s \in [0,1]\ |\mathcal{A}^{H_{s}}(v,\eta)|\leq \mathfrak{a},\ \& \ \|\nabla^{J_{s}}\mathcal{A}^{H_{s}}(v,\eta)\|_{L^{2}\times \mathbb{R}}\leq \varepsilon \Big\}.
\end{align*}
\label{def:B}
\end{define}
Observe that for each pair $\mathfrak{a}, \mathfrak{y}>0$ a subset of critical points of $\mathcal{A}^{H_{s}}$ is contained in $\mathcal{B}^{\Gamma}(\mathfrak{a},\mathfrak{y},\varepsilon)$, namely for all $s\in[0,1]$
$$
 \{(v,\eta) \in \Crit(\mathcal{A}^{H_{s}}) \ \Big|\  |\eta|\leq \mathfrak{y}\ \& \ |\mathcal{A}^{H_{s}}(v,\eta)|\leq \mathfrak{a}\} \subseteq \mathcal{B}^{\Gamma}(\mathfrak{a},\mathfrak{y},\varepsilon),
$$
for all $\varepsilon>0$.

In particular, for all $\mathfrak{a}, \mathfrak{y}>0$ and every $s \in [0,1]$ the whole $0$-level-set of $H_{s}$ is contained in $\mathcal{B}^{\Gamma}(\mathfrak{a},\mathfrak{y},\varepsilon)$,
$$
\forall\ s \in [0,1]\qquad H^{-1}_{s}(0)\times\{0\}\subseteq \mathcal{B}^{\Gamma}(\mathfrak{a},\mathfrak{y},\varepsilon).
$$
Therefore, for any triple $\mathfrak{a}, \mathfrak{y},\varepsilon>0$, whenever the $H^{-1}_{s}(0)$ are non-compact, then  $\mathcal{B}^{\Gamma}(\mathfrak{a},\mathfrak{y},\varepsilon)$ is certainly not bounded.

An important step in finding the bounds on Floer trajectories is to localize $\mathcal{B}^{\Gamma}(\mathfrak{a},\mathfrak{y},\varepsilon)$. The time that a Floer trajectory spends outside of $\mathcal{B}^{\Gamma}(\mathfrak{a},\mathfrak{y},\varepsilon)$ is bounded by the energy, as discussed in Lemma \ref{lem:Kbound}. Unfortunately, the time that a Floer trajectory spends inside of $\mathcal{B}^{\Gamma}(\mathfrak{a},\mathfrak{y},\varepsilon)$ cannot be bounded this way, so in principle the trajectories could escape to infinity inside $\mathcal{B}^{\Gamma}(\mathfrak{a},\mathfrak{y},\varepsilon)$, since it is not bounded. 

Now, recall that in the setting of Proposition \ref{prop:bounds1}, for $\mathfrak{y}$ and $\mathfrak{a}$ as in (\ref{eqn:eta2}) and (\ref{eqn:act2}) and for a pair of connected components $\Lambda_{0}$ and $\Lambda_{1}$ satisfying (\ref{eqn:lambda}),
the corresponding Floer trajectories in $\mathscr{M}^{\Gamma}(\Lambda_{0},\Lambda_{1})$ have their the action and $\eta$ parameter uniformly bounded, i.e.:
$$
\sup_{s\in\mathbb{R}}|\mathcal{A}^{H_{s}}(u(s))| \leq \mathfrak{a}, \qquad \sup_{s\in\mathbb{R}}|\eta(s)| \leq \mathfrak{y}.
$$
As a result, we have
$$
u \in \mathscr{M}^{\Gamma}(\Lambda_{0},\Lambda_{1}) \quad \& \quad \|\nabla^{J_{s}} \mathcal{A}^{H_{s}}(u(s))\|_{L^{2}\times \mathbb{R}}<\varepsilon \quad \Longrightarrow \quad u(s)\in \mathcal{B}^{\Gamma}(\mathfrak{a},\mathfrak{y},\varepsilon).
$$
This implies that every such Floer trajectory starts and ends in $\mathcal{B}^{\Gamma}(\mathfrak{a},\mathfrak{y},\varepsilon)$ for all $\varepsilon>0$. In other words, if $u \in  \mathscr{M}^{\Gamma}(\Lambda_{0},\Lambda_{1})$, then 
$$
\forall\ \varepsilon>0,\ \exists\ S_{\varepsilon}>0,\ \forall\ |s|>S_{\varepsilon} \qquad u(s) \in \mathcal{B}^{\Gamma}(\mathfrak{a},\mathfrak{y},\varepsilon).
$$

Denote by $\Sigma \subseteq C^{\infty}(S^{1}, \mathbb{R}^{2n})$ the set of constant loops on $\mathsf{H}^{-1}(0)$ and by $\mathcal{U}_{\delta}^{1}$ the $W^{1,2}\times \mathbb{R}$ neighborhood of $\Sigma\times\{0\}$ defined by
\begin{equation}
\mathcal{U}_{\delta}^{1} := \{(v,\eta)\in C^{\infty}(S^{1}, \mathbb{R}^{2n})\times \mathbb{R}\ |\ \dist_{W^{1,2}\times \mathbb{R}}((v,\eta),\Sigma\times\{0\})<\delta\}, \label{eqn:U1}
\end{equation}

In the following Proposition we will prove that for every $\mathfrak{a},\mathfrak{y},\delta>0$ there exists $\varepsilon>0$ and a set $K_{\delta}^{1}\subseteq C^{\infty}(S^{1}, \mathbb{R}^{2n})$, bounded in $W^{1,2}\times \mathbb{R}$ norm, such that
$$
\mathcal{B}^{\Gamma}(\mathfrak{a},\mathfrak{y},\varepsilon)\subseteq K_{\delta}^{1}\cup \mathcal{U}_{\delta}^{1},
$$
provided $\|J\|_{L^{\infty}}<+\infty$.
In other words if $(v,\eta)\in \mathcal{B}^{\Gamma}(\mathfrak{a},\mathfrak{y},\varepsilon)$, then either $v$ is bounded in $L^{\infty}$ or $\dist_{W^{1,2}}(v,\Sigma)<\delta$.
\begin{prop}
Consider a homotopy $\Gamma$ as in Proposition \ref{prop:bounds1}. Fix $\mathfrak{a},\mathfrak{y}<\infty$. Then for all $\delta>0$ and $r> \sup_{x\in K}\|x\|$, there exist $\varepsilon_{2}(\delta,\|J\|_{L^{\infty}})>0$ and $\textsl{v}_{2}(\delta,r)>0$ (depending also on $\mathfrak{a}$ and $\mathfrak{y}$), such that for all $\varepsilon\in (0,\varepsilon_{2}(\delta,\|J\|_{L^{\infty}}))$ and $(v,\eta)\in\mathcal{B}^{\Gamma}(\mathfrak{a},\mathfrak{y},\varepsilon)$ one of the following holds:
\begin{enumerate}[label*=\arabic*.]
\item If $\|v\|_{L^{2}}\geq \textsl{v}_{2}$ then 
\[
\|v(t)\|\geq r \quad \forall\ t\in S^{1}\quad \textrm{and}\quad 
\dist_{W^{1,2}\times \mathbb{R}}((v,\eta),\Sigma\times\{0\})<\delta
\]
where $\Sigma$ is the set of constant loops on $\mathsf{H}^{-1}(0)$.
\item If $\|v\|_{L^{2}}\leq \textsl{v}_{2}$, then in fact $(v,\eta)$ is uniformly bounded in $W^{1,2}\times \mathbb{R}$.
\end{enumerate}
\label{prop:partition}
\end{prop}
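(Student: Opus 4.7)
The plan is to dichotomize on $\|v\|_{L^2}$ and, in the large regime, to combine the Liouville identity (\ref{eqn:dH(Y)}) with a Sobolev argument on the loop. For $(v,\eta)\in\mathcal{B}^\Gamma(\mathfrak{a},\mathfrak{y},\varepsilon)$ pick the slice $s_0\in[0,1]$ on which $|\mathcal{A}^{H_{s_0}}(v,\eta)|\leq\mathfrak{a}$ and $\|\nabla^{J_{s_0}}\mathcal{A}^{H_{s_0}}(v,\eta)\|_{L^2\times\mathbb{R}}\leq\varepsilon$; the explicit formula for $\nabla^{J_{s_0}}\mathcal{A}^{H_{s_0}}$, after a norm adjustment of order $\|J\|_\infty$, controls both $\|\partial_t v-\eta X^{H_{s_0}}(v)\|_{L^2}$ and $|\int_0^1 H_{s_0}(v)\,dt|$ by $O(\varepsilon)$.

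\textbf{Case 1: $\|v\|_{L^2}$ large.} I would plug the global Liouville field $X^\dagger$ of (\ref{item:H1}) into (\ref{eqn:dH(Y)}): the left-hand side is bounded by $\mathfrak{a}+C\varepsilon(\|v\|_{L^2}+1)+\mathfrak{y}\varepsilon$, while the right-hand side is at least $|\eta|(c_2\|v\|_{L^2}^2-c_3)$. Once $\|v\|_{L^2}$ exceeds an absolute threshold this forces not only $|\eta|\to 0$ but also $|\eta|\cdot\|v\|_{L^2}=O(\varepsilon)$, and combining with (\ref{eqn:nablaH}) yields $\|\partial_t v\|_{L^2}=O(\varepsilon)$. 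The Poincar\'e--Sobolev embedding $W^{1,2}(S^1)\hookrightarrow C^0(S^1)$ then produces $\|v-\bar v\|_{L^\infty}=O(\varepsilon)$, with $\bar v$ the mean of $v$, so $\|\bar v\|$ lies within $O(\varepsilon)$ of $\|v\|_{L^2}$. Choosing $\textsl{v}_2$ large enough (depending on $r$ and on the constants hidden in the $O(\varepsilon)$ terms) ensures $\|v(t)\|\geq r>\sup_K\|x\|$ for every $t\in S^1$, so the loop avoids $K$ and $H_{s_0}$ agrees with $\mathsf{H}$ along it. Then $|\mathsf{H}(\bar v)|$ is controlled by $|\int H_{s_0}(v)\,dt|$ plus a Lipschitz correction of size $O(\varepsilon)$ from (\ref{eqn:nablaH}); using (\ref{item:H3}), which guarantees $\|\nabla\mathsf{H}\|$ is bounded below on $\mathsf{H}^{-1}((-\nu,\nu))$, I can project $\bar v$ onto $\Sigma=\mathsf{H}^{-1}(0)$ with a correction of the same order, producing a constant loop $c\in\Sigma$ within $W^{1,2}$-distance $O(\varepsilon)$ of $v$. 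Together with $|\eta|=O(\varepsilon)$, this places $(v,\eta)$ in $\mathcal{U}_\delta^1$ provided $\varepsilon<\varepsilon_2(\delta,\|J\|_\infty)$.

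\textbf{Case 2: $\|v\|_{L^2}\leq\textsl{v}_2$.} This case is essentially immediate: $|\eta|\leq\mathfrak{y}$ and $\|v\|_{L^2}\leq\textsl{v}_2$ are given, while (\ref{eqn:nablaH}) yields $\|\partial_t v\|_{L^2}\leq C\varepsilon+\mathfrak{y}(h_1+M\textsl{v}_2)$, producing the required uniform $W^{1,2}\times\mathbb{R}$ bound.

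The main obstacle will be the quantitative interplay in Case 1 between the smallness of $|\eta|$ and the possible largeness of $\|v\|_{L^2}$: I need $|\eta|\cdot\|v\|_{L^2}$ to remain $O(\varepsilon)$ uniformly throughout $\|v\|_{L^2}\geq\textsl{v}_2$, so that the Sobolev step really yields a pointwise bound $\|v-\bar v\|_{L^\infty}<\delta/3$, and so that the projection to $\Sigma$ can be carried out with a correction of the same order. The thresholds $\varepsilon_2(\delta,\|J\|_\infty)$ and $\textsl{v}_2(\delta,r)$ then emerge from bookkeeping the chained constants, chosen so that each piece of the triangle inequality bounding $\mathrm{dist}_{W^{1,2}\times\mathbb{R}}((v,\eta),\Sigma\times\{0\})$ is strictly less than $\delta/3$.
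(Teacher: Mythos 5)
Your proposal follows essentially the same route as the paper's proof: the dichotomy on $\|v\|_{L^{2}}$, the use of the Liouville identity (\ref{eqn:dH(Y)}) with $X^{\dagger}$ to force $|\eta|$ and $\|\partial_{t}v\|_{L^{2}}$ below $\delta/4$ in the large regime (this is the paper's Lemma \ref{lem:nbhd_far}), the observation that the loop then avoids $K$ so that $H_{s}=\mathsf{H}$ along it, the passage from a small value of $|H|$ on the loop to $L^{2}$-proximity of $v$ to $\Sigma$ (Lemmas \ref{lem:nbhd_close} and \ref{lem:banan}), and the trivial Case 2. One point needs correcting: you claim that (\ref{item:H3}) alone guarantees $\|\nabla \mathsf{H}\|$ is bounded below on $\mathsf{H}^{-1}((-\nu,\nu))$, but (\ref{item:H3}) only yields $\|\nabla H(x)\|\geq c_{5}/(c_{4}(\|x\|^{2}+1))$, which decays to zero at infinity since $X^{\ddagger}$ may grow quadratically; the uniform lower bound requires combining this with the linear growth $\|\nabla H(x)\|\geq \tfrac{c_{2}}{c_{1}}(\|x\|-h_{1}')$ coming from (\ref{item:H1}), which is precisely the first step of Lemma \ref{lem:banan}. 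Since in your Case 1 the loop has large norm, (\ref{item:H1}) alone would in fact suffice there, so the gap is easily repaired, but the projection step as justified would not go through. You also correctly identify that $|\eta|\cdot\|v\|_{L^{2}}$ is only controlled after taking \emph{both} $\varepsilon$ small and $\|v\|_{L^{2}}\geq \textsl{v}_{2}$ large (the bound is of order $\varepsilon+\mathfrak{a}/\|v\|_{L^{2}}$, not $O(\varepsilon)$ outright), which is exactly how the thresholds $\varepsilon_{1}(\delta)$ and $\textsl{v}_{1}(\delta)$ are chosen in the paper.
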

Before presenting the proof of Proposition \ref{prop:partition}, we will first prove a lemma, which describes the set $\mathcal{B}^{\Gamma}(\mathfrak{a},\mathfrak{y},\varepsilon)$ far away.

\begin{lem}
Let $\Gamma=\{H_{s},J_{s}\}_{s \in \mathbb{R}}$ be a smooth homotopy of Hamiltonians and almost complex structures as in Proposition \ref{prop:bounds1}.
 Fix $\mathfrak{a},\mathfrak{y} >0$. Then for every $\delta>0$, there exist $\varepsilon_{1}(\delta),\textsl{v}_{1}(\delta)>0$, depending only on $\mathfrak{a}$ and $\delta$, such that whenever
 $(v,\eta) \in \mathcal{B}^{\Gamma_{J_{0}}}(\mathfrak{a},\mathfrak{y},\varepsilon_{1}(\delta))$ and $\|v\|_{L^{2}}>\textsl{v}_{1}(\delta)$, the derivative $\partial_{t}v$ and $\eta$ are both bounded by $\delta$
$$
|\eta| \leq \delta,\qquad \|\partial_{t}v\|_{L^2}\leq \delta.
$$
\label{lem:nbhd_far}
\end{lem}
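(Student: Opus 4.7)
\textbf{Proof plan for Lemma \ref{lem:nbhd_far}.} The plan is to mimic Step~1 of the proof of Lemma~\ref{lem:tilde} using Property~(\ref{item:H1}), but to push the estimates further so as to obtain bounds that tend to zero as $\|v\|_{L^2}\to\infty$ and $\varepsilon\to 0$. Specifically, I would apply the identity~(\ref{eqn:dH(Y)}) with the global Liouville vector field $X^\dagger$ furnished by Property~(\ref{item:H1}). Bounding the duality pairing by $\|\nabla^{J_0}\mathcal{A}^{H_s}(v,\eta)\|_{L^2\times\mathbb{R}}\cdot\|(X^\dagger(v),\eta)\|_{L^2\times\mathbb{R}}$, using $\|X^\dagger(v)\|_{L^2}\leq c_1(\|v\|_{L^2}+1)$ and the hypotheses $|\mathcal{A}^{H_s}(v,\eta)|\leq \mathfrak{a}$, $\|\nabla^{J_0}\mathcal{A}^{H_s}(v,\eta)\|_{L^2\times\mathbb{R}}\leq \varepsilon$, $|\eta|\leq \mathfrak{y}$, together with the pointwise inequality $dH_{s}(X^\dagger)\geq c_2\|v\|^2-c_3$ valid for every $H_s\in \mathsf{H}+\mathscr{O}(\mathsf{H})$, I obtain
\[
|\eta|\bigl(c_2\|v\|_{L^2}^2-c_3\bigr)\ \leq\ \mathfrak{a}+\varepsilon\bigl(c_1(\|v\|_{L^2}+1)+\mathfrak{y}\bigr).
\]

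Next I choose $\textsl{v}_1(\delta)$ large enough that $c_2\textsl{v}_1^2-c_3\geq \tfrac{c_2}{2}\textsl{v}_1^2$ and, say, $\tfrac{c_2}{2}\textsl{v}_1^2\geq 4\mathfrak{a}/\delta$. Dividing the above inequality through by the coefficient of $|\eta|$ yields
\[
|\eta|\ \leq\ \frac{2\mathfrak{a}}{c_2\|v\|_{L^2}^2}+\frac{2\varepsilon\,c_1(\|v\|_{L^2}+1)+2\varepsilon\mathfrak{y}}{c_2\|v\|_{L^2}^2},
\]
and, multiplying through by $\|v\|_{L^2}$,
\[
|\eta|\cdot\|v\|_{L^2}\ \leq\ \frac{2\mathfrak{a}}{c_2\|v\|_{L^2}}+\frac{2\varepsilon c_1}{c_2}\Bigl(1+\frac{1+\mathfrak{y}}{\|v\|_{L^2}}\Bigr).
\]
The first terms are already below a prescribed threshold for all $\|v\|_{L^2}\geq \textsl{v}_1$; the remaining $\varepsilon$-dependent tails are then driven below the threshold by choosing $\varepsilon_1(\delta)$ small enough. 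In this way both $|\eta|$ and $|\eta|\cdot\|v\|_{L^2}$ can be made simultaneously smaller than any prescribed quantity proportional to $\delta$.

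Finally, I bound $\|\partial_t v\|_{L^2}$ by reading off the first component of the gradient: since $\nabla^{J_0}\mathcal{A}^{H_s}(v,\eta)_1=-J_0(\partial_t v-\eta X^{H_s}(v))$ and $J_0$ is $\omega_0$-compatible, one has $\|\partial_t v-\eta X^{H_s}(v)\|_{L^2}\leq \|J_0\|_\infty\,\varepsilon$. Combining this with~(\ref{eqn:nablaH}), valid uniformly for all $H_s\in \mathsf{H}+\mathscr{O}(\mathsf{H})$, gives
\[
\|\partial_t v\|_{L^2}\ \leq\ \|J_0\|_\infty\,\varepsilon+h_1|\eta|+M\,|\eta|\cdot\|v\|_{L^2},
\]
each term of which is $\leq \delta/3$ once $\textsl{v}_1$ is sufficiently large and $\varepsilon_1$ sufficiently small. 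The main subtlety — and the reason for handling $|\eta|$ and $|\eta|\cdot\|v\|_{L^2}$ separately — is that although $|\eta|$ shrinks like $1/\|v\|_{L^2}^2$ from the action term, the gradient term in the bound on $|\eta|\cdot\|v\|_{L^2}$ has a floor of order $\varepsilon c_1/c_2$ that does \emph{not} vanish as $\textsl{v}_1\to\infty$. Hence the parameters must be chosen in the right order: first fix $\textsl{v}_1(\delta)$ to absorb the action contribution, and only \emph{then} pick $\varepsilon_1(\delta)$ small enough to kill the surviving $\varepsilon$-proportional terms.
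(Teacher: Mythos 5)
Your proposal is correct and follows essentially the same route as the paper: apply the identity (\ref{eqn:dH(Y)}) with the Liouville field $X^{\dagger}$ from (\ref{item:H1}) to get $|\eta|\lesssim(\mathfrak{a}+\varepsilon\,c_{1}\|v\|_{L^{2}})/(c_{2}\|v\|_{L^{2}}^{2})$, then feed this into $\|\partial_{t}v\|_{L^{2}}\leq\varepsilon+|\eta|\,\|\nabla H_{s}(v)\|_{L^{2}}$ via (\ref{eqn:nablaH}), and you correctly isolate the one genuine subtlety, namely the non-vanishing floor $\varepsilon c_{1}M/c_{2}$ in $|\eta|\cdot\|v\|_{L^{2}}$, which is exactly why the paper sets $\varepsilon_{1}(\delta)=\tfrac{\delta}{2}\min\{\tfrac{c_{2}}{2Mc_{1}},1\}$ (the paper fixes $\varepsilon_{1}$ first and then $\textsl{v}_{1}$, but your reversed order is equally valid). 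One cosmetic point: by bounding the $\varepsilon|\eta|$ pairing term with $\varepsilon\mathfrak{y}$ rather than absorbing it into the left-hand side as $|\eta|(c_{2}\|v\|_{L^{2}}^{2}-c_{3}-\varepsilon)$, your constants acquire a (harmless, and easily removable) dependence on $\mathfrak{y}$ that the lemma's statement excludes.
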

\begin{proof}
Let $X^{\dagger}$ be the Liouville vector field given by Property (\ref{item:H1}) and Lemma \ref{lem:open} for all\linebreak $H_{s} \in \mathsf{H}+\mathscr{O}(\mathsf{H})$.
If we now apply (\ref{eqn:dH(Y)}) to $X^{\dagger}$, we obtain
$$
|\mathcal{A}^{H_{s}}(v,\eta)|+\|\nabla \mathcal{A}^{H_{s}}(v,\eta)\|_{L^{2}\times \mathbb{R}}(c_{1}(\|v\|_{L^{2}}+1)+|\eta|)  \geq |\eta|(c_{2}\|v\|^{2}_{L^{2}}-c_{3}).
$$
Therefore for $(v,\eta) \in \mathcal{B}^{\Gamma_{J_{0}}}(\mathfrak{a},\mathfrak{y},\varepsilon)$ with $\|v\|_{L^{2}}$ large enough, that is
$$
\|v\|_{L^{2}}>\sqrt{\frac{\varepsilon+c_{3}}{c_{2}}},
$$
the following inequality holds 
$$
\frac{\mathfrak{a}+c_{1}\varepsilon(\|v\|_{L^{2}}+1)}{c_{2}\|v\|^{2}_{L^{2}}-c_{3}-\varepsilon} \geq |\eta|.
$$
By (\ref{eqn:nablaH}) for $(v,\eta) \in \mathcal{B}^{\Gamma_{J_{0}}}(\mathfrak{a},\mathfrak{y},\varepsilon)$ one also has
$$
\|\partial_{t}v\|_{L^{2}}  \leq \|\partial_{t}v-\eta X^{H_{s}}(v)\|_{L^{2}}+|\eta|\|\nabla H_{s}(v)\|_{L^{2}}
 \leq \varepsilon + \frac{\mathfrak{a}+\varepsilon(c_{1}\|v\|_{L^{2}}+1)}{c_{2}\|v\|_{L^{2}}^{2}-c_{3}-\varepsilon}(h_{1}+M \|v\|_{L^{2}}).
$$
Note that this bound does not depend on $s$ any more.
If we define 
$$
\varepsilon_{1}(\delta):= \frac{\delta}{2} \min\{\frac{c_{2}}{2 M c_{1}},1\},
$$
then we have
\begin{align*}
\|\partial_{t}v\|_{L^{2}} & \leq \varepsilon_{1}(\delta)+\frac{\mathfrak{a}+\varepsilon_{1}(\delta) c_{1}(\|v\|_{L^{2}}+1)}{c_{2}\|v\|_{L^{2}}^{2}-c_{3}-\varepsilon_{1}(\delta)}(h_{1}+M \|v\|_{L^{2}})\\
& \leq \frac{\delta}{2}+\frac{\mathfrak{a}+\frac{\delta c_{2}}{4 M}(\|v\|_{L^{2}}+1)}{c_{2}\|v\|_{L^{2}}^{2}-c_{3}-\delta\frac{c_{2}}{4 M c_{1}}}(h_{1}+M \|v\|_{L^{2}}).
\end{align*}
If we analyze the right-hand side, we see that as $\|v\|_{L^{2}}\to \infty$,
$$
\lim_{\|v\|_{L^{2}}\to\infty}\left(\frac{\delta}{2}+\frac{\mathfrak{a}+\frac{\delta c_{2}}{4 M}(\|v\|_{L^{2}}+1)}{c_{2}\|v\|_{L^{2}}^{2}-c_{3}-\delta\frac{c_{2}}{4 M c_{1}}}(h_{1}+M \|v\|_{L^{2}})\right)=\frac{3}{4}\delta.
$$
Moreover, 
$$
\frac{\mathfrak{a}+\frac{\delta c_{2}}{4 M }(\|v\|_{L^{2}}+1)}{c_{2}\|v\|_{L^{2}}^{2}-c_{3}-\delta\frac{c_{2}}{4 M c_{1}}}  \geq |\eta|,\qquad
\textrm{and}\qquad\lim_{\|v\|_{L^{2}}\to\infty}\frac{\mathfrak{a}+\frac{\delta c_{2}}{4 M}(\|v\|_{L^{2}}+1)}{c_{2}\|v\|_{L^{2}}^{2}-c_{3}-\delta\frac{c_{2}}{4 M c_{1}}}  = 0.
$$
Therefore there exists $\textsl{v}_{1}(\delta)>0$, such that if $\|v\|_{L^{2}}\geq \textsl{v}_{1}(\delta)$ then the following inequalities both hold:
\begin{align*}
|\eta| & \leq \frac{\mathfrak{a}+\frac{\delta c_{2}}{4 M }(\|v\|_{L^{2}}+1)}{c_{2}\|v\|_{L^{2}}^{2}-c_{3}-\delta\frac{c_{2}}{4 M c_{1}}} \leq \delta, \\
\|v\|_{L^{2}} & \leq \frac{\delta}{2}+\frac{\mathfrak{a}+\frac{\delta c_{2}}{4 M}(\|v\|_{L^{2}}+1)}{c_{2}\|v\|_{L^{2}}^{2}-c_{3}-\delta\frac{c_{2}}{4 M c_{1}}}(h_{1}+M \|v\|_{L^{2}}) \leq \delta.
\end{align*}
In other words, for every $\delta>0$ there exists $\textsl{v}_{1}(\delta)>0$ depending only on $\delta,\mathfrak{a}$ and the set $\mathsf{H}+\mathcal{O}(\mathsf{H})$, such that whenever $(v,\eta)\in\mathcal{B}^{\Gamma_{J_{0}}}(\mathfrak{a},\mathfrak{y},\varepsilon_{1}(\delta))$ and $\|v\|_{L^{2}}\geq \textsl{v}_{1}(\delta)$, then 
$$
|\eta| \leq \delta\qquad \textrm{and} \qquad \|\partial_{t}v\|_{L^2}\leq \delta.
$$
\end{proof}
Now we are ready to prove Proposition \ref{prop:partition}:

\vspace{0.25cm}
\textbf{Proof of Proposition \ref{prop:partition}:}
In our proof we will use a result from Lemma \ref{lem:banan}, which states that a Hamiltonian $\mathsf{H}$ satisfying (\ref{item:H1}) and (\ref{item:H3}), there exists $\mu(\frac{\delta}{4},\mathsf{H})>0$, such that 
$$
H^{-1}(-\mu,\mu)\subseteq \left\lbrace x\in\mathbb{R}^{2n}\ \Big|\ \dist(x,H^{-1}(0))<\frac{\delta}{4}\right\rbrace.
$$
The proof of Lemma \ref{lem:banan} can be found in \ref{ssec:GeomHam}, since it relates more to the geometrical properties of the Hamiltonians.

Now take $\varepsilon_{0}>0$ as in Lemma \ref{lem:tilde}, $\textsl{v}_{1}(\frac{\delta}{4}), \varepsilon_{1}(\frac{\delta}{4})>0$ as in Lemma \ref{lem:nbhd_far} and $\mu(\frac{\delta}{4},\mathsf{H})>0$ as in Lemma \ref{lem:banan}. Then we claim that the statement of the Proposition holds for $\varepsilon_{2}(\delta,\|J\|_{L^{\infty}})$, $\textsl{v}_{2}(\delta,r)>0$ defined as below
\begin{align*}
\textsl{v}_{2}(\delta,r) & :=\max\Big\{\textsl{v}_{1}\left(\frac{\delta}{4}\right),r+\frac{1}{4}\delta\Big\},\\
\varepsilon_{2}(\delta, \|J\|_{L^{\infty}})& :=\frac{\varepsilon_{2}(\delta)}{\|J\|_{L^{\infty}}},\\
\varepsilon_{2}(\delta) & :=\min\left\lbrace\varepsilon_{0},\varepsilon_{1}\Big(\frac{\delta}{4}\Big), \mu\Big(\frac{\delta}{4},\mathsf{H}\Big) \right\rbrace.
\end{align*}

Note that for every $J\in \mathscr{J}(\mathbb{R}^{2n},\omega_{0})$ we have
$$
\|J\|_{L^{\infty}}^{-1}\|\nabla \mathcal{A}^{H}(v,\eta)\|_{L^{2}\times \mathbb{R}}\leq \|\nabla^{J} \mathcal{A}^{H}(v,\eta)\|_{g_{J}}
$$
Now if we denote $\Gamma_{J_{0}}:=\{H_{s},J_{0}\}_{s\in\mathbb{R}}$, then the above inequality implies 
$$
\mathcal{B}^{\Gamma}\left(\mathfrak{a},\mathfrak{y},\varepsilon_{2}(\delta, \|J\|_{L^{\infty}})\right)\subseteq 
\mathcal{B}^{\Gamma_{J_{0}}}(\mathfrak{a},\mathfrak{y},\varepsilon_{2}(\delta)).
$$
Therefore, without loss of generality, one can assume the $J_{s}$ structures to be constant and equal everywhere to $J_{0}$.

\textbf{Proof of 1:}\\
Fix $\varepsilon<\varepsilon_{2}(\delta)$ and take $(v,\eta)\in\mathcal{B}^{\Gamma_{J_{0}}}(\mathfrak{a},\mathfrak{y},\varepsilon)$, such that $\|v\|_{L^{2}}\geq \textsl{v}_{2}(r,\delta)$. 

For every $t_{0}\in S^{1}$ one has
\begin{align*}
\|v-v(t_{0})\|_{L^{2}} & \leq = \left( \int_{0}^{1}\Big\|\int_{t_{0}}^{t}\partial_{t}v(\tau)d\tau\Big\|^{2}dt \right)^{\frac{1}{2}}\\
& \leq \|\partial_{t}v\|_{L^{2}}
\end{align*}
Since $\varepsilon<\varepsilon_{2}(\delta) \leq \varepsilon_{1}\left(\frac{\delta}{4}\right)$ and $\|v\|_{L^{2}} \geq \textsl{v}_{2}(r,\delta)\geq \textsl{v}_{1}(\frac{\delta}{4}) $
so by Lemma \ref{lem:nbhd_far}.
$$
 \frac{\delta}{4} \geq \|\partial_{t}v\|_{L^{2}} \geq \|v-v(t_{0})\|_{L^{2}}.
$$
Moreover taking into account that $\|v\|_{L^{2}} \geq \textsl{v}_{2}(r,\delta)\geq r+\frac{\delta}{4}$ we obtain the bound 
$$
\|v(t_{0})\| \geq \|v\|_{L^{2}}-\|v-v(t_{0})\|_{L^{2}} \geq r +\frac{\delta}{4} - \frac{\delta}{4}  = r.
$$
Since we chose $r>\sup_{x\in K}\|x\|$, it follows that
$$
(v,\eta)\in\mathcal{B}^{\Gamma_{J_{0}}}(\mathfrak{a},\mathfrak{y},\varepsilon)\quad \& \quad \|v\|_{L^{2}}\geq \textsl{v}_{2}(\delta) \quad \Longrightarrow \quad v(t) \notin K \quad \forall\ t\in S^{1}.
$$
This implies that for all $s \in [0,1],\ H_{s}(v(t))=\mathsf{H}(v(t))$. Together with Lemma \ref{lem:nbhd_close}, this gives us that there exists $t_{0}\in[0,1]$ such that
$$
v(t_{0})\in \mathsf{H}^{-1}(-\varepsilon_{2}(\delta),\varepsilon_{2}(\delta)).
$$
Since
$\varepsilon<\varepsilon_{2}(\delta)\leq \mu\left(\frac{\delta}{4},\mathsf{H}\right)$,
then by Lemma \ref{lem:banan} there exists $t_{0}\in[0,1]$ such that
$$
\dist(v(t_{0}),\mathsf{H}^{-1}(0))<\frac{\delta}{4}.
$$
Similarly, one can we use Lemma \ref{lem:nbhd_far}. to estimate the distance of $(v,\eta)\in \mathbb{B}^{\Gamma_{J_{0}}}(\mathfrak{a},\mathfrak{y},\varepsilon)$ from $\Sigma$ we have
\begin{align*}
\dist_{L^{2}}(v,\Sigma) & \leq \dist(v(t_{0}),\mathsf{H}^{-1}(0))+\|v(t_{0})-v\|_{L^{2}}\\
& \leq \dist(v(t_{0}),\mathsf{H}^{-1}(0))+\|\partial_{t}v\|_{L^{2}}\\
& \leq \frac{\delta}{4}+\frac{\delta}{4} =\frac{\delta}{2}.
\end{align*}
By Lemma \ref{lem:nbhd_far}. we have also $|\eta|\leq \frac{\delta}{4}$. Therefore, for $(v,\eta)\in\mathcal{B}^{\Gamma_{J_{0}}}(\mathfrak{a},\mathfrak{y},\varepsilon_{2}(\delta))$ and $\|v\|_{L^{2}}\geq \textsl{v}_{2}(r,\delta)$ one has
$$
\dist_{W^{1,2}\times \mathbb{R}}((v,\eta),\Sigma\times\{0\}) \leq \dist_{L^{2}}(v,\Sigma)+\|\partial_{t}v\|_{L^{2}}+|\eta| \leq \delta.
$$
\textbf{Proof of 2:}\\
Take $(v,\eta)\in\mathcal{B}^{\Gamma_{J_{0}}}(\mathfrak{a},\mathfrak{y},\varepsilon_{2}(\delta))$ and $\|v\|_{L^{2}}\leq \textsl{v}_{2}(r,\delta)$. By assumption $|\eta|\leq \mathfrak{y}$ so we have the following estimate:
\begin{align*}
\|\partial_{t}v\|_{L^{2}} & \leq \|\partial_{t}v-\eta X^{H_{s}}\|_{L^{2}}+|\eta|\|\nabla H_{s}(v)\|_{L^{2}}\\
& \leq \varepsilon_{2}(\delta) + \mathfrak{y}(h_{1}+M\|v\|_{L^{2}})\\
& \leq \varepsilon_{2}(\delta) + \mathfrak{y}(h_{1}+M\textsl{v}_{2}(r,\delta)),
\end{align*}
where the second inequality follows from (\ref{eqn:nablaH}) and the fact that $H_{s}\in \mathsf{H}+\mathcal{O}(\mathsf{H})$.

As a result $\|(v,\eta)\|_{W^{1,2}\times\mathbb{R}}$ is uniformly bounded as claimed.

\hfill $\square$
\vspace{0.25cm}

Observe that the bounds and constants in the proposition above do not depend on the choice of the homotopy $\Gamma$, but on the fact that $H_{s}\in \mathsf{H}+\mathscr{O}(\mathsf{H})$. Only $\varepsilon_{2}(\delta,\|J\|_{L^{\infty}})$ depends on $\|J\|_{L^{\infty}}$ explicitly and the bounds on $\|\partial_{s}v\|_{L^{2}}$ in proof of (2) depend on $\mathfrak{y}$, which through Proposition \ref{prop:bounds1} depends on $\|J\|_{L^{\infty}}$, but both of these quantities depend on $\|J\|_{L^{\infty}}$ continuously.

For every $\mathfrak{a},\mathfrak{y},\delta>0$ we define a subset of $C^{\infty}(S^{1}, \mathbb{R}^{2n})$ bounded in $W^{1,2}\times \mathbb{R}$ norm in the following way:
\begin{equation}
K_{\delta}^{1}:= \left\lbrace \begin{array}{c|c}
 & |\eta|\leq \mathfrak{y}\\
(v,\eta)\in C^{\infty}(S^{1},\mathbb{R}^{2n})\times \mathbb{R} & \|v\|_{L^{2}}\leq \max\{\textsl{v}_{2},\textsl{v}_{3}\}\\
& \|\partial_{t}v\|_{L^{2}}\leq \varepsilon_{2}+\mathfrak{y}(h_{1}+M\max\{\textsl{v}_{2},\textsl{v}_{3}\})
\end{array}\right\rbrace
\label{eqn:K1}
\end{equation}
where $\varepsilon_{2}(\delta,\|J\|_{L^{\infty}})>0$ and $\textsl{v}_{2}(\delta,\max_{K\cup\mathcal{V}}\|x\|)<+\infty$ are defined as in   
Proposition \ref{prop:partition} and $\textsl{v}_{3}$ is as in (\ref{eqn:v3}). Then by Proposition \ref{prop:partition}, for every $\varepsilon< \varepsilon_{2}(\delta,\|J\|_{L^{\infty}})$ one has the following splitting:
$$
\mathcal{B}^{\Gamma}(\mathfrak{a},\mathfrak{y},\varepsilon)  \subseteq K_{\delta}^{1}\cup \mathcal{U}_{\delta}^{1}.
$$
In particular, in the setting of Theorem \ref{twr:ModuliCompact}, every Floer trajectory from $\mathscr{M}^{\Gamma}(\Lambda_{0},\Lambda_{1})$ starts in $ K_{\delta}^{1}$, i.e. by Lemma \ref{lem:ends}
$$
\mathscr{C}(\mathcal{A}^{H_{0}},N)  \cap (\mathcal{A}^{H_{0}})^{-1}([a,\max\{2 b, 1\}]\setminus \{0\}) \subseteq K_{\delta}^{1}.
$$
\section{Floer trajectories near the critical set}
\label{sec:critMfld}
Let us put ourselves in the setting of Theorem \ref{twr:ModuliCompact}. Having localized $\mathcal{B}^{\Gamma}(\mathfrak{a},\mathfrak{y},\varepsilon)$ we would now like to establish global $L^{2}$ bounds on $\mathscr{M}^{\Gamma}(\Lambda_{0},\Lambda_{1})$. However, to establish the $L^{2}$ bounds on the $v$ component of a Floer trajectory one has to analyze the Floer trajectory not only outside of the set of infinitesimal action derivation, but one has to also estimate how far it travels within $\mathcal{B}^{\Gamma}(\mathfrak{a},\mathfrak{y},\varepsilon)$, along the hypersurface $\Sigma \times\{0\}$.
From Proposition \ref{prop:partition}, we know that for every $\mathfrak{a},\mathfrak{y},\delta>0$ there exists $\varepsilon>0$ and a set $K_{\delta}^{1} \subseteq C^{\infty}(S^{1},\mathbb{R}^{2n})\times \mathbb{R}$, bounded in $W^{1,2}\times\mathbb{R}$  norm, such that
$$
\mathcal{B}^{\Gamma}(\mathfrak{a},\mathfrak{y},\varepsilon)\subseteq K_{\delta}^{1}\cup \mathcal{U}_{\delta}^{1}.
$$
Since $\mathcal{U}_{\delta}^{1}$ (defined in (\ref{eqn:U1})) is non-compact, we will have to find a way to ensure that the Floer trajectories don't escape to infinity within this set. 

Let us now introduce a set defined analogically to $\mathcal{U}^{1}_{\delta}$, namely
\begin{equation}
\mathcal{U}_{\delta}^{0} := \{x\in C^{\infty}(S^{1}, \mathbb{R}^{2n})\times \mathbb{R}\ |\ \dist_{L^{2}\times \mathbb{R}}((v,\eta),\Sigma\times\{0\})<\delta\}.\label{eqn:U0}
\end{equation}
In Proposition \ref{prop:boundP} we will show that due to the Morse-Bott property of the action functional along $\Sigma\times\{0\}$, there exists a $\delta>0$ such that the Floer trajectories in $\mathcal{U}_{\delta}^{0}$ cannot escape along the unbounded component of the critical set.  In fact, as we will prove in this section, the tangential component of the Floer trajectories along the critical submanifold can be estimated by the energy growth. Naturally, for every $\delta>0$, the following inclusion holds
$$
\mathcal{B}^{\Gamma}(\mathfrak{a},\mathfrak{y},\varepsilon)\subseteq K_{\delta}^{1}\cup\mathcal{U}_{\delta}^{0},
$$
thus ensuring that the Floer trajectories don't escape within $\mathcal{B}^{\Gamma}(\mathfrak{a},\mathfrak{y},\varepsilon)$.

To make this assertion more precise and to prove it, we will view $\mathcal{U}_{\delta}^{0}$ as a tubular neighborhood of $\Sigma\times\{0\}$ in $C^{\infty}(S^{1},\mathbb{R}^{2n})\times \mathbb{R}$ with respect to the metric induced by $J_{0}$. Next, we will define a projection $P$ of $\mathcal{U}_{\delta}^{0}$ onto $\Sigma\times\{0\}$ and then analyze the image of the Floer trajectories under this projection. For this analysis we will need the Taylor expansion of $\nabla^{J_{0}}\mathcal{A}^{H}$ in this tubular neighborhood with respect to the points on $\Sigma\times\{0\}$, which is the subject of the following subsections.

However, to simplify our computations we would like the Hamiltonian to be constant and the almost complex structure to be equal to $J_{0}$. Therefore, we restrict ourselves to the analysis of the Floer trajectories outside the interval $[0,1]$.  Whenever $s\notin (0,1)$ then $(H_{s},J_{s})$ is either \linebreak $(H_{0},\{J_{0,t}(\cdot,\eta)\}_{(t,\eta)\in S^{1}\times\mathbb{R}})$ or $(H_{1},\{J_{1,t}(\cdot,\eta)\}_{(t,\eta)\in S^{1}\times\mathbb{R}})$. Moreover, recall that we chose our almost complex structures for all $s\in \mathbb{R}$
$$
J_{s}=\{J_{s,t}(\cdot,\eta)\}_{S^{1}\times\mathbb{R}}\in \mathcal{J}\Big(\mathbb{R}^{2n},\omega_{0},\mathcal{V}\times\big((-\infty,-\textrm{\etat})\cup( \textrm{\etat}, \infty)\big)\Big),
$$
to be constant and equal to $J_{0}$ outside the open set $\mathcal{V}\times((-\infty,-\textrm{\etat})\cup( \textrm{\etat}, \infty))$. Therefore for any $\delta \in (0,$\etat$)$ the almost complex structures $J_{s}$ are constant and equal $J_{0}$ in $\mathcal{U}_{\delta}^{0}$. In other words
$$
J_{s,t}(v(t),\eta)=J_{0},\quad \forall\ (v,\eta)\in \mathcal{U}_{\delta}^{0},\qquad\forall\ t\in S^{1},\qquad \forall\ s\in \mathbb{R}.
$$
Therefore, throughout this section we assume $s\notin (0,1)$ and take $(H,J)=(H_{0},J_{0})$ or $(H, J)=(H_{1},J_{0})$, which will simplify the setting and calculations significantly. The importance of choosing $J_{0}$ will become apparent in the proof of Lemma \ref{lem:Taylor}.


\subsection{Tubular neighborhood and projection onto the critical set}
\label{ssec:TubN}
The aim of this subsection is to show how for small enough $\delta$ the set $\mathcal{U}_{\delta}^{0}$ can be described as the $\delta$-tubular neighborhood of $\Sigma\times{0}$ with respect to the metric $g_{J_{0}}$ induced by the canonical almost complex structure $J_{0}$. This means that for $\delta>0$ sufficiently small there exists a diffeomorphism $\Phi$ from a $\delta$-disc subbundle in the normal bundle of $\Sigma\times {0}$ onto $\mathcal{U}^0_\delta$. In the non-compact case, the existence of such a tubular neighborhood is not obvious and depends on the geometry of $\Sigma$. More precisely, due to the linearity of the exponential map corresponding to $g_{J_{0}}$ and the fact that the normal bundle of $\Sigma$ in $\mathbb{R}^{2n}$ can be naturally identified with a closed subspace of $N(\Sigma\times\{0\})$, a $\delta$-tubular neighborhood of $\Sigma\times{0}$ exists if and only if there exists a $\delta$-tubular neighborhood of $\Sigma$ in $\mathbb{R}^{2n}$. The existence of a $\delta$-tubular neighborhood of a $0$-level set of an admissible Hamiltonian is proven in Lemma \ref{lem:TubN}. Such a tubular neighborhood comes equipped with a projection $P: \mathcal{U}^0_\delta \rightarrow \Sigma\times{0}$, such that
$$
P \circ \Phi ((v,0), (\xi,\sigma)) = (v,0), \qquad \forall\ ((v,0),(\xi,\sigma))\in N^{\delta}(\Sigma \times \{0\}).
$$
Let us relate such defined projection $P$ to the parallel transport with respect to the metric $g_{J_{0}}$. Observe that, due to the absence of curvature for $g_{J_{0}}$ and the linear structure of $C^{\infty}(S^{1},\mathbb{R}^{2n})\times \mathbb{R}$, the parallel transport along geodesics can be viewed as an identity isomorphism under the natural identification of $C^{\infty}(S^{1},v_{1}^{*}T\mathbb{R}^{2n})\times \mathbb{R} $ with $ C^{\infty}(S^{1},v_{2}^{*}T\mathbb{R}^{2n})\times \mathbb{R}$,
\begin{align*}
Pt^{\gamma}: C^{\infty}(S^{1},v_{1}^{*}T\mathbb{R}^{2n})\times \mathbb{R} &\to  C^{\infty}(S^{1},v_{2}^{*}T\mathbb{R}^{2n})\times \mathbb{R},\\
((v_{1}(t),\xi(t)),\sigma) & \mapsto ((v_{2}(t),\xi(t)),\sigma).
\end{align*}
As a result, for every $(v,\eta)\in \mathcal{U}_{\delta}^{0}$  we have the following inclusion
\begin{equation}
N_{P(v,\eta)}(\Sigma \times \{0\})\subseteq \Ker (dP_{(v,\eta)}\circ Pt^{\gamma}_{P(v,\eta)}), \label{eqn:NsubKerdP}
\end{equation}
where $Pt^{\gamma}$ is the parallel transport between $P(v,\eta)$ and $(v,\eta)$. 


\subsection{Taylor expansion of the action functional}
In the next subsection we will analyze the behavior of Floer trajectories in the neighborhood $\mathcal{U}_{\delta}^{0}$ of the critical set $\Sigma\times\{0\}$. In particular, since $\mathcal{U}_{\delta}^{0}$ has a structure of a tubular neighborhood, we will analyze the image of the Floer trajectories under the projection $P$. In this analysis we will use the Taylor expansion of $\nabla^{J_{0}}\mathcal{A}^{H}(v,\eta)$ for $(v,\eta)\in \mathcal{U}_{\delta}^{0}$ with respect to $P(v,\eta)$. 

Following Lang in \cite{lang1999}, we define the Taylor expansion of $\nabla\mathcal{A}^{H}(v,\eta)$ for $(v,\eta)$ with respect to $P(v,\eta)$ and formulate its convergence as follows
\begin{align*}
\nabla \mathcal{A}^{H}(v,\eta) & = Pt^{\gamma}_{(P(v,\eta))}\big( \sum_{k=0}^{m} \frac{1}{k!}(D_{\gamma'})^{k}\nabla\mathcal{A}^{H}(P(v,\eta)) \big) + \mathcal{O}(\Phi^{-1}(v,\eta))\\
\textrm{where} &\qquad \lim_{\|\Phi^{-1}(\eta,\nabla)\|_{g_{J_{0}}}\to 0}\frac{\|\mathcal{O}(\Phi^{-1}(v,\eta))\|_{g_{J_{0}}}}{\|\Phi^{-1}(v,\eta)\|_{g_{J_{0}}}^{m+1}} <+\infty,
\end{align*}
where $D_{\gamma'}$ stands for the covariant derivative along $\gamma$.

In \cite{lang1999} Lang proves the local convergence of the Taylor series in the general setting of infinite dimensional Riemannian manifolds. However, for further purposes, we are only interested in the Taylor expansion of $\nabla \mathcal{A}^{H}(v,\eta)$ up to the first order, but we would like it to be uniformly convergent to the parallel transport of its Taylor expansion at $P(v,\eta)$ on the whole $\mathcal{U}_{\delta}^{0}$. In other words we would like to prove the following lemma:
\begin{lem}
\label{lem:Taylor}
Assume that $\delta$ is small enough for $\mathcal{U}_{\delta}^{0}$ to have a structure of a tubular neighbourhood and assume $\sup_{\mathbb{R}^{2n}}\|\Hess H\| <M$. Then for all $(v,\eta)\in \mathcal{U}_{\delta}^{0}$ the following holds
\begin{align*}
\nabla \mathcal{A}^{H}(v,\eta) & = Pt^{\gamma}\big( \nabla \mathcal{A}^{H}(P(v,\eta))+ \nabla^{2}_{P(v,\eta)}\mathcal{A}^{H}(\Phi^{-1}(v,\eta)) \big) + \mathcal{O}(\Phi^{-1}(v,\eta)),\\
\textrm{where} &\qquad \|\mathcal{O}(\Phi^{-1}(v,\eta))\|_{L^{2}\times\mathbb{R}}\leq \frac{1}{2}M\|\Phi^{-1}(v,\eta)\|_{L^{2}\times\mathbb{R}}^{2}.
\end{align*}
\end{lem}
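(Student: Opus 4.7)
\medskip

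\noindent\textbf{Proof proposal.} The plan is to exploit the fact that the metric $g_{J_0}$ induced by the canonical complex structure on $C^{\infty}(S^1,\mathbb{R}^{2n})\times\mathbb{R}$ is flat: geodesics are affine lines in the linear structure, the exponential map at $(w,0)=P(v,\eta)$ sends a normal vector $(\xi,\sigma)$ to the loop $t\mapsto(w+\xi(t),\sigma)$, and the parallel transport $Pt^{\gamma}$ along such a geodesic coincides with the natural identification $C^{\infty}(S^1,(w,0)^{*}T(\mathbb{R}^{2n}\times\mathbb{R}))\cong C^{\infty}(S^1,(v,\eta)^{*}T(\mathbb{R}^{2n}\times\mathbb{R}))$ already used in the previous subsection. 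In particular the covariant derivative $D_{\gamma'}$ coincides with the ordinary directional derivative in the linear structure, so Lang's Taylor expansion for $\nabla\mathcal{A}^H$ reduces to the usual second-order Taylor expansion in the Banach space $L^{2}(S^1,\mathbb{R}^{2n})\times\mathbb{R}$, and the problem of controlling the remainder globally on $\mathcal{U}_{\delta}^{0}$ becomes an explicit integral estimate.

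Concretely, set $(v,\eta)=(w+\xi,\sigma)$ with $(w,0)\in\Sigma\times\{0\}$ and $(\xi,\sigma)=\Phi^{-1}(v,\eta)$. Using the formula
\[
\nabla\mathcal{A}^{H}(v,\eta)=\bigl(-J_0(\partial_t v-\eta X^{H}(v)),\,-\textstyle\int_{S^1} H(v)\,dt\bigr)
\]
and the fact that $w$ is a constant loop with $w\in H^{-1}(0)$, a direct computation shows that $\nabla\mathcal{A}^H(w,0)=0$ and that the Hessian at $(w,0)$, acting on $(\xi,\sigma)$, is
\[
\nabla^{2}_{(w,0)}\mathcal{A}^{H}(\xi,\sigma)=\bigl(-J_0\partial_t\xi+\sigma J_0 X^{H}(w),\,-\textstyle\int_{S^1}\langle\nabla H(w),\xi\rangle\,dt\bigr).
\]
Subtracting from $\nabla\mathcal{A}^{H}(w+\xi,\sigma)$ gives the remainder
\[
\mathcal{O}(\Phi^{-1}(v,\eta))=\Bigl(\sigma J_{0}\bigl[X^{H}(w+\xi)-X^{H}(w)\bigr],\,-\textstyle\int_{S^1}\bigl[H(w+\xi)-H(w)-\langle\nabla H(w),\xi\rangle\bigr]\,dt\Bigr),
\]
and the first step is to observe, using the identification from Subsection~\ref{ssec:TubN}, that this is exactly $\nabla\mathcal{A}^{H}(v,\eta)-Pt^{\gamma}\bigl(\nabla\mathcal{A}^{H}(P(v,\eta))+\nabla^{2}_{P(v,\eta)}\mathcal{A}^{H}(\Phi^{-1}(v,\eta))\bigr)$.

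It then remains to estimate the $L^{2}\times\mathbb{R}$ norm of this explicit remainder. The hypothesis $\sup_{\mathbb{R}^{2n}}\|\mathrm{Hess}\,H\|\leq M$ gives $\|X^{H}(w+\xi)-X^{H}(w)\|\leq M\|\xi\|$ pointwise (since $X^{H}=J_{0}\nabla H$ and $\|J_{0}\|=1$), and Taylor's theorem with integral remainder gives $|H(w+\xi)-H(w)-\langle\nabla H(w),\xi\rangle|\leq\tfrac{M}{2}\|\xi\|^{2}$ pointwise. Integrating over $S^{1}$ yields
\[
\|\mathcal{O}(\Phi^{-1}(v,\eta))\|_{L^{2}\times\mathbb{R}}^{2}\leq M^{2}\sigma^{2}\|\xi\|_{L^{2}}^{2}+\tfrac{M^{2}}{4}\|\xi\|_{L^{2}}^{4},
\]
and combining the two terms via the elementary inequality $4ab\leq(a+b)^{2}$ with $a=\|\xi\|_{L^{2}}^{2},\ b=\sigma^{2}$ bounds the right-hand side by $\tfrac{M^{2}}{4}\bigl(\|\xi\|_{L^{2}}^{2}+\sigma^{2}\bigr)^{2}=\tfrac{M^{2}}{4}\|\Phi^{-1}(v,\eta)\|_{L^{2}\times\mathbb{R}}^{4}$, which gives the claimed bound.

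The only real subtlety is bookkeeping: one must confirm that the geometric objects defined in Subsection~\ref{ssec:TubN} (the tubular diffeomorphism $\Phi$, the projection $P$, the parallel transport $Pt^{\gamma}$) really do collapse under flatness to the naive linear identifications used in the computation above, so that the abstract Lang-type expansion becomes the explicit polynomial expansion in $(\xi,\sigma)$. Once this identification is secured, the remaining work is the routine bound that I sketched, and the constant $\tfrac{1}{2}M$ in the statement falls out of the Hessian bound together with the elementary AM--GM step.
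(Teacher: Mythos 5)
Your proposal follows essentially the same route as the paper: use the flatness of $g_{J_{0}}$ to reduce Lang's expansion to the ordinary Taylor expansion in the linear structure, observe that $\nabla\mathcal{A}^{H}(P(v,\eta))=0$ and that the first-order terms $\partial_{t}v$ and $\partial_{t}\xi$ cancel, identify the remainder as the explicit pair $\bigl(\eta J_{0}(X^{H}(v)-X^{H}(\bar v)),\,-\int(H(v)-dH_{\bar v}(\xi))\bigr)$, and bound the two components by $M|\eta|\,\|\xi\|_{L^{2}}$ and $\tfrac{1}{2}M\|\xi\|_{L^{2}}^{2}$ respectively. All of that matches the paper's proof.

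The one step that fails as written is the final combination of the two components. You take the $\ell^{2}$ product norm on $L^{2}\times\mathbb{R}$ and claim
$M^{2}\sigma^{2}\|\xi\|_{L^{2}}^{2}+\tfrac{M^{2}}{4}\|\xi\|_{L^{2}}^{4}\leq\tfrac{M^{2}}{4}\bigl(\|\xi\|_{L^{2}}^{2}+\sigma^{2}\bigr)^{2}$
via $4ab\leq(a+b)^{2}$. With $a=\|\xi\|_{L^{2}}^{2}$, $b=\sigma^{2}$ the left side is $M^{2}ab+\tfrac{M^{2}}{4}a^{2}$ and the inequality reduces to $2ab\leq b^{2}$, which is false in general (take $a=b=1$: you get $\tfrac{5}{4}M^{2}\not\leq M^{2}$). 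So the constant $\tfrac{1}{2}M$ does not come out of your argument; you only get $\|\mathcal{O}\|\leq M\|\Phi^{-1}(v,\eta)\|^{2}$ in the $\ell^{2}$ norm. The paper avoids this by measuring the product norm as the \emph{sum} of the two components (this is the convention used throughout, e.g.\ in Lemma \ref{lem:P}), for which
$M|\eta|\,\|\xi\|_{L^{2}}+\tfrac{1}{2}M\|\xi\|_{L^{2}}^{2}\leq\tfrac{1}{2}M\bigl(\|\xi\|_{L^{2}}+|\eta|\bigr)^{2}$
holds exactly. Either switch to that norm or accept the larger constant (and propagate it through the thresholds $\|\Phi^{-1}(u)\|\leq\tfrac{1}{6M}$ used in Proposition \ref{prop:boundP}); otherwise the argument is complete.
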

Note that in particular, this holds whenever $H$ is admissible and $\delta$ as in Lemma \ref{lem:TubN}.

Before we proceed with the analysis of the Taylor expansion of $\nabla \mathcal{A}^{H}$, recall that the Hessian of $\mathcal{A}^{H}$ is a linear map
$$
\nabla^{2}_{(v,\eta)}\mathcal{A}^{H}:T_{v}C^{\infty}(S^{1},\mathbb{R}^{2n})\times \mathbb{R}\to T_{v}C^{\infty}(S^{1},\mathbb{R}^{2n})\times \mathbb{R}, 
$$
of the form
\begin{equation}
\nabla^{2}_{(v,\eta)}\mathcal{A}^{H}(\xi, \sigma) = 
\left( \begin{array}{c}
-J_{0}(\partial_{t}\xi-\sigma X^{H}(v))- \eta\ \Hess_{v} H(\xi)\\
 -\int dH(\xi) \end{array} \right). 
 \label{eqn:HessAH}
\end{equation}
\begin{proof}
Let us try to estimate the rest of Taylor expansion near the critical hypersurface $\Sigma\times\{0\}$. Fix $(v,\eta)\in \mathcal{U}_{\delta}^{0}$. Then there exists $(\xi,\sigma)\in N^{\delta}_{P(v,\eta)}(\Sigma\times\{0\})$ and $\bar{v}\in \Sigma$, such that
$$
\Phi^{-1}(v,\eta)=(P(v,\eta),(\xi,\sigma))=((\bar{v},0),(v-\bar{v},\eta)).
$$
Then we can estimate the rest of the Taylor expansion by
\begin{align*}
\mathcal{O}(\Phi^{-1}(v,\eta)) & =\nabla\mathcal{A}^{H}_{(v,\eta)}-Pt^{\gamma}(\nabla\mathcal{A}^{H}_{P(v,\eta)}+\nabla^{2}_{P(v,\eta)}\mathcal{A}^{H}(\xi,\eta))\\
& =\left( \begin{array}{c}
-J_{0}(\partial_{t}v-\eta X^{H}(v)) \\
-\int H(v) \end{array} \right)
-  
\left( \begin{array}{c}
-J_{0}(\partial_{t}\xi-\eta X^{H}(\bar{v}))\\
 -\int dH_{\bar{v}}(\xi) \end{array} \right)\\
& = \left( \begin{array}{c}
-J_{0}(\partial_{t}v-\eta X^{H}(v)-\partial_{t}(v-\bar{v})+\eta X^{H}(\bar{v}) ) \\
-\int (H(v)-dH_{\bar{v}}(\xi)) \end{array} \right)\\
& =\left( \begin{array}{c}
\eta J_{0}( X^{H}(v)- X^{H}(\bar{v}) ) \\
-\int (H(v)-dH_{\bar{v}}(\xi)) \end{array} \right).
\end{align*}
Since the Hessian of $H$ is uniformly bounded, we obtain
\begin{align*}
\|\eta J_{0}( X^{H}(v)- X^{H}(\bar{v}) )\|_{L^{2}} & \leq |\eta|\|\nabla H(v)-\nabla H(\bar{v})\|_{L^{2}}\\
& \leq |\eta|  \|v-\bar{v}\|_{L^{2}} \Big(\int \sup_{s\in[0,1]} \|\Hess_{\bar{v}+s(v(t)-\bar{v})} H\|^{2}dt\Big)^{\frac{1}{2}}\\
& \leq M |\eta|  \|\xi\|_{L^{2}}.
\end{align*}

Analogically using once more the Taylor expansion for $H$ at $\bar{v}$ we obtain
$$
\Big|\int (H(v(t))-dH_{\bar{v}}(v(t)-\bar{v}))dt\Big| \leq  \int \frac{1}{2}\sup_{s\in[0,1]} \|\Hess_{\bar{v}+s(v(t)-\bar{v})} H\| \|v(t)-\bar{v}\|^{2}dt \leq \frac{1}{2} M \|\xi\|_{L^{2}}^{2}.
$$
The two results combined give 
\begin{align*}
\|\mathcal{O}(\Phi^{-1}(v,\eta))\|_{L^{2}\times\mathbb{R}} & =\|\nabla\mathcal{A}^{H}_{(v,\eta)}-Pt^{\gamma}(\nabla\mathcal{A}^{H}_{P(v,\eta)}+\nabla^{2}_{P(v,\eta)}\mathcal{A}^{H}(\xi,\eta))\|_{L^{2}\times \mathbb{R}}\\
& \leq \frac{1}{2} M (\|\xi\|_{L^{2}}+|\eta|)^{2}.
\end{align*}
\end{proof}
Observe, that this way we have obtained a bound on $\mathcal{O}(\Phi^{-1}(v,\eta))$ by the $L^{2}\times \mathbb{R}$ distance from $(v,\eta)$ to $\Sigma\times \{0\}$. This is due to the fact that we chose the metric $J_{0}$ to construct the tubular neighborhood of $\Sigma\times \{0\}$ and to define the parallel transport and as a consequence the first derivatives of $v$ and $\xi$ canceled each other, leaving the estimate dependent on the $L^{2}\times \mathbb{R}$ rather than $W^{1,2}\times\mathbb{R}$ distance from $\Sigma\times \{0\}$.


\subsection{Properties of the Hessian of the action functional}
In this subsection we will prove some properties of the Hessian of the action functional both on $\Sigma\times\{0\}$ and in its tubular neighborhood. We will use these properties later in the analysis of the projection of the Floer trajectory and in the proof of Proposition \ref{prop:boundP}. The key property of the Rabinowitz action functional that will enable us to establish bounds on the Floer trajectory in the constructed tubular neighborhood is the Morse-Bott property of $\mathcal{A}^{H}$ at the hypersurface $\Sigma\times\{0\}$. This property has been proven by Cieliebak and Frauenfelder in \cite{CieliebakFrauenfelder2009} in Step 4 of the proof of Theorem B.1 and it states that for all $\bar{v}\in \Sigma,\ \Ker(\nabla^{2}_{(\bar{v},0)}\mathcal{A}^{H})=T_{\bar{v}}\Sigma \times\{0\}$. Even though Theorem B.1 is  stated for a setting different than ours, the proof shown in Step 4 can be applied to any Hamiltonian having $0$ as a regular value. 

\begin{lem}
Let $\mathcal{U}_{\delta}^{0}$ be a tubular neighborhood of $\Sigma\times\{0\}$ and let $P:\mathcal{U}_{\delta}^{0} \to \Sigma\times\{0\}$ be the associated projection. 
Then for all $(v,\eta)\in\mathcal{U}_{\delta}^{0}$
$$
\nabla^{2}_{P(v,\eta)}\mathcal{A}^{H}(\Phi^{-1}(v,\eta)) \in N_{P(v,\eta)}(\Sigma\times\{0\}).
$$
\label{lem:HessAinN}
\end{lem}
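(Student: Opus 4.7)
The plan is to deduce the lemma as an immediate consequence of two facts: the self-adjointness of $\nabla^{2}_{P(v,\eta)}\mathcal{A}^{H}$ with respect to the $L^{2}\times\mathbb{R}$ inner product induced by $g_{J_{0}}$, and the Morse--Bott identification
$$\ker \nabla^{2}_{(\bar v,0)}\mathcal{A}^{H} = T_{(\bar v,0)}(\Sigma\times\{0\})$$
for every $\bar v\in\Sigma$, which was established in Step~4 of the proof of Theorem~B.1 of \cite{CieliebakFrauenfelder2009} and, as the authors point out, depends only on $0$ being a regular value of $H$.

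First, I would record that $P(v,\eta) = (\bar v, 0)$ is a critical point of $\mathcal{A}^{H}$, so that $\nabla^{2}_{P(v,\eta)}\mathcal{A}^{H}$ arises as the Hessian of a smooth functional at a critical point and is therefore self-adjoint with respect to the $L^{2}\times\mathbb{R}$ metric. In our concrete setting this can be verified directly from the explicit formula (\ref{eqn:HessAH}) by integrating by parts in the $t$-variable and using $\omega_{0}(\cdot,\cdot)=\langle J_{0}\cdot,\cdot\rangle$ together with $\omega_{0}(X^{H},\cdot)=dH(\cdot)$, which shows that the mixed terms coupling the $\xi$ and $\sigma$ components of two test vectors match symmetrically.

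Given self-adjointness, for any tangential vector $(\xi',0)\in T_{P(v,\eta)}(\Sigma\times\{0\})$ and any $w$ in the tangent space at $P(v,\eta)$, the Morse--Bott property yields
\begin{equation*}
\bigl\langle \nabla^{2}_{P(v,\eta)}\mathcal{A}^{H}(w),\,(\xi',0)\bigr\rangle_{L^{2}\times\mathbb{R}} = \bigl\langle w,\,\nabla^{2}_{P(v,\eta)}\mathcal{A}^{H}(\xi',0)\bigr\rangle_{L^{2}\times\mathbb{R}} = 0,
\end{equation*}
so that $\nabla^{2}_{P(v,\eta)}\mathcal{A}^{H}(w) \in N_{P(v,\eta)}(\Sigma\times\{0\})$ for every $w$. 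Specialising to $w=\Phi^{-1}(v,\eta)$, which by the construction of $\Phi$ in Subsection~\ref{ssec:TubN} already represents a vector in $N_{P(v,\eta)}(\Sigma\times\{0\})$, then gives the claim. There is no serious obstacle in this argument: both ingredients are either standard or directly quoted, and the only small verification required is the self-adjointness of the operator in (\ref{eqn:HessAH}), which is routine.
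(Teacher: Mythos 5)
Your proof is correct, but it takes a genuinely different route from the paper's. The paper argues by direct computation: writing $\Phi^{-1}(v,\eta)=((\bar v,0),(v-\bar v,\eta))$ and plugging into the explicit formula (\ref{eqn:HessAH}) at the base point $(\bar v,0)$ (where the $\eta\,Hess_vH$ term drops out), it checks membership in $N_{P(v,\eta)}(\Sigma\times\{0\})$ against the characterization (\ref{eqn:NS0}) using only the two elementary facts $\int_0^1\partial_t\xi\,dt=0$ and $-J_0X^H(\bar v)=\nabla H(\bar v)$. You instead combine self-adjointness of the Hessian at the critical point with the inclusion $T_{P(v,\eta)}(\Sigma\times\{0\})\subseteq\ker\nabla^2_{P(v,\eta)}\mathcal{A}^H$, and conclude that the \emph{entire} image of the Hessian is $g_{J_0}$-orthogonal to the tangent space of the critical manifold, hence lies in the normal space as identified in (\ref{eqn:NS0}). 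Your argument is more conceptual and proves a slightly stronger statement (it applies to arbitrary $w$, not only to $w=\Phi^{-1}(v,\eta)\in N_{P(v,\eta)}(\Sigma\times\{0\})$); note also that you only need the inclusion of the tangent space in the kernel, which holds for any critical submanifold, rather than the full Morse--Bott equality from Step 4 of Theorem B.1 of \cite{CieliebakFrauenfelder2009}. The price is the self-adjointness verification you defer as routine -- it is indeed routine here (integration by parts in $t$, antisymmetry of $J_0$, and symmetry of the cross terms $\sigma\int dH(\xi)$), but writing it out is comparable in length to the paper's direct check, which is why the authors' elementary computation is arguably the more economical choice in context.
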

\begin{proof}
Recall that we identify the normal bundle of $\Sigma \times \{0\}$, using the metric $g_{J_{0}}$, with
\begin{equation}
N_{(v,0)}(\Sigma\times\{0\})=\Big\{ \xi \in C^{\infty}(S^{1},v^{*}T\mathbb{R}^{2n}) \ \Big|\ \langle\int_{0}^{1}\xi(t)dt,\hat{\xi}\ \rangle=0,\ \forall\  \hat{\xi}\in T_{v}H^{-1}(0) \Big\}\times \mathbb{R}.
\label{eqn:NS0}
\end{equation}

Take $(v,\eta)\in\mathcal{U}_{\delta}^{0}$. Then by definition there exists $\bar{v}\in\Sigma$ and $(\xi,\sigma)\in N^{\delta}_{(\bar{v},0)}(\Sigma\times\{0\})$ such that
$$
P(v,\eta) =(\bar{v},0),\quad \textrm{and} \quad \Phi((\bar{v},0),(\xi,\sigma)) =(v,\eta).
$$
In particular $\sigma = \eta$ and $\xi=v-\bar{v}$.

Considering $\nabla^{2} \mathcal{A}^{H}$ as in (\ref{eqn:HessAH}) and the above characterization of $N(\Sigma\times\{0\})$, to prove the lemma it is enough to show that for all $\hat{\xi}\in T_{\bar{v}}H^{-1}(0)$
$$
\langle \int J_{0}(\partial_{t}\xi-\eta X^{H}(\bar{v}))dt,\hat{\xi} \rangle =0.
$$
But this is an easy consequence of the fact that
$$
\int \partial_{t}\xi dt = 0 \qquad \textrm{and} \qquad -J_{0}X^{H}(\bar{v})=\nabla H (\bar{v}).
$$
\end{proof}

By Step 4 of the proof of Theorem B.1 in \cite{CieliebakFrauenfelder2009}, we have that $\Ker(\nabla^{2}_{(v,0)}\mathcal{A}^{H})=T_{v}\Sigma \times\{0\}$ for $v\in \Sigma$. As a consequence the restriction of $\nabla^{2}\mathcal{A}^{H}$ to the normal bundle of $\Sigma\times\{0\}$ is injective. In the following Lemma we will prove that this restriction has an even stronger property, namely it admits a uniform lower bound, whenever $\nabla H$ is large enough. In particular, for $H$ satisfying (\ref{item:H1}), due to the linear growth of the gradient of $H$ as shown in (\ref{eqn:nablaH2}), condition $\|\nabla H(v)\|\geq \frac{1}{2}$ is satisfied far enough on $\Sigma$.
\begin{lem}
If $v\in \Sigma$ and $\|\nabla H(v)\|\geq \frac{1}{2}$, then for $(\xi,\eta)\in N_{(v,0)}(\Sigma\times\{0\})$
$$
\|\nabla^{2}_{(v,0)} \mathcal{A}^{H}(\xi,\eta)\|_{L^{2}\times\mathbb{R}}\geq \frac{1}{6}( \|\xi\|_{W^{1,2}}+|\eta|).
$$
\label{lem:P}
\end{lem}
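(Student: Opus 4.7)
The plan is to work directly from the explicit expression \eqref{eqn:HessAH} for the Hessian. Specializing to base point $(v,0)$, the formula simplifies considerably to
\[
\nabla^{2}_{(v,0)}\mathcal{A}^{H}(\xi,\eta)=\bigl(-J_{0}(\partial_{t}\xi-\eta X^{H}(v)),\,-\textstyle\int dH(\xi)\bigr),
\]
and since $J_{0}$ is an isometry, the squared $L^{2}\times\mathbb{R}$ norm decouples into $\|\partial_{t}\xi-\eta X^{H}(v)\|_{L^{2}}^{2}+\bigl|\int dH(\xi)\bigr|^{2}$. The key observation is that the loop piece splits further: because $X^{H}(v)$ is constant in $t$ and $\int_{0}^{1}\partial_{t}\xi\,dt=0$ by periodicity, the cross term drops out and one obtains the clean identity
\[
\|\partial_{t}\xi-\eta X^{H}(v)\|_{L^{2}}^{2}=\|\partial_{t}\xi\|_{L^{2}}^{2}+\eta^{2}\|\nabla H(v)\|^{2}.
\]
Combined with the hypothesis $\|\nabla H(v)\|\geq 1/2$, this immediately yields $\|\partial_{t}\xi\|_{L^{2}}\leq N$ and $|\eta|\leq 2N$, where $N:=\|\nabla^{2}_{(v,0)}\mathcal{A}^{H}(\xi,\eta)\|_{L^{2}\times\mathbb{R}}$.

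Next I would decompose $\xi=\bar{\xi}+\xi_{\perp}$ into its mean $\bar{\xi}:=\int_{0}^{1}\xi\,dt$ and mean-zero part $\xi_{\perp}$. By the characterization \eqref{eqn:NS0} of the normal bundle, $\bar{\xi}$ is orthogonal to $T_{v}H^{-1}(0)=\ker dH_{v}$, which is a hyperplane, so $\bar{\xi}$ is parallel to $\nabla H(v)$; therefore $\bigl|\int dH(\xi)\bigr|=|dH_{v}(\bar{\xi})|=\|\bar{\xi}\|\,\|\nabla H(v)\|$, giving $\|\bar{\xi}\|\leq 2N$ using again the lower bound on $\|\nabla H(v)\|$. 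For the mean-zero part, the Poincaré--Wirtinger inequality on $S^{1}$ gives $\|\xi_{\perp}\|_{L^{2}}\leq\frac{1}{2\pi}\|\partial_{t}\xi_{\perp}\|_{L^{2}}=\frac{1}{2\pi}\|\partial_{t}\xi\|_{L^{2}}$.

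Finally I would assemble the pieces. Using $\|\xi\|_{W^{1,2}}^{2}=\|\bar{\xi}\|^{2}+\|\xi_{\perp}\|_{L^{2}}^{2}+\|\partial_{t}\xi\|_{L^{2}}^{2}$ and the preceding bounds, one estimates
\[
\|\xi\|_{W^{1,2}}\leq\sqrt{\|\bar{\xi}\|^{2}+(1+\tfrac{1}{4\pi^{2}})\|\partial_{t}\xi\|_{L^{2}}^{2}}\leq 2\bigl|\textstyle\int dH(\xi)\bigr|+\sqrt{2}\,\|\partial_{t}\xi\|_{L^{2}}\leq(2+\sqrt{2})N,
\]
so that $\|\xi\|_{W^{1,2}}+|\eta|\leq(4+\sqrt{2})N<6N$, which is the desired bound.

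There is really no substantial obstacle here; the content of the lemma is precisely that the action functional is Morse--Bott transverse to $\Sigma\times\{0\}$ in the normal directions, and the computation above is just the quantitative realization of this fact under the mild lower bound on $\|\nabla H(v)\|$ (which by \eqref{eqn:nablaH2} holds outside a bounded region on $\Sigma$). The only mild point to be careful about is tracking the $L^{2}$ versus $W^{1,2}$ norms on $\xi$ so that the Poincaré--Wirtinger step is invoked on the correct (mean-zero) component.
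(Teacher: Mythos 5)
Your proof is correct and follows essentially the same route as the paper: both arguments rest on the vanishing of $\int_{0}^{1}\partial_{t}\xi\,dt$, the fact that the mean $\bar{\xi}$ is parallel to $\nabla H(v)$ by the characterization (\ref{eqn:NS0}) of the normal bundle, and a Poincar\'e-type inequality recovering $\|\xi\|_{L^{2}}$ from $\|\partial_{t}\xi\|_{L^{2}}$ and $\|\bar{\xi}\|$. Your use of the exact $L^{2}$-orthogonality $\|\partial_{t}\xi-\eta X^{H}(v)\|_{L^{2}}^{2}=\|\partial_{t}\xi\|_{L^{2}}^{2}+\eta^{2}\|\nabla H(v)\|^{2}$ is a slightly cleaner packaging of the same cancellation that the paper extracts via a weighted triangle-inequality splitting, and the constant you obtain is compatible with the stated $\tfrac{1}{6}$.
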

\begin{proof}
By (\ref{eqn:NS0}) for $(\xi,\eta)\in N_{(v,0)}(\Sigma\times\{0\})$ one has
\begin{align*}
\int_{0}^{1}\xi(t)dt & = \frac{\langle \int\xi(t)dt,\nabla H(v) \rangle}{\|\nabla H(v)\|^{2}}\nabla H(v)
=  \frac{ \int dH_{v}(\xi)dt}{\|\nabla H(v)\|^{2}}\nabla H(v),\\
\Big|\int dH_{v}(\xi)dt\Big| & = \Big\|\int_{0}^{1}\xi(t)dt\Big\|\ \|\nabla H(v)\|.
\end{align*}
Taking the above equality into account and recalling that for $\xi \in W^{1,2}(S^{1},\mathbb{R}^{2n})$
$$
\|\partial_{t}\xi\|_{L^{2}}+\Big\| \int \xi(t)dt \Big\|\geq \|\xi\|_{L^{2}},
$$
we can bound the Hessian of $\mathcal{A}^{H}$ from below in the following way
\begin{align*}
 \|\nabla^{2}_{(v,0)} & \mathcal{A}^{H}(\xi,\eta)\|_{L^{2}\times\mathbb{R}} =\|\partial_{t}\xi-\eta X^{H}(v)\|_{L^{2}}+\Big|\int dH_{v}(\xi)\Big|\\
 & = \|\partial_{t}\xi-\eta X^{H}(v)\|_{L^{2}}+\Big\| \int \xi(t)dt\Big\|\ \|\nabla H(v)\|\\
 & \geq  \frac{1}{3}(\|\partial_{t}\xi\|_{L^{2}}-|\eta| \| X^{H}(v)\|_{L^{2}})+\frac{2}{3}\Big\|\int (\partial_{t}\xi-\eta X^{H}(v))dt\Big\|+\Big\|\int \xi(t)dt\Big\| \|\nabla H(v)\|\\
 & =  \frac{1}{3}\|\partial_{t}\xi\|_{L^{2}}+\frac{1}{3}|\eta| \|X_{H}(v)\|_{L^{2}}+\Big\|\int \xi(t)dt\Big\|\ \|\nabla_{v}H\|\\
 & \geq \frac{1}{6}\|\partial_{t}\xi\|_{L^{2}}+\frac{1}{6}\|\xi\|_{L^{2}}+\frac{1}{3}|\eta| \| \nabla H(v)\|_{L^{2}}+\Big\|\int \xi(t)dt\Big\|\ \Big(\|\nabla H(v)\|-\frac{1}{6}\Big)\\
 & \geq \frac{1}{6}(\|\xi\|_{W^{1,2}}+|\eta|)
\end{align*}
The last inequality comes from the assumption that $\|\nabla H(v)\|\geq \frac{1}{2}$.
\end{proof}


\subsection{Projecting a Floer trajectory}
Let $u:\mathbb{R}\to C^{\infty}(S^{1},\mathbb{R}^{2n})\times \mathbb{R}$ be a Floer trajectory, i.e. $u(s)=(v(s,t),\eta(s))$ and
$$
\partial_{s}u=\left( \begin{array}{c}
\partial_{s}v\\
\partial_{s}\eta
\end{array} \right)
=\left( \begin{array}{c}
-J(\partial_{t}v-\eta X^{H}(v)) \\
-\int H(v) \end{array} \right).
$$
Let us analyze $P(u(s))$. Can we bound the distance $P(u(s))$ travels along the hypersurface $\Sigma$ for $u(s)\in\mathcal{U}_{\delta}^{0}$? 

\begin{figure}[h]
 \centering  
  \def\svgwidth{9cm}  
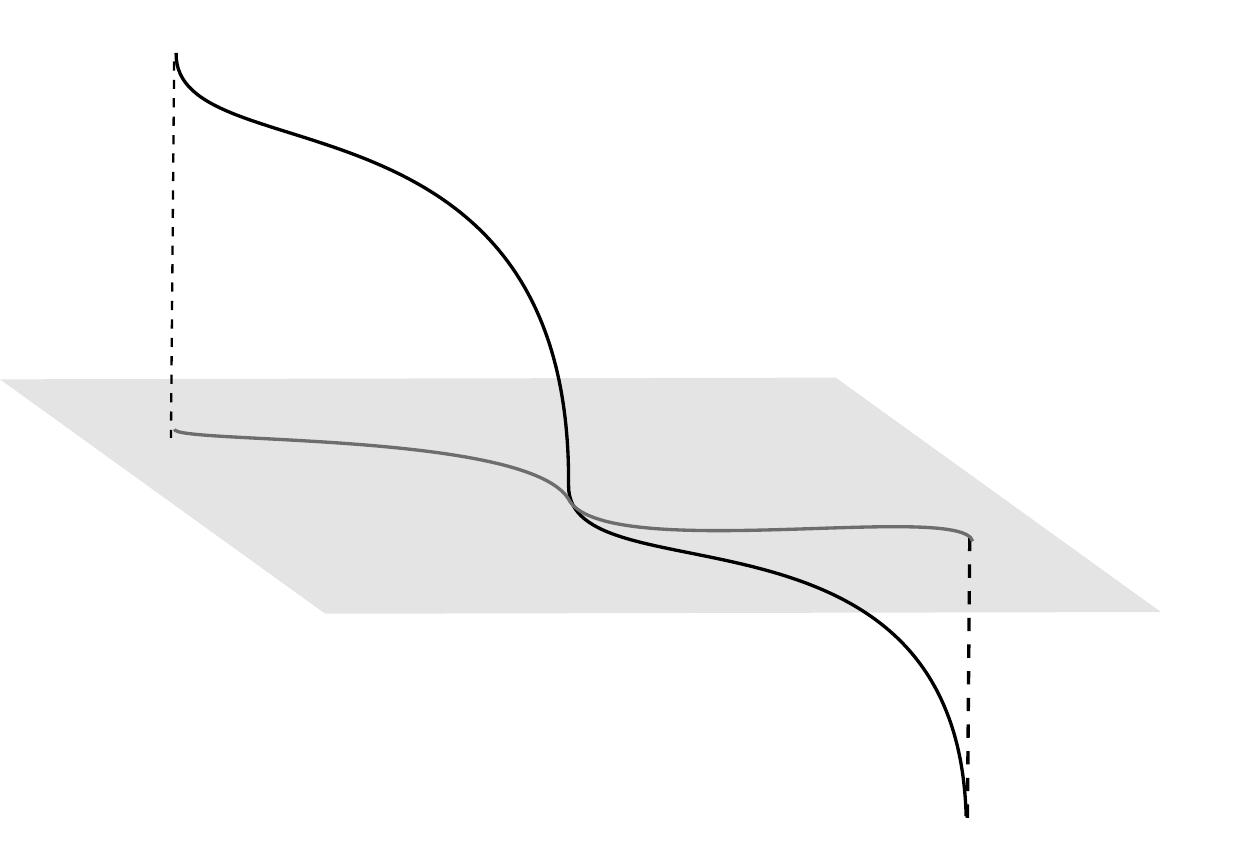
\caption{Schematic depiction of the Floer trajectory near the critical set $\Sigma\times\{0\}$.}
\end{figure}
To simplify the notation we will introduce the set $\mathscr{N}^{\delta}_{r}(\Sigma)$. Recall the tubular neighborhood $\mathcal{U}_{\delta}^{0}$ defined in subsection \ref{ssec:TubN}.
Then for small enough $\delta>0$ and any $r>0$ we will denote
\begin{equation}
\mathscr{N}^{\delta}_{r}(\Sigma):=\{ x \in \mathcal{U}_{\delta}^{0}\ |\ \|P(x)\|\geq r \}
\label{eqn:Nr}
\end{equation}
Observe that even though the definition of $\mathcal{U}^{0}_{\delta}$ depends on whether we take $\Sigma$ to be $H^{-1}_{0}(0)$ or $H^{-1}_{1}(0)$, however for $r>\sup_{x\in K}\|x\|$ the definition of $\mathscr{N}^{\delta}_{r}(\Sigma)$ does not depend on the choice of $\Sigma$, i.e. :
$$
\mathscr{N}^{\delta}_{r}(H^{-1}_{0}(0))= \mathscr{N}^{\delta}_{r}(H^{-1}_{1}(0)),
$$
since $H_{0}$ and $H_{1}$ differ only on the compact set $K$. Moreover, we have
$$
\{(v,\eta)\in\mathcal{U}^{0}_{\delta}\ |\ \|v\|_{L^{2}}\geq r+\delta\}\subseteq \mathscr{N}^{\delta}_{r}(\Sigma).
$$
Now we are ready to prove the boundedness of the projection $P(u(s))$ for a Floer trajectory $u(s)$ within $\mathscr{N}^{\delta}_{r}(\Sigma)$.

\begin{prop}
Assume $H$ is admissible, then there exist $\delta>0,\ r>0$ and $\hat{M}$, such that if $u$ is a Floer trajectory 
\begin{align*}
u & :\mathbb{R}\to C^{\infty}(S^{1},\mathbb{R}^{2n})\times \mathbb{R}\\
u(s) & =(v(s),\eta(s)),
\end{align*}
and $s_{0},s_{1}\in\mathbb{R}$ are such that
$$
\forall \ s\in[s_{0},s_{1}]  \qquad  u(s)\in \mathscr{N}^{\delta}_{r}(\Sigma),
$$
the following holds
$$
\|P(u(s_{1}))-P(u(s_{0}))\|\leq\hat{M}|\mathcal{A}^{H}(u(s_{1}))-\mathcal{A}^{H}(u(s_{0}))|.
$$
\label{prop:boundP}
\end{prop}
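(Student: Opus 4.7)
The strategy is to differentiate the projected trajectory $s \mapsto P(u(s))$ and show that the only term in $\partial_s u$ which survives under $dP$ is the quadratic Taylor remainder, whose norm can in turn be controlled by $\|\nabla \mathcal{A}^{H}(u(s))\|^{2}$. Since the $L^{2}$-norm squared of the gradient is the derivative of the action along a Floer trajectory (we are in the constant-Hamiltonian regime of this section), integration then yields the claimed bound by the action difference.

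First I would fix $r$ large enough that Property (\ref{item:H1}) — in the form of (\ref{eqn:nablaH2}) — forces $\|\nabla H(\bar v)\| \geq \tfrac{1}{2}$ whenever $\|\bar v\| \geq r$, and I would shrink $\delta$ until both Lemma \ref{lem:TubN} provides a genuine $\delta$-tubular neighborhood $\mathcal{U}^{0}_{\delta}$ with bounded projection $\|dP\|\leq C_{P}$, and $\tfrac{1}{2}M\delta \leq \tfrac{1}{12}$. For $s\in[s_{0},s_{1}]$ we have $u(s)\in\mathscr{N}^{\delta}_{r}(\Sigma)$, so $P(u(s))\in\Sigma\times\{0\}$ is a critical point of $\mathcal{A}^{H}$, and by the assumption $\delta<$\etat~recalled at the beginning of this section the almost complex structure along $u$ equals $J_{0}$, so the Floer equation reads $\partial_{s}u = \nabla \mathcal{A}^{H}(u)$ in the $L^{2}\times\mathbb{R}$ sense.

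Next I apply Lemma \ref{lem:Taylor} at $u(s)\in\mathcal{U}^{0}_{\delta}$. Because $\nabla\mathcal{A}^{H}(P(u(s)))=0$, the expansion reduces to
\[
\nabla\mathcal{A}^{H}(u(s)) \;=\; Pt^{\gamma}_{P(u(s))}\!\bigl(\nabla^{2}_{P(u(s))}\mathcal{A}^{H}(\Phi^{-1}(u(s)))\bigr)+\mathcal{O}(\Phi^{-1}(u(s))),
\]
with $\|\mathcal{O}(\Phi^{-1}(u(s)))\|_{L^{2}\times\mathbb{R}}\leq \tfrac{1}{2}M\|\Phi^{-1}(u(s))\|^{2}_{L^{2}\times\mathbb{R}}$. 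Lemma \ref{lem:HessAinN} places the Hessian term in $N_{P(u(s))}(\Sigma\times\{0\})$, which by the inclusion (\ref{eqn:NsubKerdP}) is annihilated by $dP_{u(s)}\circ Pt^{\gamma}_{P(u(s))}$. Differentiating $P\circ u$ therefore collapses to
\[
\tfrac{d}{ds}P(u(s)) \;=\; dP_{u(s)}\bigl(\mathcal{O}(\Phi^{-1}(u(s)))\bigr),
\]
so the tangential motion along $\Sigma\times\{0\}$ is controlled entirely by the quadratic remainder.

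The final step is to estimate $\|\Phi^{-1}(u(s))\|$ by $\|\nabla\mathcal{A}^{H}(u(s))\|$. Lemma \ref{lem:P} applied at $P(u(s))$ (where $\|\nabla H\|\geq\tfrac12$ by our choice of $r$) and the triangle inequality in the Taylor expansion give
\[
\|\nabla\mathcal{A}^{H}(u(s))\|_{L^{2}\times\mathbb{R}} \;\geq\; \tfrac{1}{6}\|\Phi^{-1}(u(s))\|_{L^{2}\times\mathbb{R}}-\tfrac{1}{2}M\|\Phi^{-1}(u(s))\|^{2}_{L^{2}\times\mathbb{R}},
\]
and since $\|\Phi^{-1}(u(s))\|<\delta$ with $\tfrac{1}{2}M\delta\leq\tfrac{1}{12}$, the second term is absorbed, yielding $\|\Phi^{-1}(u(s))\|\leq 12\|\nabla\mathcal{A}^{H}(u(s))\|$. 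Hence
\[
\bigl\|\tfrac{d}{ds}P(u(s))\bigr\|_{L^{2}\times\mathbb{R}} \;\leq\; C_{P}\cdot\tfrac{1}{2}M\,\|\Phi^{-1}(u(s))\|^{2} \;\leq\; 72\,M\,C_{P}\,\|\nabla\mathcal{A}^{H}(u(s))\|^{2}_{L^{2}\times\mathbb{R}}.
\]
Integrating over $[s_{0},s_{1}]$ and using that in this section $\tfrac{d}{ds}\mathcal{A}^{H}(u(s))=\|\nabla\mathcal{A}^{H}(u(s))\|^{2}_{L^{2}\times\mathbb{R}}$ produces the desired inequality with $\hat M:=72\,M\,C_{P}$. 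The main subtlety here is twofold: first, tuning $\delta$ and $r$ so that the Hessian's lower bound (Lemma \ref{lem:P}) dominates the quadratic remainder uniformly in the tubular neighborhood; and second, ensuring that the operator norm of $dP$ is uniformly bounded on $\mathcal{U}^{0}_{\delta}$, which relies on the fact that the tubular neighborhood constructed in Lemma \ref{lem:TubN} has uniform thickness, so that the curvature of $\Sigma$ cannot blow up the projection.
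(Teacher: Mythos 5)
Your proposal is correct and follows essentially the same route as the paper: Taylor-expand $\nabla\mathcal{A}^{H}$ around $P(u(s))$, kill the Hessian term via Lemma \ref{lem:HessAinN} together with (\ref{eqn:NsubKerdP}), bound the remainder quadratically, invert the Hessian's lower bound from Lemma \ref{lem:P} to get $\|\Phi^{-1}(u)\|\leq 12\|\nabla\mathcal{A}^{H}(u)\|$ for $\delta<\tfrac{1}{6M}$, and integrate against the energy identity. The only cosmetic difference is that the paper performs the final integration via the change of variables $\Psi(s)=\mathcal{A}^{H}(u(s))$ and takes $\|dP\|\leq 1$, arriving at $\hat{M}=72M$ rather than your $72MC_{P}$.
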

\begin{proof}
$H$ is admissible, hence by Lemma \ref{lem:TubN} there exists $\tilde{\delta}(H)>0$, such that for any $0<\delta\leq \tilde{\delta}(H)$ 
$\Sigma\times\{0\}$ admits a tubular neighborhood $\mathcal{U}^{0}_{\delta}$.

Now, knowing that $\partial_{s}u=\nabla \mathcal{A}^{H}(u)$ and using the Taylor expansion for $\nabla \mathcal{A}^{H}$ from Lemma \ref{lem:Taylor} 
we obtain

\begin{align*}
 P( & u (s_{1})) - P(u(s_{0})) = \int_{s_{0}}^{s_{1}} \frac{d}{ds}P(u(s))ds \\
& =  \int_{s_{0}}^{s_{1}} dP_{u(s)}\partial_{s}u(s)ds\\
& = \int_{s_{0}}^{s_{1}} dP_{u(s)}\nabla\mathcal{A}^{H}(u(s))ds\\
& = \int_{s_{0}}^{s_{1}} dP_{u(s)} \Big( Pt^{\gamma}\big( \nabla \mathcal{A}^{H}(P(u(s)))+ \nabla^{2}_{P(u(s))}\mathcal{A}^{H}(\Phi^{-1}(u(s))) \big) + \mathcal{O}(\Phi^{-1}(u(s))\Big)ds\\
& = \int_{s_{0}}^{s_{1}} dP_{u(s)} \Big( Pt^{\gamma}\big( \nabla^{2}_{P(u(s))}\mathcal{A}^{H}(\Phi^{-1}(u(s))) \big) + \mathcal{O}(\Phi^{-1}(u(s))\Big)ds.
\end{align*}

Combining results from Lemma \ref{lem:HessAinN} and (\ref{eqn:NsubKerdP}) we obtain that for all $(v,\eta)$ in $\mathcal{U}_{\delta}^{0}$
$$
\nabla^{2}_{P(v,\eta)}\mathcal{A}^{H}(\Phi^{-1}(v,\eta)) \in N_{P(v,\eta)}(\Sigma\times\{0\})\subseteq \Ker (dP_{(v,\eta)}\circ Pt^{\gamma}_{P(v,\eta)}),
$$
hence the first expression under the integral in fact vanishes.

Now we will use the estimate on $\mathcal{O}$ from Lemma \ref{lem:Taylor} to get
\begin{align*}
\|P(u(s_{1}))-P(u(s_{0}))\| & \leq \int_{s_{0}}^{s_{1}} \|dP_{u(s)}\mathcal{O}(\Phi^{-1}(u(s)))\|_{L^{2}\times\mathbb{R}}^{2}ds\\
& \leq \int_{s_{0}}^{s_{1}}\| \mathcal{O}(\Phi^{-1}(u(s)))\|_{L^{2}\times\mathbb{R}}^{2}ds\\
& \leq \frac{1}{2} M\int_{s_{0}}^{s_{1}}\|\Phi^{-1}(u(s))\|_{L^{2}\times\mathbb{R}}^{2}ds.
\end{align*}
Let us now make a change of variables. Note that
\begin{align*}
 \Psi & :[s_{0},s_{1}]\to [\mathcal{A}^{H}(u(s_{0})),\mathcal{A}^{H}(u(s_{1}))],\\
 \Psi(s) & := \mathcal{A}^{H}(u(s)),\\
 \Psi'(s)&=\|\nabla \mathcal{A}^{H} u(s)\|_{L^{2}\times\mathbb{R}}^{2},
\end{align*}
is in fact a diffeomorphism. Substituting into the equation we get
$$
 \|P(u(s_{1}))-P(u(s_{0}))\|  \leq  \frac{1}{2} M \int_{\mathcal{A}^{H}(u(s_{0}))}^{\mathcal{A}^{H}(u(s_{1}))}\frac{\|\Phi^{-1}\circ u \circ \Psi^{-1}(\tau)\|_{L^{2}\times\mathbb{R}}^{2}}{\|\nabla \mathcal{A}^{H}(u\circ \Psi^{-1}(\tau))\|_{L^{2}\times\mathbb{R}}^{2}}d\tau.
$$

Let us now estimate the denominator under the integral. By (\ref{eqn:nablaH2}) there exists $r>0$, depending only on the constants $c_{1},c_{2},c_{3}$, such that for every $x\in \mathscr{N}^{\delta}_{r}(\Sigma) \cap (\Sigma\times\{0\})$, $\|\nabla H(x)\| \geq \frac{1}{2}$. In other words, whenever  $u(s)\in \mathscr{N}^{\delta}_{r}(\Sigma)$ then for $P(u(s))$ we can use the result from Lemma \ref{lem:P} 
$$
\|\nabla^{2}_{P(u)}\mathcal{A}^{H}(\Phi^{-1}(u))\|_{L^{2}\times\mathbb{R}} \geq \frac{1}{6}\|\Phi^{-1}(u)\|_{L^{2}\times\mathbb{R}},
$$
and the estimate from Lemma \ref{lem:Taylor} to calculate
\begin{align*}
\|\nabla \mathcal{A}^{H}(u)\|_{L^{2}\times\mathbb{R}} & \geq \|\nabla^{2}_{P(u)}\mathcal{A}^{H}(\Phi^{-1}(u))\|_{L^{2}\times\mathbb{R}}-\frac{1}{2}M\|\Phi^{-1}(u)\|_{L^{2}\times\mathbb{R}}^{2}\\
& \geq \frac{1}{6}\|\Phi^{-1}(u)\|_{L^{2}\times\mathbb{R}} - \frac{1}{2}M\|\Phi^{-1}(u)\|_{L^{2}\times\mathbb{R}}^{2}\\
& = \frac{1}{2}\|\Phi^{-1}(u)\|_{L^{2}\times\mathbb{R}} (\frac{1}{3} - M \|\Phi^{-1}(u)\|_{L^{2}\times\mathbb{R}}),
\end{align*}
where $M>\sup_{\mathbb{R}^{2n}} \|\Hess H\|$.
Therefore, for 
$$
\|\Phi^{-1}(u)\|_{L^{2}\times\mathbb{R}}\leq \frac{1}{6 M},
$$
one has
$$
 \|\nabla\mathcal{A}^{H}(u)\|_{L^{2}\times\mathbb{R}} \geq \frac{1}{12}\|\Phi^{-1}(u)\|_{L^{2}\times\mathbb{R}}.
$$
Therefore, if we take $\delta>0$, such that
$$
\delta < \min \{\tfrac{1}{6M}, \tilde{\delta}(H)\},
$$
where $\tilde{\delta}(H)$ is as in Lemma \ref{lem:TubN}, then we can combine the two results above to analyze the expression under the integral
$$
\frac{\|\Phi^{-1}(u)\|^{2}_{L^{2}\times\mathbb{R}}}{\|\nabla\mathcal{A}^{H}(u)\|^{2}_{L^{2}\times\mathbb{R}}} \leq \frac{12^{2}\|\Phi^{-1}(u)\|^{2}_{L^{2}\times\mathbb{R}}}{\|\Phi^{-1}(u)\|^{2}_{L^{2}\times\mathbb{R}}} = 12^{2}.\\
$$
Therefore, taking $\hat{M}=72 M$, we obtain
\begin{align*}
\|P(u(s_{1}))-P(u(s_{0}))\|& \leq \frac{1}{2} M \int_{\mathcal{A}^{H}(u(s_{0}))}^{\mathcal{A}^{H}(u(s_{1}))}\frac{\|\Phi^{-1}\circ u \circ \Psi^{-1}(\tau)\|_{L^{2}\times\mathbb{R}}^{2}}{\|\nabla \mathcal{A}^{H}(u\circ \Psi^{-1}(\tau))\|_{L^{2}\times\mathbb{R}}^{2}}d\tau \\
& \leq \hat{M}|\mathcal{A}^{H}(u(s_{1}))-\mathcal{A}^{H}(u(s_{0}))|,
\end{align*}
as claimed.
\end{proof}
\section{Oscillations and $L^{2}$ bounds}
\label{sec:oscillations}
The goal of this section is to show that for a homotopy $\Gamma$ satisfying the assumptions of Theorem \ref{twr:ModuliCompact}, if we fix $a,b\in \mathbb{R}$ and a compact subset $N$, such that $N\subseteq H^{-1}_{0}(0)$, then for each pair $(\Lambda_{0},\Lambda_{1})$ of connected components
$$
\Lambda_{0}\subseteq \mathscr{C}(\mathcal{A}^{H_{0}},N)\cap (\mathcal{A}^{H_{0}})^{-1}([a,\infty)), \qquad\textrm{and} \qquad
\Lambda_{1}\subseteq \Crit(\mathcal{A}^{H_{1}})\cap (\mathcal{A}^{H_{1}})^{-1}((-\infty,b]),
$$
all the Floer trajectories in the associated moduli space 
$$
\mathscr{M}^{\Gamma}(\Lambda_{0},\Lambda_{1}) \subseteq C^{\infty}(\mathbb{R} \times S^{1},\mathbb{R}^{2n}) \times C^{\infty}(\mathbb{R},\mathbb{R}),
$$
are uniformly bounded in the $L^{2}\times\mathbb{R}$ norm.

Recall that in Proposition \ref{prop:bounds1} we have already established bounds on the $\eta$ component of a Floer trajectory. However, to establish the $L^{2}$ bounds on the $v$ component, one has to analyze the Floer trajectory not only outside of set of infinitesimal action derivation, but also how far it travels within $\mathcal{B}^{\Gamma}(\mathfrak{a},\mathfrak{y},\varepsilon)$, along the hypersurface $\Sigma \times\{0\}$. To obtain the uniform bounds, we will first cover the space $C^{\infty}(S^{1}, \mathbb{R}^{2n})\times\mathbb{R}$ by three sets on which the Floer trajectory can be bounded differently.

Let $\tilde{\delta}(\textsf{H})$ be as in Lemma \ref{lem:TubN} and fix
\begin{equation}
\delta \in \big(0, \min\big\{\textrm{\etat},\tfrac{1}{6M}, \tilde{\delta}(\textsf{H})\} \big).
\label{eqn:delta}
\end{equation}
That means that the $\delta$-tubular neighborhood of $\textsf{H}^{-1}(0)$ is well defined and $\delta$ satisfies assumptions of Proposition \ref{prop:boundP}. By Proposition \ref{prop:partition} for every 
\begin{equation}
\varepsilon\in (0,\varepsilon_{2}(\tfrac{\delta}{2},\|J\|_{L^{\infty}})),
\label{eqn:varepsilon}
\end{equation}
we have the following partition of $\mathcal{B}^{\Gamma}(\mathfrak{a},\mathfrak{y},\varepsilon)$
$$
\mathcal{B}^{\Gamma}(\mathfrak{a},\mathfrak{y},\varepsilon)\subseteq K_{\delta / 2}^{1}\cup\mathcal{U}_{\delta /2}^{1}.
$$
The reason why we choose here $\frac{1}{2}\delta$ instead of $\delta$ is to estimate the number of oscillations of the Floer trajectory and will became apparent later in the proof of Lemma \ref{lem:Kbound}.

Now take $\textsl{v}_{2}(\delta / 2, \max_{x\in K\cup \mathcal{V}}\|x\|)$ as in Proposition \ref{prop:partition}, $\textsl{v}_{3}$ as in Lemma \ref{lem:ends} and $r$ as in Proposition \ref{prop:boundP} and denote 
\begin{equation}
\textsl{v}_{4}=\max\Big\{ r,\textsl{v}_{3}, \sup_{x\in K}\|x\|, \textsl{v}_{2}(\tfrac{\delta}{2},\max_{x\in K\cup\mathcal{V}}\|x\|) \Big\}.
\label{eqn:v4}
\end{equation}
Observe that $\textsl{v}_{4}$ does not depend on $\Gamma$, but only on the parameters chosen uniformly for the whole set $\mathscr{O}(\mathsf{H})$ and on the fact that $\Gamma$ satisfies (\ref{eqn:Hs2}). Now, in terms of $\textsl{v}_{4}$ we can define the following subset of $C^{\infty}(S^{1},\mathbb{R}^{2n})\times \mathbb{R}$:
\begin{equation}
K^{0}_{\delta}:= \left\lbrace
\begin{array}{c | c}
& |\eta|\leq \mathfrak{y}\\
{\smash{\raisebox{.5\normalbaselineskip}{$(v,\eta)\in C^{\infty}(S^{1},\mathbb{R}^{2n})\times \mathbb{R}$}}} & \|v\|_{L^{2}}\leq v_{4}+\delta 
\end{array}\right\rbrace
\label{eqn:K0}
\end{equation}
The set $K^{0}_{\delta}$ is bounded in $L^{2}\times\mathbb{R}$ norm. By definition of  $K^{1}_{\delta / 2}$ and  $K^{0}_{\delta}$ we know that every Floer trajectory starts in these sets due to the inclusion
$$
\mathscr{C}(\mathcal{A}^{H_{0}},N) \cap (\mathcal{A}^{H_{0}})^{-1}([a,B(\Gamma,a,b)]) \subseteq K^{1}_{\delta / 2} \subseteq  K^{0}_{\delta}.
$$
Now let $\mathscr{N}_{\textsl{v}_{4}}^{\delta}$ be the set as in (\ref{eqn:Nr}). Then we have the inclusions
$$
 \mathcal{U}_{\delta}^{0} \setminus K^{0}_{\delta}\  \subseteq\ \mathscr{N}_{\textsl{v}_{4}}^{\delta},\qquad\textrm{and}\qquad
\mathcal{B}^{\Gamma}(\mathfrak{a},\mathfrak{y},\varepsilon) \subseteq K_{\delta}^{0}\cup \mathcal{U}_{\delta}^{0}  = K_{\delta}^{0} \cup \mathscr{N}_{\textsl{v}_{4}}^{\delta}.
$$
Note that our choice of $\delta<$\etat\ and 
$$
J_{s}\in\mathscr{J}\Big(\mathbb{R}^{2n},\omega_{0},\mathcal{V}\times\big((-\infty,-\textrm{\etat})\cup( \textrm{\etat}, \infty)\big)\Big),
$$
implies that
$$
J_{s}\Big|_{\mathcal{U}^{0}_{\delta}}=J_{0},
$$
where $J_{0}$ is the standard almost complex structure.
Moreover, observe that all $H_{s}$ differ only outside a compact set, namely $H_{s}-H_{0}\in C^{\infty}_{0}(K)$. In particular the definition of $\mathscr{N}_{\textsl{v}_{4}}^{\delta}$ is independent of which $H_{s}$ we choose, since we have taken 
$$
\textsl{v}_{4} \geq  \sup_{x\in K}\|x\| .
$$
This means that if we assume $s \notin (0,1)$ then on $\mathscr{N}^{\delta}_{\textsl{v}_{4}}$ the pair $(H_{s},J_{s})$ is constant and equal either to $(H_{0},J_{0})$ or $(H_{1},J_{0})$. Finally, we have assumed $\textsl{v}_{4}\geq r$, where $r$ is as in Proposition \ref{prop:boundP}, so we can apply its results directly.

The result is that we can cover the space $C^{\infty}(S^{1}, \mathbb{R}^{2n})\times\mathbb{R}$ by three sets 
$$
K_{\delta}^{0},  \qquad \mathscr{N}_{\textsl{v}_{4}}^{\delta}, \qquad (C^{\infty}(S^{1}, \mathbb{R}^{2n})\times\mathbb{R})\setminus \mathcal{B}^{\Gamma}(\mathfrak{a},\mathfrak{y},\varepsilon),
$$
on which the Floer trajectory can be bounded differently. Whenever the Floer trajectory is outside $\mathcal{B}^{\Gamma}(\mathfrak{a},\mathfrak{y},\varepsilon)$, then its growth in the $L^{2}\times\mathbb{R}$ norm can be estimated using results from Lemma \ref{lem:Kbound}. On the other hand, Proposition \ref{prop:boundP} gives the bounds on the growth of the Floer trajectory in the $L^{2}\times\mathbb{R}$ norm, provided the Floer trajectory is in $ \mathscr{N}_{r}^{\delta}$. To obtain the uniform bounds for the whole Floer trajectory, we will also have to determine how it oscillates between the two non-compact sets, i.e. between $\mathcal{B}^{\Gamma}(\mathfrak{a},\mathfrak{y},\varepsilon)\setminus K_{\delta}^{0}$ and $(C^{\infty} (S^{1}, \mathbb{R}^{2n})\times\mathbb{R})\setminus \mathscr{N}_{\textsl{v}_{4}}^{\delta}$, which is carried out in Lemma \ref{lem:Kbound}.

Let us consider the setting of Theorem \ref{twr:ModuliCompact} and a pair of connected components
$(\Lambda_{0},\Lambda_{1})$ satisfying
\begin{gather*}
\mathscr{M}^{\Gamma}(\Lambda_{0},\Lambda_{1})  \neq \emptyset,\\
\Lambda_{0}\subseteq \mathscr{C}(\mathcal{A}^{H_{0}},N) \cap (\mathcal{A}^{H_{0}})^{-1}([a,\infty)), \quad
\Lambda_{1}\subseteq \Crit(\mathcal{A}^{H_{1}})  \cap (\mathcal{A}^{H_{1}})^{-1}((-\infty,b]),
\end{gather*}
for a fixed pair of $a,b\in \mathbb{R}$. By Lemma \ref{lem:ends} one can assume
$\Lambda_{0} \subseteq K_{\delta/ 2}^{1} \subseteq K_{\delta}^{0}$,
with the sets $K_{\delta/ 2}^{1},K_{\delta}^{0}$ defined as in (\ref{eqn:K1}) and (\ref{eqn:K0}), respectively.

Take a Floer trajectory $u \in \mathscr{M}^{\Gamma}(\Lambda_{0},\Lambda_{1})$,
and fix $s\in \mathbb{R}$. For such $s$ let us define a sequence in $\mathbb{R}$
\begin{equation}
\tau_{1}(s) \leq \tau_{2}^{-}(s) \leq \tau_{2}^{+}(s) \leq \tau_{3}^{-}(s)\dots \leq s,
\label{eqn:sequence}
\end{equation}
in the following way
\begin{align*}
\tau_{1}(s)&:=\sup\{ \tau \leq s\ |\ u(\tau)\in K_{\delta}^{0}\},\\
\tau_{2}^{-}(s)&:=\inf\{\tau_{1}(s)\leq \tau \leq s\ |\ u(\tau)\notin K_{\delta}^{0}\cup \mathscr{N}^{\delta}_{\textsl{v}_{4}}\},\\
\tau_{k}^{+}(s)&:=\inf\{\tau_{k}^{-}(s) \leq \tau \leq s\ |\ u(\tau)\in \mathcal{B}^{\Gamma}(\mathfrak{a},\mathfrak{y},\varepsilon)\},\\
\tau_{k+1}^{-}(s)&:=\inf\{\tau_{k}^{+}(s) \leq \tau \leq s\ |\ u(\tau)\notin K_{\delta}^{0}\cup \mathscr{N}^{\delta}_{\textsl{v}_{4}}\}.
\end{align*}
We stop the sequence if the set on the right-hand side becomes empty. Note that by assumption
$$
\lim_{s\to -\infty}u(s)\in \Lambda_{0} \subseteq K_{\delta / 2}^{1},
$$
hence $\tau_{1}(s)$ is well defined and finite. This means that the sequence has always at least one element.

\begin{figure}[h]
\centering
\def\svgwidth{\columnwidth}  
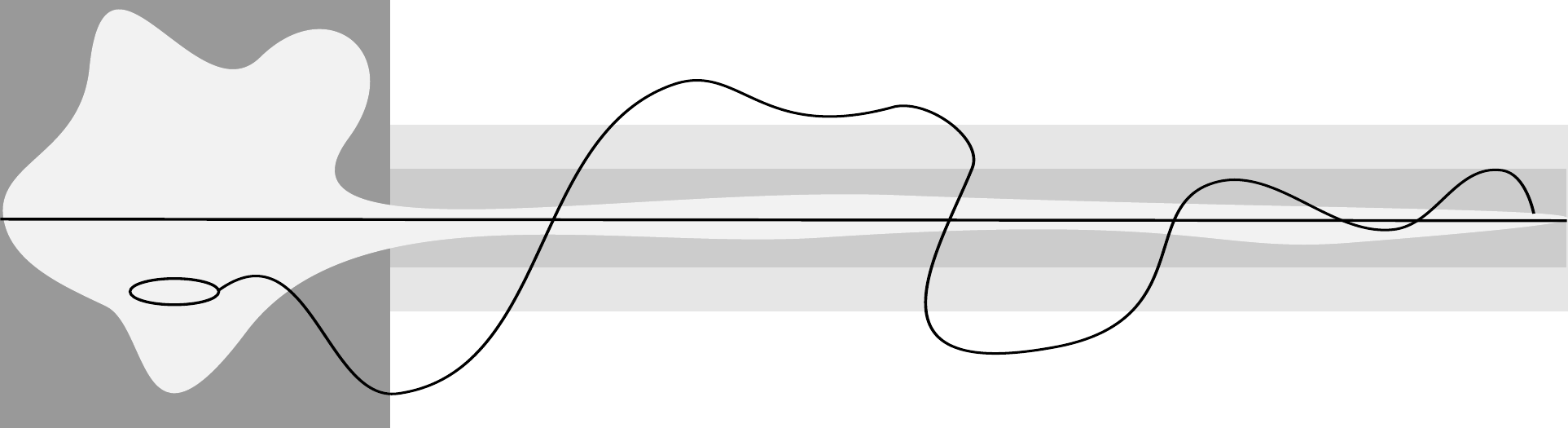
\caption{Construction of the sequence (\ref{eqn:sequence}). Here $\Lambda^{0}$ is a connected component of $\mathscr{C}(\mathcal{A}^{H_{0}},N_{0})\cap (\mathcal{A}^{H_{0}})^{-1}([a,\infty))$.}
\end{figure}

By definition the sequence satisfies the following properties
\begin{align}
& \forall\ \tau\in (\tau_{1}(s),\tau_{2}^{-}(s)), & u(\tau)\in \mathscr{N}^{\delta}_{\textsl{v}_{4}},\label{seqn1}\\
& \forall\ \tau\in (\tau_{k}^{-}(s),\tau_{k}^{+}(s)), & u(\tau)\notin \mathcal{B}^{\Gamma}(\mathfrak{a},\mathfrak{y},\varepsilon),\label{seqn2}\\
& \forall\ \tau\in (\tau_{k}^{+}(s),\tau_{k+1}^{-}(s)), & u(\tau)\in \mathscr{N}^{\delta}_{\textsl{v}_{4}} \label{seqn3}.
\end{align}
This means that the sequence carries some information on where the Floer trajectory is: in $\mathscr{N}^{\delta}_{\textsl{v}_{4}}$ or in $(C^{\infty}(S^{1},\mathbb{R}^{2n})\times\mathbb{R})\setminus \mathcal{B}^{\Gamma}(\mathfrak{a},\mathfrak{y},\varepsilon)$ (it can be in both, though). The length of the sequence indicates how many times the Floer trajectory oscillates between the two sets.

With respect to the sequence defined above, we will call an oscillation a connected component of the Floer trajectory 
of the form
$$
u^{-1}([\tau^{+}_{k}(s),\tau^{+}_{k+1}(s)]),
$$
for some $1\leq k\in \mathbb{N}$. In the following lemma we will prove that the sequence is in fact finite, thus the number of oscillations is finite, too.
\begin{lem}
Let $\Gamma=\{H_{s},J_{s}\}_{s \in \mathbb{R}}$ be a smooth homotopy of Hamiltonians and almost complex structures defined as in Theorem \ref{twr:ModuliCompact}. Fix a pair $a,b\in\mathbb{R}$ and two connected components
$$
\Lambda_{0}  \subseteq \mathscr{C}(\mathcal{A}^{H_{0}},N) \cap (\mathcal{A}^{H_{0}})^{-1}([a,\infty)),\qquad \textrm{and} \qquad
\Lambda_{1}  \subseteq \Crit(\mathcal{A}^{H_{1}}) \cap (\mathcal{A}^{H_{1}})^{-1}((-\infty,b]).
$$
Let $\mathfrak{e}$ be the bound on the energy of the Floer trajectories of $\mathscr{M}^{\Gamma}(\Lambda^{0},\Lambda^{1})$ as provided by Proposition \ref{prop:bounds1}. Fix $\delta$ as in (\ref{eqn:delta}) and $\varepsilon$ as in (\ref{eqn:varepsilon}). Fix $u \in \mathscr{M}^{\Gamma}(\Lambda^{0},\Lambda^{1})$ and $s\in \mathbb{R}$. Let
$$
\tau_{1}(s) \leq \tau_{2}^{+}(s) \leq \tau_{2}^{-}(s) \leq \tau_{3}^{+}(s)\dots \leq s,
$$
be a sequence associated to $s$ and defined as in (\ref{eqn:sequence}). Let $K\in \mathbb{N}$. Then
\begin{equation}
\sum_{k=2}^{K} \|u(\tau_{k}^{+}(s))-u(\tau_{k}^{-}(s))\|_{L^{2}(S^{1})\times \mathbb{R}} \leq \frac{\mathfrak{e}}{\varepsilon}.
\label{eqn:-k+}
\end{equation}
Moreover, this implies that the number of oscillations is in fact finite, namely
\begin{equation}
K \leq \frac{2\mathfrak{e}}{\delta\varepsilon}+1.
\label{eqn:Kbound}
\end{equation}
\label{lem:Kbound}
\end{lem}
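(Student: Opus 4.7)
\emph{Proof plan.} The strategy is to deduce (\ref{eqn:-k+}) from a pointwise lower bound on the speed of the Floer trajectory on each excursion interval $(\tau_k^-(s), \tau_k^+(s))$, and then to derive (\ref{eqn:Kbound}) by showing that each such excursion contributes at least $\delta/2$ to the sum in (\ref{eqn:-k+}). On every interval $(\tau_k^-(s), \tau_k^+(s))$ the trajectory $u$ lies outside $\mathcal{B}^\Gamma(\mathfrak{a}, \mathfrak{y}, \varepsilon)$. Proposition \ref{prop:bounds1} ensures $|\eta(\tau)| \leq \mathfrak{y}$ and $|\mathcal{A}^{H_\tau}(u(\tau))| \leq \mathfrak{a}$ for every $\tau \in \mathbb{R}$ (choosing $s = 0$ or $s = 1$ in the definition of $\mathcal{B}^\Gamma$ when $\tau \notin [0,1]$, where the homotopy is constant), so being outside $\mathcal{B}^\Gamma$ forces $\|\nabla^{J_\tau}\mathcal{A}^{H_\tau}(u(\tau))\|_{L^2 \times \mathbb{R}} > \varepsilon$, and hence by the Floer equation $\|\partial_\tau u(\tau)\|_{L^2 \times \mathbb{R}} > \varepsilon$. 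Minkowski's inequality followed by dividing and multiplying by the speed gives
\[
\|u(\tau_k^+(s)) - u(\tau_k^-(s))\|_{L^2 \times \mathbb{R}} \leq \int_{\tau_k^-(s)}^{\tau_k^+(s)} \|\partial_\tau u\|_{L^2 \times \mathbb{R}}\, d\tau \leq \frac{1}{\varepsilon}\int_{\tau_k^-(s)}^{\tau_k^+(s)} \|\partial_\tau u\|_{L^2 \times \mathbb{R}}^2\, d\tau;
\]
summing over $k = 2, \ldots, K$ and using the disjointness of the excursion intervals together with the energy bound $\int_\mathbb{R} \|\partial_\tau u\|_{L^2 \times \mathbb{R}}^2\, d\tau \leq \mathfrak{e}$ from Proposition \ref{prop:bounds1} yields (\ref{eqn:-k+}).

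For the lower bound $\|u(\tau_k^+(s)) - u(\tau_k^-(s))\|_{L^2 \times \mathbb{R}} \geq \delta/2$ required for (\ref{eqn:Kbound}): at $\tau_k^-(s)$ the trajectory has just exited $K_\delta^0 \cup \mathscr{N}_{\textsl{v}_4}^\delta$, and since $|\eta| \leq \mathfrak{y}$, the definition of $K_\delta^0$ forces $\|v(\tau_k^-(s))\|_{L^2} > \textsl{v}_4 + \delta$. Moreover $u(\tau_k^-(s)) \notin \mathcal{U}_\delta^0$: otherwise its projection $P(u(\tau_k^-(s))) = (w,0)$ with $w \in \Sigma$ would satisfy $\|w\| \geq \|v(\tau_k^-(s))\|_{L^2} - \|v(\tau_k^-(s)) - w\|_{L^2} > \textsl{v}_4$, placing $u(\tau_k^-(s))$ into $\mathscr{N}_{\textsl{v}_4}^\delta$, a contradiction. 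Hence $\mathrm{dist}_{L^2 \times \mathbb{R}}(u(\tau_k^-(s)), \Sigma \times \{0\}) \geq \delta$. At $\tau_k^+(s)$, $u \in \mathcal{B}^\Gamma \subseteq K_{\delta/2}^1 \cup \mathcal{U}_{\delta/2}^1$: if $u(\tau_k^+(s)) \in K_{\delta/2}^1$, then $\|v(\tau_k^+(s))\|_{L^2} \leq \max\{\textsl{v}_2(\delta/2), \textsl{v}_3\} \leq \textsl{v}_4$, so the triangle inequality on $L^2$-norms gives $\|v(\tau_k^+(s)) - v(\tau_k^-(s))\|_{L^2} > \delta$; otherwise $u(\tau_k^+(s)) \in \mathcal{U}_{\delta/2}^1$ lies within $W^{1,2} \times \mathbb{R}$-distance $\delta/2$ — hence $L^2 \times \mathbb{R}$-distance $\delta/2$ — of $\Sigma \times \{0\}$, and the reverse triangle inequality gives the separation $\delta - \delta/2 = \delta/2$. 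Combining with (\ref{eqn:-k+}) yields $(K-1)\delta/2 \leq \mathfrak{e}/\varepsilon$, which rearranges into (\ref{eqn:Kbound}). The main technical obstacle is the case analysis at $\tau_k^+(s)$ together with the verification that $u$ actually leaves $\mathcal{U}_\delta^0$ at $\tau_k^-(s)$: both steps rely on the $\delta/2$-to-$\delta$ buffer built into the partition from Proposition \ref{prop:partition} — the smaller radii $\delta/2$ inside $K_{\delta/2}^1$ and $\mathcal{U}_{\delta/2}^1$ versus the larger $\delta$ in $K_\delta^0$ and $\mathscr{N}_{\textsl{v}_4}^\delta$ — which guarantees that each oscillation between the two non-compact regions spans a uniformly positive Euclidean distance.
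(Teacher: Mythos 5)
Your proof is correct and follows essentially the same strategy as the paper: the energy bound together with the lower bound $\|\partial_\tau u\|>\varepsilon$ off $\mathcal{B}^{\Gamma}(\mathfrak{a},\mathfrak{y},\varepsilon)$ controls the total displacement over the excursion intervals, and the $\delta/2$ buffer between $\mathcal{U}^{0}_{\delta/2}$ and the complement of $\mathcal{U}^{0}_{\delta}$ gives the lower bound per oscillation. The only differences are cosmetic: you replace the paper's double Cauchy--Schwarz by the pointwise estimate $\|\partial_\tau u\|\leq \|\partial_\tau u\|^{2}/\varepsilon$, and you spell out as an explicit case analysis what the paper phrases as the closure inclusions $u(\tau_k^{+}(s))\in cl(\mathcal{B}\setminus K^{0}_{\delta})\subseteq\mathcal{U}^{0}_{\delta/2}$ and $u(\tau_k^{-}(s))\in cl((C^{\infty}(S^{1},\mathbb{R}^{2n})\times\mathbb{R})\setminus\mathcal{U}^{0}_{\delta})$.
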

\begin{proof}
By Proposition \ref{prop:bounds1} we have
$$
\|\nabla^{J_{s}} \mathcal{A}^{H_{s}}(u(s))\|^{2}_{L^{2}(\mathbb{R}\times S^{1})}\leq \mathfrak{e}
$$
In view of that we can estimate the time a Floer trajectory $u\in \mathscr{M}^{\Gamma}(\Lambda^{0},\Lambda^{1})$ spends outside $\mathcal{B}^{\Gamma}(\mathfrak{a},\mathfrak{y},\varepsilon)$. Denote by $\mathds{1}_{\mathcal{B}^{\Gamma}(\mathfrak{a},\mathfrak{y},\varepsilon)}$ the characteristic function of $\mathcal{B}^{\Gamma}(\mathfrak{a},\mathfrak{y},\varepsilon)$, then
\begin{align}
\mathfrak{e} & \geq \|\nabla^{J_{s}} \mathcal{A}^{H_{s}}(u(s))\|^{2}_{L^{2}(\mathbb{R}\times S^{1})} \geq \varepsilon^{2}\int_{\mathbb{R}}\big(1-\mathds{1}_{\mathcal{B}^{\Gamma}(\mathfrak{a},\mathfrak{y},\varepsilon)}(u(\tau))\big) d\tau\nonumber \\
\frac{\mathfrak{e}}{\varepsilon^{2}} & \geq \int_{\mathbb{R}}\big(1-\mathds{1}_{\mathcal{B}^{\Gamma}(\mathfrak{a},\mathfrak{y},\varepsilon)}(u(\tau))\big) d\tau. \label{eqn:time}
\end{align}
By definition
$$
 \forall\ \tau\in (\tau_{k}^{-}(s),\tau_{k}^{+}(s)), \qquad u(\tau)\notin \mathcal{B}^{\Gamma}(\mathfrak{a},\mathfrak{y},\varepsilon),
$$
therefore we can calculate
\begin{align*}
\sum_{k=2}^{K} \|u(\tau_{k}^{+}(s)) & -u(\tau_{k}^{-}(s))\|_{L^{2}\times\mathbb{R}} =\\
& = \sum_{k=2}^{K}\sqrt{ \int_{0}^{1}\Big(\int _{\tau_{k}^{-}(s)}^{\tau_{k}^{+}(s)} \|\partial_{\tau}v(\tau,t)\|d\tau\Big)^{2}dt+\Big(\int _{\tau_{k}^{-}(s)}^{\tau_{k}^{+}(s)} |\partial_{\tau}\eta(\tau)|d\tau\Big)^{2}}\\
& \leq \sum_{k=2}^{K}\sqrt{ \int_{0}^{1} \int_{\tau_{k}^{-}(s)}^{\tau_{k}^{+}(s)}1 d\tau \int _{\tau_{k}^{-}(s)}^{\tau_{k}^{+}(s)} (\|\partial_{\tau}v(\tau,t)\|^{2}+|\partial_{\tau}\eta(\tau)|^{2})d\tau dt}\\
& = \sum_{k=2}^{K}\sqrt{  \int_{\tau_{k}^{-}(s)}^{\tau_{k}^{+}(s)}1 d\tau \int _{\tau_{k}^{-}(s)}^{\tau_{k}^{+}(s)} \|\nabla^{J_{\tau}} \mathcal{A}^{H_{\tau}}(u(\tau))\|^{2}_{L^{2}\times\mathbb{R}}d\tau}\\
& \leq \sqrt{ \sum_{k=2}^{K} \int_{\tau_{k}^{-}(s)}^{\tau_{k}^{+}(s)}1 d\tau}\sqrt{\sum_{k=2}^{K} \int _{\tau_{k}^{-}(s)}^{\tau_{k}^{+}(s)} \|\nabla^{J_{\tau}} \mathcal{A}^{H_{\tau}}(u(\tau))\|^{2}_{L^{2}\times\mathbb{R}}d\tau}\\
& \leq \sqrt{ \int_{\mathbb{R}}\big(1-\mathds{1}_{\mathcal{B}(a,b,\varepsilon)}(u(\tau))\big) d\tau}\ \|\nabla^{J_{\tau}} \mathcal{A}^{H_{\tau}}(u(\tau))\|_{L^{2}(\mathbb{R}\times S^{1})}\\
& \leq \frac{\mathfrak{e}}{\varepsilon}
\end{align*}
This proves the first claim.

Due to the fact that $u(s)$ is continuous,
\begin{align*}
u(\tau_{k}^{+}(s))&\in cl(\mathcal{B}(a,b,\varepsilon)\setminus K_{\delta}^{0})\subseteq  \mathcal{U}_{\delta/ 2}^{0},\\
u(\tau_{k}^{-}(s))&\in cl((C^{\infty}(S^{1},\mathbb{R}^{2n})\times \mathbb{R})\setminus \mathcal{U}_{\delta}^{0}).
\end{align*}
Following the definition of $\mathcal{U}_{\delta}^{0}$, we get that
\begin{align*}
& dist_{L^{2}\times\mathbb{R}}(\mathcal{U}_{\delta/2}^{0}, cl(C^{\infty}(S^{1},\mathbb{R}^{2n})\times \mathbb{R}\setminus \mathcal{U}_{\delta}^{0}))=\frac{\delta}{2},\\
& dist_{L^{2}\times\mathbb{R}}(u(\tau_{k}^{-}(s)),u(\tau_{k}^{+}(s)))\geq \frac{\delta}{2}.
\end{align*}
Combining this with the previous result we obtain the claimed bound on the number of oscillations 
\begin{align*}
(K-1) \frac{\delta}{2} & \leq \sum_{k=2}^{K}\|u(\tau_{k}^{+}(s))-u(\tau_{k}^{-}(s))\|_{L^{2}\times\mathbb{R}} \leq \frac{\mathfrak{e}}{\varepsilon}\\
K & \leq \frac{2\mathfrak{e}}{\delta\varepsilon}+1.
\end{align*}
\end{proof}
After proving that the number of oscillations is finite, we will show that the $v$ component of $\mathscr{M}^{\Gamma}(\Lambda^{0},\Lambda^{1})$ is bounded in the $L^{2}$ norm. In the following proposition we first prove the $L^{2}\times \mathbb{R}$ bounds, which immediately imply the $L^{\infty}\times\mathbb{R}$ bounds inside the set of infinitesimal action derivation.

\begin{prop}
Let $\Gamma=\{H_{s},J_{s}\}_{s \in \mathbb{R}}$ be a smooth homotopy of Hamiltonians and almost complex structures defined as in Theorem \ref{twr:ModuliCompact}. Fix a pair $a,b\in\mathbb{R}$.
Then for every two connected components
$$
\Lambda_{0} \subseteq \mathscr{C}(\mathcal{A}^{H_{0}},N) \cap (\mathcal{A}^{H_{0}})^{-1}([a,\infty)),\qquad
\Lambda_{1} \subseteq \Crit(\mathcal{A}^{H_{1}})  \cap (\mathcal{A}^{H_{1}})^{-1}((-\infty,b]),
$$
the corresponding $\mathscr{M}^{\Gamma}(\Lambda^{0},\Lambda^{1})$ is uniformly bounded in the $L^{2}\times \mathbb{R}$ norm.
Moreover, for every $\varepsilon\in (0,\varepsilon_{2})$, where $\varepsilon_{2}$ is as in (\ref{eqn:varepsilon}), the corresponding set
$$
\bigcup_{u\in \mathscr{M}^{\Gamma}(\Lambda^{0},\Lambda^{1})}(u(\mathbb{R})\cap \mathcal{B}^{\Gamma}(\mathfrak{a},\mathfrak{y},\varepsilon))
$$
is uniformly bounded in $W^{1,2}(S^{1})\times \mathbb{R}$ norm.
\label{prop:Floerboundv2}
\end{prop}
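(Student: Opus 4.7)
The plan is to bound $\|u(s)\|_{L^{2}\times\mathbb{R}}$ for an arbitrary $s\in\mathbb{R}$ by tracing the trajectory back to the most recent time $\tau_{1}(s)$ at which $u(\tau_{1}(s))\in K_{\delta}^{0}$ --- where it is automatically bounded by definition \eqref{eqn:K0} --- and then summing the displacements along the alternating segments of $(\tau_{1}(s),s]$ given by the oscillation sequence \eqref{eqn:sequence}. On this interval the trajectory alternates between the complement of $\mathcal{B}^{\Gamma}(\mathfrak{a},\mathfrak{y},\varepsilon)$ (on the sub-intervals $(\tau_{k}^{-}(s),\tau_{k}^{+}(s))$) and the tubular set $\mathscr{N}^{\delta}_{\textsl{v}_{4}}$ (on $(\tau_{1}(s),\tau_{2}^{-}(s))$ and on each $(\tau_{k}^{+}(s),\tau_{k+1}^{-}(s))$), and each type of segment admits its own controlled displacement bound.

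On the segments outside $\mathcal{B}^{\Gamma}$, inequality \eqref{eqn:-k+} of Lemma \ref{lem:Kbound} already provides a cumulative $L^{2}\times\mathbb{R}$-displacement of at most $\mathfrak{e}/\varepsilon$. On the segments inside $\mathscr{N}^{\delta}_{\textsl{v}_{4}}$, Proposition \ref{prop:boundP} applied to the projection $P\circ u$ controls $\|P(u(\tau_{k+1}^{-}))-P(u(\tau_{k}^{+}))\|$ by $\hat M$ times the corresponding action variation; the triangle inequality together with the estimate $\|u-P(u)\|_{L^{2}\times\mathbb{R}}<\delta$ inside $\mathcal{U}_{\delta}^{0}$ then transfers this bound to $u$ at the cost of $2\delta$ per segment. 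Since $s\mapsto\mathcal{A}^{H_{s}}(u(s))$ is monotonically non-decreasing on $\mathbb{R}\setminus[0,1]$ (where $\partial_{s}H_{s}\equiv 0$) and takes values in $[-\mathfrak{a},\mathfrak{a}]$, the individual action variations telescope to at most $2\mathfrak{a}$; combined with the cap $K\leq 2\mathfrak{e}/(\delta\varepsilon)+1$ from \eqref{eqn:Kbound} on the number of oscillations, this produces a uniform $L^{2}\times\mathbb{R}$-bound depending only on the parameters quantified in Lemma \ref{lem:open} and Proposition \ref{prop:bounds1}.

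The main obstacle is the homotopy window $s\in[0,1]$, where Proposition \ref{prop:boundP} does not apply (its proof relies on $\partial_{s}u=\nabla\mathcal{A}^{H}(u)$ with the constant pair $(H,J)=(H_{0},J_{0})$ or $(H_{1},J_{0})$) and where the action is no longer monotone. My plan is to split the trajectory at $s=0$ and $s=1$: run the above argument independently on the constant-Hamiltonian half-lines $(-\infty,0]$ and $[1,\infty)$, and bound the $L^{2}\times\mathbb{R}$-displacement on the middle piece directly by Cauchy--Schwarz and the energy bound \eqref{eqn:energy2},
$$
\|u(1)-u(0)\|_{L^{2}\times\mathbb{R}} \leq \int_{0}^{1}\|\partial_{s}u\|_{L^{2}\times\mathbb{R}}\,ds \leq \sqrt{\mathfrak{e}}.
$$
All resulting constants depend only on the uniform parameters of $\mathsf{H}+\mathscr{O}(\mathsf{H})$ and continuously on $\|J\|_{\infty}$, as claimed.

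Finally, to promote the $L^{2}\times\mathbb{R}$-bound on $v$ to the claimed $W^{1,2}\times\mathbb{R}$-bound on $\bigcup_{u}(u(\mathbb{R})\cap\mathcal{B}^{\Gamma}(\mathfrak{a},\mathfrak{y},\varepsilon))$, one combines the triangle inequality
$$
\|\partial_{t}v\|_{L^{2}} \leq \|\partial_{t}v-\eta X^{H_{s}}(v)\|_{L^{2}} + |\eta|\,\|\nabla H_{s}(v)\|_{L^{2}}
$$
with the defining bound $\|\nabla^{J_{s}}\mathcal{A}^{H_{s}}(v,\eta)\|_{L^{2}\times\mathbb{R}}\leq\varepsilon$ of $\mathcal{B}^{\Gamma}$, with $|\eta|\leq\mathfrak{y}$, and with the linear growth \eqref{eqn:nablaH} of $\nabla H_{s}$ provided by Property \eqref{item:H2}, yielding $\|\partial_{t}v\|_{L^{2}}\leq\varepsilon+\mathfrak{y}(h_{1}+M\|v\|_{L^{2}})$, which together with the $L^{2}$-bound just established closes the proof.
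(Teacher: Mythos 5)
Your proposal is correct and follows essentially the same route as the paper: decompose $(-\infty,s]$ via the oscillation sequence (\ref{eqn:sequence}), bound the segments outside $\mathcal{B}^{\Gamma}(\mathfrak{a},\mathfrak{y},\varepsilon)$ by (\ref{eqn:-k+}), bound the segments in $\mathscr{N}^{\delta}_{\textsl{v}_{4}}$ via Proposition \ref{prop:boundP} together with the telescoping of the monotone action and the cap (\ref{eqn:Kbound}) on the number of oscillations, and handle the homotopy window $[0,1]$ separately by Cauchy--Schwarz and the energy bound, exactly as in (\ref{eqn:01}). The only cosmetic difference is in the final $W^{1,2}$ upgrade, where the paper cites the partition $\mathcal{B}^{\Gamma}\subseteq K^{1}_{\delta/2}\cup\mathcal{U}^{1}_{\delta/2}$ while you redo the direct estimate $\|\partial_{t}v\|_{L^{2}}\leq\varepsilon+\mathfrak{y}(h_{1}+M\|v\|_{L^{2}})$ underlying the definition of $K^{1}_{\delta/2}$; these are equivalent.
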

\begin{proof}
Let us fix $u\in \mathscr{M}^{\Gamma}(\Lambda^{0},\Lambda^{1})$ and $s\in \mathbb{R}$. Using the sequence as in (\ref{eqn:sequence}), we will divide the Floer trajectory and bound it on each part differently.

If $s> 0$, then $(H_{s},J_{s})$ is non-constant on $(-\infty,s]$ namely it varies on the interval $[0,\min\{1,s\}]$. We can bound there the $L^{2}$ norm of the $v$ component using a similar approach as in Lemma \ref{lem:Kbound} obtaining
\begin{align}
\|v(\min\{1,s\})- v(0)\|_{L^{2}(S^{1})} & \leq \sqrt{|\min\{1,s\}| \int^{\min\{1,s\}}_{0} \|\partial_{s}v(\tau)\| ^{2}_{L^{2}}d\tau}\nonumber \\
 & \leq \|\nabla^{J_{s}} \mathcal{A}^{H_{s}}(u(s))\|_{L^{2}(S^{1}\times\mathbb{R})}\nonumber\\
 & \leq \sqrt{\mathfrak{e}} \label{eqn:01}
\end{align}

By (\ref{seqn1}) and (\ref{seqn3})
$$
\tau\in (\tau_{1}(s),\tau_{2}^{-}(s))\cup \Big(\bigcup_{k=2}^{K}(\tau_{k}^{+}(s),\tau_{k+1}^{-}(s))\Big) \quad \Longrightarrow \quad u(\tau) \in \mathscr{N}_{\textsl{v}_{4}}^{\delta}.
$$
That means that for any interval
$$
I\subseteq  \Big((\tau_{1}(s),\tau_{2}^{-}(s))\cup\Big(\bigcup_{k=2}^{K}(\tau_{k}^{+}(s),\tau_{k+1}^{-}(s))\Big)\Big)\setminus (0,1)
$$
one can apply Proposition \ref{prop:boundP}. Indeed, whenever $I \cap (0,1)=\emptyset$, then for all $\tau\in I,\ H^{\tau}$ is constant and equal either $H^{0}$ or $H^{1}$. Moreover, on $(-\infty,0)\cup (1,\infty),\ \mathcal{A}^{H_{\tau}}(u(\tau))$ is monotonically increasing and by Proposition \ref{prop:bounds1}
$$
|\mathcal{A}^{H_{\tau}}(u(\tau))| \leq \mathfrak{a}\qquad \forall\ \tau \in \mathbb{R}.
$$
Therefore, we can apply Proposition \ref{prop:boundP} and combine it with (\ref{eqn:Kbound}) and (\ref{eqn:01}) to estimate
\begin{align}
\|v(\tau_{1}(s)) & -v(\tau_{2}^{-}(s))\|_{L^{2}}+\sum_{k=2}^{K}\|v(\tau_{k}^{+}(s))-v(\tau_{k+1}^{-}(s))\|_{L^{2}} \leq \nonumber \\
& \leq  \tilde{M} \big(\mathcal{A}^{H^{0}}(u(0))- \mathcal{A}^{H^{0}}(u(\Lambda^{0}))+ \mathcal{A}^{H^{1}}(\Lambda^{1})- \mathcal{A}^{H^{1}}(u(1))\big)\nonumber \\
& + \|v(1)- v(0)\|_{L^{2}}+ K 2 \delta\nonumber \\
& \leq \sqrt{\mathfrak{e}} +4\tilde{M} \mathfrak{a} + 4\tfrac{\mathfrak{e}}{\varepsilon}+ 2\delta. \label{eqn:PL2}
\end{align}

Now we can finally estimate $\|v(s)\|_{L^{2}}$ by using the definition of $K^{0}_{\delta}$ in (\ref{eqn:K0}) and combining it with the results from (\ref{eqn:-k+}) and (\ref{eqn:PL2}) to obtain
$$
\|v(s)\|_{L^{2}}  \leq \|v(\tau_{1}(s))\|_{L^{2}} +\|v(\tau_{1}(s))-v(s)\|_{L^{2}}
 \leq \textsl{v}_{4}+\sqrt{\mathfrak{e}} +4\tilde{M} \mathfrak{a} + 5(\tfrac{\mathfrak{e}}{\varepsilon}+\delta),
$$
where the additional $2\delta$ comes from the possibility that $u(s)\in \mathscr{N}^{\delta}_{\textsl{v}_{4}}$. We have chosen $s\in \mathbb{R}$ and $u\in\mathscr{M}^{\Gamma}(\Lambda^{0},\Lambda^{1})$ arbitrary, therefore the above inequality along with the uniform bound on $\eta$ obtained in Proposition \ref{prop:bounds1} establishes that for all $u\in\mathscr{M}^{\Gamma}(\Lambda^{0},\Lambda^{1})$
$$
\|u(s)\|_{L^{2}\times\mathbb{R}} \leq \mathfrak{y}+\textsl{v}_{4}+\sqrt{\mathfrak{e}} +4\tilde{M} \mathfrak{a} + 5(\tfrac{\mathfrak{e}}{\varepsilon}+\delta)\qquad \forall\ s\in \mathbb{R},
$$
which proves the first claim.

Now take $u\in\mathscr{M}^{\Gamma}(\Lambda^{0},\Lambda^{1})$ and consider $s\in \mathbb{R}$, such that $u(s)\in \mathcal{B}^{\Gamma}(\mathfrak{a},\mathfrak{y},\varepsilon)$. We have just shown that $\|u(s)\|_{L^{2}\times\mathbb{R}}$ is uniformly bounded. Moreover, by assumption
$$
\mathcal{B}^{\Gamma}(\mathfrak{a},\mathfrak{y},\varepsilon) \subseteq K^{1}_{\delta / 2}\cup\mathcal{U}^{1}_{\delta/ 2},
$$
and by definition of $ K^{1}_{\delta / 2}$ as in (\ref{eqn:K1}) one has:
$$
\sup_{x\in K^{1}_{\delta/ 2}}\|x\|_{W^{1,2}\times\mathbb{R}} \leq \varepsilon_{2}+ \textsl{v}_{4}+\mathfrak{y}(1+h_{1}+M\textsl{v}_{4}).
$$
Therefore, we obtain a uniform bound on the $W^{1,2}\times\mathbb{R}$ norm:
\begin{align*}
\|u(s)\|_{W^{1,2}\times\mathbb{R}} \leq \textsl{v}_{4} + \max\{\varepsilon_{2}+\mathfrak{y}(1+h_{1}+M\textsl{v}_{4}),\sqrt{\mathfrak{e}} +4\tilde{M} \mathfrak{a}+ 6\delta + 5\tfrac{\mathfrak{e}}{\varepsilon}\}.
\end{align*}
Naturally, uniform bounds on the $W^{1,2}\times\mathbb{R}$ norm induce uniform bounds on the $L^{\infty}\times \mathbb{R}$ norm.
\end{proof}
First observe that uniform $W^{1,2}\times\mathbb{R}$ bounds on 
$$
\bigcup_{u\in \mathscr{M}^{\Gamma}(\Lambda^{0},\Lambda^{1})}(u(\mathbb{R})\cap \mathcal{B}^{\Gamma}(\mathfrak{a},\mathfrak{y},\varepsilon)),
$$
imply that the set
$$
\bigcup_{u\in \mathscr{M}^{\Gamma}(\Lambda^{0},\Lambda^{1})}\lim_{s\to +\infty}u(s) \subseteq \Lambda_{1},
$$
is also uniformly bounded, even in the case $\Lambda_{1}$ is non-compact, i.e. $\Lambda_{1}=H_{1}^{-1}(0)\times\{0\}$.

Moreover, note that in the bounds obtained in the above proposition $\mathfrak{y},\mathfrak{e}$ and $\varepsilon_{2}$ depend continuously on $\|J\|_{L^{\infty}}$, whereas $\textsl{v}_{4}, \delta$ and $\mathfrak{a}$ do not depend on $\|J\|_{L^{\infty}}$, but only of the fact that $\Gamma$ satisfies (\ref{eqn:Hs2}) and all the Hamiltonians are in the set $\mathsf{H}+\mathscr{O}(\mathsf{H})$ and admit a common set of parameters satisfying Properties (\ref{item:H1}), (\ref{item:H2}) and (\ref{item:H3}).
\section{Maximum principle}
\label{sec:MaxPrinc}
In the previous section we have established that the Floer trajectories within the set of infinitesimal action derivation are bounded.  In this section we would like to use Aleksandrov's maximum principle to find $L^{\infty}$ bounds on the Floer trajectories outside $\mathcal{B}^{\Gamma}(\mathfrak{a},\mathfrak{y},\varepsilon)$, following the argument of Abbondandolo and Schwartz in \cite{Abbon2009}. 

\begin{twr}(\textbf{Aleksandrov's maximum principle})\\
Let $\Omega$ be a domain in $\mathbb{R}^{2}$ and $\rho:\Omega \to \mathbb{R}$ a $C^{2}$ function satisfying the elliptic differential inequality
$$
\triangle \rho + \langle h, \nabla \rho \rangle \geq f,
$$
where $h$ and $f$ are functions $h: \Omega \to \mathbb{R}^{2}$, $f:\Omega \to \mathbb{R}$. Then there exists $C>0$
$$
\sup_{\Omega} \rho \leq \sup_{\partial \Omega}\rho +C(\|h\|_{L^{2}(\Omega)})\|f^{-}\|_{L^{2}(\Omega)},
$$
provided $h$ and the negative part $f$ are in $L^{2}(\Omega)$.
\end{twr}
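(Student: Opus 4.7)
The plan is to derive the estimate by the classical Aleksandrov--Bakelman--Pucci (ABP) argument, with the first-order term $\langle h,\nabla\rho\rangle$ absorbed into the zeroth-order piece. After replacing $\rho$ by $\rho - \sup_{\partial\Omega}\rho$ one may assume $\rho\leq 0$ on $\partial\Omega$, so that the task reduces to bounding $M:=\sup_\Omega \rho^+$. First I would introduce the upper contact set $\Gamma^+\subseteq\{\rho>0\}$ where $\rho$ coincides with its least concave majorant (taken over $\{\rho>0\}$ and extended by $0$ outside). On $\Gamma^+$, Aleksandrov's differentiability theorem guarantees that $\rho$ is twice differentiable almost everywhere with $D^2\rho\leq 0$; in particular $\Delta\rho\leq 0$ and, since we are in dimension $n=2$, the AM--GM inequality gives $-\Delta\rho\geq 2\sqrt{\det(-D^2\rho)}$.

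Second, I would exploit the gradient map $\nabla\rho\colon \Gamma^+\to\mathbb{R}^2$. A supporting-hyperplane argument at a point where $\rho$ attains its maximum shows that $\nabla\rho(\Gamma^+)$ contains a disc of radius on the order of $M/\mathrm{diam}(\Omega)$. The area formula then yields
\[
c\,M^2 \leq |\nabla\rho(\Gamma^+)| \leq \int_{\Gamma^+}\det(-D^2\rho)\,dx.
\]
On the other hand, the differential inequality combined with the AM--GM step gives pointwise on $\Gamma^+$
\[
2\sqrt{\det(-D^2\rho)} \leq -\Delta\rho \leq \langle h,\nabla\rho\rangle - f \leq |h||\nabla\rho| + f^-,
\]
so $\det(-D^2\rho)\leq \tfrac{1}{2}|h|^2|\nabla\rho|^2 + \tfrac{1}{2}(f^-)^2$.

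Substituting into the area-formula estimate, the $(f^-)^2$ part is already controlled by $\|f^-\|_{L^2(\Omega)}^2$. To deal with the cross term $|h|^2|\nabla\rho|^2$, I would invoke Pucci's absorption trick: perform a concave increasing change of variable $\tilde\rho=\varphi(\rho)$ whose nonlinearity is tuned so that $\varphi''/\varphi'$ cancels the contribution of $h$, reducing the problem to the first-order-free ABP estimate. Equivalently, one can close the loop by the covering-and-iteration argument of Gilbarg--Trudinger, Theorem 9.1. Either route yields a bound of the desired form with a constant $C$ depending only on $\|h\|_{L^2(\Omega)}$ (and on $\mathrm{diam}(\Omega)$, which is fixed).

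The main obstacle is precisely this handling of the gradient coefficient: without $h$, the above inequalities close immediately with a universal constant depending only on $\mathrm{diam}(\Omega)$, whereas the presence of $h$ forces either the nonlinear substitution or the Pucci iteration, and the resulting dependence of $C$ on $\|h\|_{L^2}$ is nonlinear (typically exponential). The remaining ingredients---Aleksandrov's a.e.\ second differentiability on the contact set, the supporting-hyperplane covering of a disc by $\nabla\rho(\Gamma^+)$, and the $n=2$ AM--GM step---are standard and can be imported verbatim from the textbook treatment.
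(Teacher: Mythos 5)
The paper does not prove this statement: it is quoted as a classical result (the Aleksandrov--Bakelman--Pucci estimate, as in Gilbarg--Trudinger, Theorem 9.1, and as used by Abbondandolo--Schwarz), so there is no internal proof to compare against. Your sketch is a correct outline of the standard ABP argument: normalization, the upper contact set, the two-dimensional AM--GM step $-\triangle\rho\geq 2\sqrt{\det(-D^{2}\rho)}$, the inclusion of a disc of radius comparable to $M/\mathrm{diam}(\Omega)$ in $\nabla\rho(\Gamma^{+})$, the area formula, and the absorption of the drift term via Pucci's change of variable or the weighted normal mapping of Gilbarg--Trudinger, Lemma 9.4 (which is where the nonlinear, typically exponential, dependence of $C$ on $\|h\|_{L^{2}}$ enters). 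Two small remarks: since $\rho$ is assumed $C^{2}$, Aleksandrov's a.e.\ second differentiability theorem is not needed on the contact set; and the constant necessarily also depends on $\mathrm{diam}(\Omega)$ (or $|\Omega|^{1/2}$), a dependence suppressed in the paper's formulation but harmless in its application, where the relevant domains have uniformly bounded extent.
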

In order to apply Aleksandrov's maximum principle and find $L^{\infty}$ bounds on the Floer trajectories, one first has to construct a function $F$ with compact level sets, whose composition with a Floer trajectory $u$ satisfies the elliptic differential inequality
$$
\triangle (F\circ u) + \langle h, \nabla (F\circ u) \rangle \geq f 
$$
outside of the set of infinitesimal action derivation, i.e. on every connected component $\Omega \subseteq (\mathbb{R}\setminus \mathcal{B}^{\Gamma}(\mathfrak{a},\mathfrak{y},\varepsilon)) \times S^{1} $.
Having such an inequality, one can apply the Aleksandrov maximum principle, which gives us
$$
\sup_{\Omega} (F\circ u) \leq \sup_{\partial \Omega}(F\circ u)+ C(\|h\|_{L^{2}(\Omega)})\|f^{-}\|_{L^{2}(\Omega)},
$$
provided $h$ and the negative part of $f$ are in $L^{2}(\Omega)$.

The core of this method is to find a function satisfying all the required properties. The classical approach is to use plurisubhamonic functions. 
\begin{define}
Let $(M,\omega)$ be a symplectic manifold and $J$ an $\omega$-compatible almost complex structure.
Then a $C^{2}$ function $F:M\to\mathbb{R}$ is called plurisubharmonic if
$$
-dd^{\mathbb{C}}F=\omega,
$$
where $d^{\mathbb{C}}F=dF\circ J$.
\end{define}
The reason one uses plurisubharmonic functions is because their composition with a $J$-holomorphic curve trivially satisfies the elliptic inequality. Unfortunately, in the case of Floer trajectories proving the elliptic inequality is a little more complicated. One has to investigate how the plurisubharmonic function interacts with the Hamiltonian vector field, in particular one needs to understand the functions
$$
dF(X^{H})\qquad d^{\mathbb{C}}F(X^{H}),
$$
which appear if we calculate $d^{\mathbb{C}}(F\circ u)$. Moreover, the framework used in \cite{Abbon2009} cannot be translated directly to our case, since our hypersurface is not a boundary of a compact Liouville domain. However, in Proposition \ref{prop:max}, we will present a setting, which applies to the system from Theorem \ref{twr:ModuliCompact}.

\subsection{Elliptic differential inequality}
Let us put ourselves in the setting of Theorem \ref{twr:ModuliCompact}. In the previous section we have established that the Floer trajectories within the set of infinitesimal action derivation are bounded. More precisely, by Proposition \ref{prop:Floerboundv2} for every $\varepsilon$ as in (\ref{eqn:varepsilon}) the set 
$$
\bigcup_{u\in \mathscr{M}^{\Gamma}(\Lambda^{0},\Lambda^{1})}(u(\mathbb{R})\cap \mathcal{B}^{\Gamma}(\mathfrak{a},\mathfrak{y},\varepsilon)),
$$
is uniformly bounded in the $W^{1,2}(S^{1})\times \mathbb{R}$ norm. This enables us to choose a compact subset $K^{\infty}\subseteq \mathbb{R}^{2n}$, such that 
\begin{align}
& \mathcal{V}\cup K \subseteq K^{\infty}, \label{eqn:VHK}\\
\forall\ (v,\eta)\in & \bigcup_{u\in \mathscr{M}^{\Gamma}(\Lambda^{0},\Lambda^{1})}(u(\mathbb{R})\cap \mathcal{B}^{\Gamma}(\mathfrak{a},\mathfrak{y},\varepsilon)) \qquad v(S^{1}) \subseteq K^{\infty}.\label{eqn:BK}
\end{align}
\begin{figure}[t]
\centering
\def\svgwidth{0.7\columnwidth}  
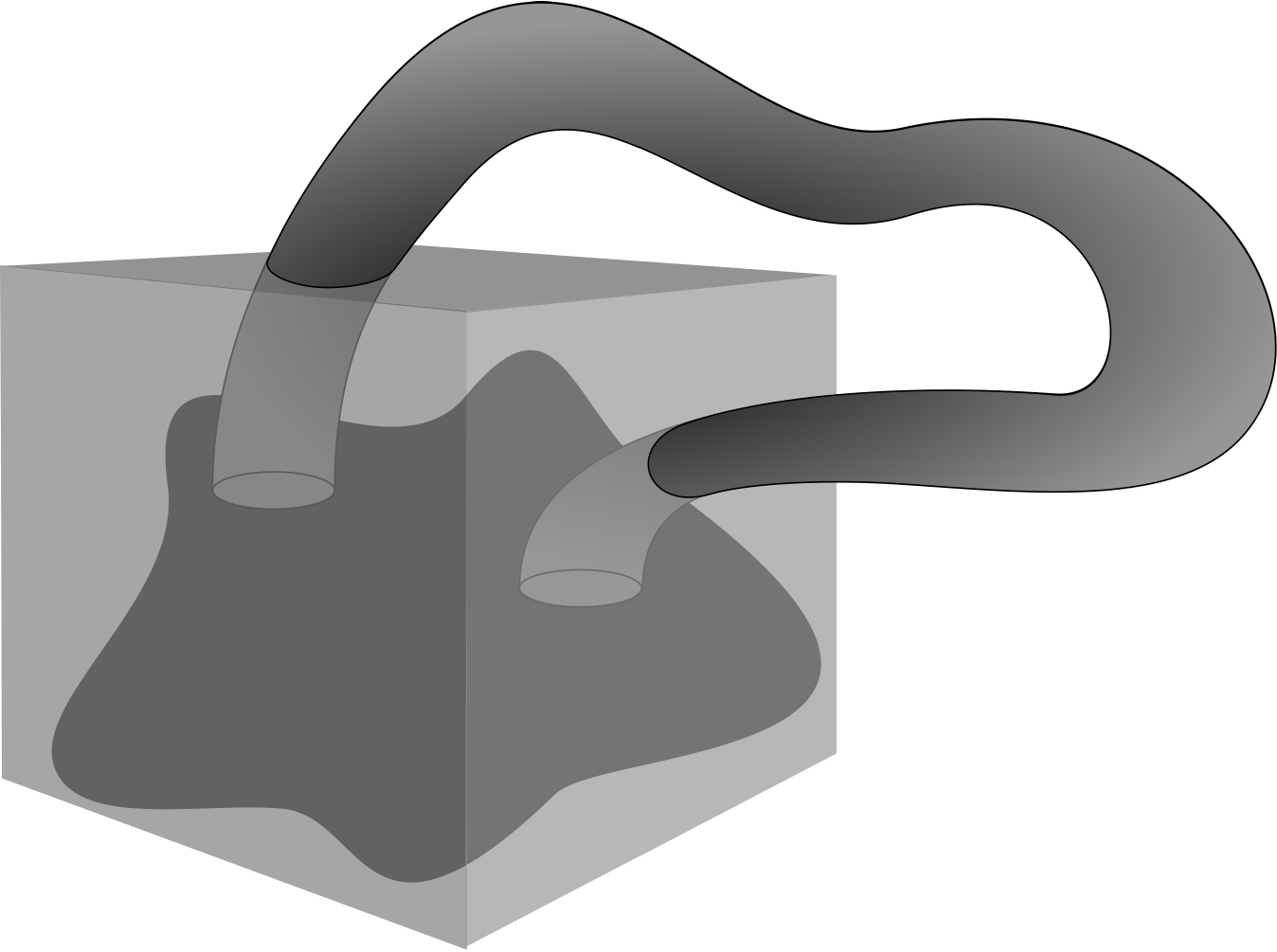
\caption{Construction of the set $\Omega$ as corresponding to the part of a Floer trajectory outside compact set $K^{\infty}$.}
\end{figure}
However, to establish uniform bounds on the whole moduli space we have to investigate the Floer trajectories outside of the compact set $K^{\infty}$. Now, if we choose $u\in \mathscr{M}^{\Gamma}(\Lambda_{0},\Lambda_{1}), u(s,t)=(v(s,t), \eta(s))$ and take a connected component
\begin{equation}
\Omega\subseteq v^{-1} (\mathbb{R}^{2n}\setminus K^{\infty}),
\label{eqn:Omg}
\end{equation}
then by (\ref{eqn:BK}) and (\ref{eqn:VHK}) the following holds for all $(s,t) \in \Omega$
\begin{align*}
H^{s}(v(s,t)) & = H^{0}(v(s,t))= H^{1}(v(s,t))\\
J_{t}(v(s,t),\eta(s)) & = J_{0}\\
\|\nabla \mathcal{A}^{H}(u(s))\|_{L^{2}\times \mathbb{R}} & > \varepsilon.
\end{align*}

In other words, we can assume that the Hamiltonian and the almost complex structure are constant on $u(\Omega)$ and the action derivation is uniformly bounded away from $0$. On the other hand, the radial function
$$
F(x):=\tfrac{1}{4}\|x\|^{2},
$$
on $(\mathbb{R}^{2n},\omega_{0})$ with the standard complex structure $J_{0}$ is an example of a plurisubharmonic function with compact level sets. 

In the following Proposition, we will prove that the radial function composed with a Floer trajectory 
satisfies the assumptions of Aleksandrov's maximum principle on $\Omega\subseteq v^{-1} (\mathbb{R}^{2n}\setminus K^{\infty})$, provided $u$ is globally bounded in the $L^{2}\times \mathbb{R}$ norm and the Hamiltonian function satisfies 
$$
\sup_{x\in\mathbb{R}^{2n}}\|D^{3}H\|\|x\| \leq L.
$$
This will allow us to apply Aleksandrov's maximum principle and prove that the radial function on the Floer trajectory outside of the set $\mathcal{B}^{\Gamma}(\mathfrak{a},\mathfrak{y},\varepsilon)$ is bounded, thus concluding the proof of Theorem \ref{twr:ModuliCompact}.

\begin{prop}
Let $H:\mathbb{R}^{2n}\to\mathbb{R}$ be a Hamiltonian and $u: \mathbb{R}\times S^{1} \to \mathbb{R}^{2n+1},\ u(s)=(v(s),\eta(s))$ be a solution to the Floer equation corresponding to the constant almost complex structure $J\equiv J_{0}$. Then for the radial, plurisubharmonic function $F:\mathbb{R}^{2n}\to\mathbb{R}$,
$$
F(x):= \tfrac{1}{4}\|x\|^{2}
$$
there exists a function $f:\mathbb{R}\times S^{1} \to \mathbb{R}$, such that
$$
\triangle (F\circ v) \geq f(s,t).
$$
Moreover, suppose the following conditions are satisfied:
\begin{enumerate}
\item $H$ satisfies (\ref{item:H2})
\item there exist constants $\mathfrak{e},\mathbf{v}, \mathfrak{y} >0$, such that
$$
\|\nabla \mathcal{A}^{H}(u)\|_{L^{2}(\mathbb{R}\times S^{1})}^{2} \leq \mathfrak{e},\qquad
\|v(s)\|_{L^{2}} \leq \mathbf{v}, \qquad |\eta(s)|  \leq \mathfrak{y}, \qquad \forall s\in \mathbb{R}.
$$
\item there exists a positive constant $\varepsilon>0$ and a connected, open subset $\Omega\subseteq \mathbb{R}\times S^{1}$, such that if
we define
$$
s_{0}:=\inf_{(s,t)\in\Omega}s, \qquad s_{1}:=\sup_{(s,t)\in\Omega}s,
$$
then for all $s\in (s_{0},s_{1})$ one has $\|\nabla\mathcal{A}^{H}(u(s))\|>\varepsilon$.
\end{enumerate}
Then 
$$
\| f \|_{L^{2}(\Omega)}\leq C(\mathfrak{e},\mathfrak{y},\mathbf{v},\varepsilon)<+\infty
$$
where the constant $C$ depends only on the parameters of the Hamiltonian and the constants $\varepsilon, \mathbf{v}, \mathfrak{y}, \mathfrak{e}$ in a continuous way.
\label{prop:max}
\end{prop}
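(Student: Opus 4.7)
The strategy will be to compute $\triangle(F\circ v)$ directly from the Floer equations, absorb the Hamiltonian cross terms into the non-negative pseudoholomorphic contribution via Young's inequality, and bound the resulting $L^{2}$ norm using the three hypotheses and a Sobolev embedding on $S^{1}$.

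First, I will observe that for $F(x)=\tfrac14\|x\|^{2}$ one has $dF_{v}=\tfrac12\langle v,\cdot\rangle$ and $\mathrm{Hess}_{v}F=\tfrac12\,\mathrm{Id}$, so the chain rule yields
\[ \triangle(F\circ v)=\tfrac12\bigl(\|v_{s}\|^{2}+\|v_{t}\|^{2}\bigr)+\tfrac12\langle v,\,v_{ss}+v_{tt}\rangle. \]
Since $J\equiv J_{0}$, the Floer equation reads $v_{s}+J_{0}v_{t}=\eta\nabla H(v)$ (using $J_{0}X^{H}=\nabla H$). Differentiating this identity once in $s$ and once in $t$, the mixed partials $v_{st},v_{ts}$ cancel, and $dX^{H}=-J_{0}\mathrm{Hess}\,H$ gives
\[ v_{ss}+v_{tt}=(\partial_{s}\eta)\nabla H(v)+\eta\bigl(\mathrm{Hess}\,H(v_{s})-J_{0}\mathrm{Hess}\,H(v_{t})\bigr). \]
Condition (\ref{item:H2}) supplies the uniform bound $\|\mathrm{Hess}\,H\|\le M$, and Young's inequality applied to $|\eta|\|v\|\|\mathrm{Hess}\,H\|(\|v_{s}\|+\|v_{t}\|)$ absorbs a controlled fraction of the gradient term, producing the pointwise lower bound
\[ \triangle(F\circ v)\ge f(s,t):=\tfrac12(\partial_{s}\eta)\langle v,\nabla H(v)\rangle-C_{1}\eta^{2}\|v\|^{2}, \]
for an explicit constant $C_{1}=C_{1}(M)$.

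Next I will bound $\|f\|_{L^{2}(\Omega)}$. Hypothesis (3) together with the energy bound $\int_{\mathbb{R}}\|\nabla\mathcal{A}^{H}(u(s))\|^{2}ds\le\mathfrak{e}$ forces $s_{1}-s_{0}\le\mathfrak{e}/\varepsilon^{2}$, so $|\Omega|$ is finite. From $\partial_{s}\eta=-\int_{0}^{1}H(v)\,dt$, the quadratic growth $|H(x)|\le h_{0}+h_{1}\|x\|+\tfrac12M\|x\|^{2}$ (an analytic consequence of (\ref{item:H2})) and $\|v(s)\|_{L^{2}(S^{1})}\le\mathbf{v}$, one obtains a uniform bound $|\partial_{s}\eta(s)|\le C_{\eta}$. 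Rewriting the Floer equation as $v_{t}=J_{0}v_{s}+\eta X^{H}(v)$ and using $\|X^{H}(v)\|\le h_{1}+M\|v\|$ together with $\|v_{s}\|_{L^{2}(\Omega)}^{2}\le\mathfrak{e}$ yields
\[ \|v_{t}\|_{L^{2}(\Omega)}\le\sqrt{\mathfrak{e}}+\mathfrak{y}(h_{1}+M\mathbf{v})\sqrt{|\Omega|}, \]
a bound depending only on $\mathfrak{e},\mathfrak{y},\mathbf{v},\varepsilon$. The Sobolev embedding $W^{1,2}(S^{1})\hookrightarrow L^{\infty}(S^{1})$, in the concrete form $\|v(s)\|_{L^{\infty}(S^{1})}\le\|v(s)\|_{L^{2}(S^{1})}+\|v_{t}(s)\|_{L^{2}(S^{1})}$, will then control
\[ \int_{\Omega}\|v\|^{4}\,ds\,dt\le 2\mathbf{v}^{2}\bigl(|\Omega|\mathbf{v}^{2}+\|v_{t}\|_{L^{2}(\Omega)}^{2}\bigr), \]
and an analogous estimate handles $\int_{\Omega}\langle v,\nabla H(v)\rangle^{2}\,ds\,dt$. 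Combining these gives $\|f\|_{L^{2}(\Omega)}\le C(\mathfrak{e},\mathfrak{y},\mathbf{v},\varepsilon)$ with continuous dependence on all parameters.

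The main obstacle is the cross term $\eta(\mathrm{Hess}\,H(v_{s})-J_{0}\mathrm{Hess}\,H(v_{t}))$ produced by the Hamiltonian perturbation: it has no a priori sign and mixes the first derivative of $v$ with $v$ itself. Crucially, it is precisely the uniform Hessian bound from (\ref{item:H2}) that permits Young's inequality to absorb this term into the non-negative piece $\|v_{s}\|^{2}+\|v_{t}\|^{2}$; without it the lower bound $f$ would retain a first derivative of $v$ and fail to lie in $L^{2}(\Omega)$. Once the absorption is secured, the remaining $L^{2}$ estimates are routine bookkeeping, made tractable by hypothesis (3), which makes $\Omega$ \emph{short in $s$} and so converts pointwise-in-$s$ bounds into integrated ones.
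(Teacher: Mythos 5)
Your proof is correct, and although it shares the two essential mechanisms of the paper's argument --- eliminating every derivative of $v$ from the lower bound $f$, and converting hypothesis (3) into the length bound $s_{1}-s_{0}\leq\mathfrak{e}/\varepsilon^{2}$ --- it follows a genuinely different route in both halves. (One inessential remark: with the paper's conventions $-J_{0}X^{H}=\nabla H$, so the Floer equation reads $v_{s}+J_{0}v_{t}=-\eta\nabla H(v)$; this flips a sign in your $f$ but affects none of the estimates, which only use absolute values.) For the pointwise inequality, the paper computes $-dd^{\mathbb{C}}(F\circ v)$ and completes the square \emph{exactly}, obtaining $f=\eta^{2}f_{1}(v)+\partial_{s}\eta\,f_{2}(v)$ with $f_{1},f_{2}$ explicit functions on $\mathbb{R}^{2n}$ of quadratic growth (Lemma \ref{lem:f1}); you compute $\triangle(F\circ v)$ directly from the chain rule and absorb the cross term $\eta\langle v,\mathrm{Hess}\,H(v_{s})-J_{0}\mathrm{Hess}\,H(v_{t})\rangle$ by Young's inequality into the nonnegative piece $\tfrac12(\|v_{s}\|^{2}+\|v_{t}\|^{2})$, which is the same idea in inequality form and produces a simpler $f$. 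The more substantial divergence is in the $L^{2}(\Omega)$ estimate: the paper bounds $f$ in $W^{1,1}(\Omega)$ and invokes the two-dimensional embedding $W^{1,1}(\Omega)\hookrightarrow L^{2}(\Omega)$, which forces it to differentiate $f_{1},f_{2}$ and hence to use the $D^{3}H$ bound of (\ref{item:H2}) explicitly, and whose constant is in principle domain-dependent for the varying components $\Omega$; you instead estimate $\|f\|_{L^{2}(\Omega)}$ directly, controlling $\int_{\Omega}\|v\|^{4}$ through the one-dimensional embedding $W^{1,2}(S^{1})\hookrightarrow L^{\infty}(S^{1})$ in the $t$-variable (whose constant is universal) together with the bound on $\|v_{t}\|_{L^{2}}$ extracted from the Floer equation, the energy bound and the linear growth of $\nabla H$. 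Your route is therefore somewhat more elementary and more robust: it uses (\ref{item:H2}) only through the induced bounds (\ref{eqn:HessH})--(\ref{eqn:H0}), never the third derivative of $H$, and it sidesteps the uniformity question for the Sobolev constant on $\Omega$. What the paper's version buys in exchange is the explicit pair $(f_{1},f_{2})$ with their growth and derivative estimates, recorded separately in the appendix.
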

\begin{proof}
From the Floer equations we have
\[
\partial_{s}v = -J(\partial_{t}v-\eta X^{H})\nonumber, \qquad \partial_{t}v = J\partial_{s}v +\eta X^{H}.\label{eqn:parT}
\]
If we plug the above into $d^{\mathbb{C}}(F\circ v)$ we obtain
\begin{align*}
d^{\mathbb{C}}(F\circ v) & = dF(\partial_{t}v)ds-dF(\partial_{s}v)dt\\
& = dF(J\partial_{s}v +\eta X^{H})ds+dF(J(\partial_{t}v-\eta X^{H}))dt\\
& = v^{*}(d^{\mathbb{C}}F)+\eta(dF(X^{H})ds-d^{\mathbb{C}}F(X^{H})dt)\\
dd^{\mathbb{C}}(F\circ v) & = v^{*}(dd^{\mathbb{C}}F)-\Big(\eta d(dF(X^{H}))(\partial_{t}v)+\partial_{s}\eta\ d^{\mathbb{C}}F(X^{H})+ \eta d(d^{\mathbb{C}}F(X^{H}))(\partial_{s}v)\Big)ds\wedge dt
\end{align*}
Let us consider the two parts of the above expression separately. Using the $\omega_{0}$ compatibility of $J$ and plurisubharmonicity of $F$, we can compute
$$
v^{*}(-dd^{\mathbb{C}}F) = \omega_{0}(\partial_{s}v,\partial_{t}v)ds\wedge dt = \omega_{0}(\partial_{s}v,J\partial_{s}v +\eta X^{H})ds\wedge dt = (\|\partial_{s}v\|^{2}+\eta dH(\partial_{s}v))ds \wedge dt.
$$
Let us now combine the above results together with the fact that
$$
-dd^{\mathbb{C}}(F\circ v)=\triangle (F\circ v)ds\wedge dt,
$$
and compute
\begin{align*}
\triangle (F\circ v) & = \omega_{0}(\partial_{s}v,\partial_{t}v) +\eta d(dF(X^{H})(\partial_{t}v)+ \partial_{s}\eta\ d^{\mathbb{C}}F(X^{H})+ \eta d(d^{\mathbb{C}}F(X^{H}))(\partial_{s}v) \\
& = \|\partial_{s}v\|^{2}+\eta (dH(\partial_{s}v)+d(d^{\mathbb{C}}F(X^{H}))(\partial_{s}v)+d(dF(X^{H}))(J\partial_{s}v +\eta X^{H})\\
& \hspace{8.15cm} +\partial_{s}\eta\ d^{\mathbb{C}}F(X^{H})\\
& = \|\partial_{s}v +\eta(\nabla H +\nabla (d^{\mathbb{C}}F(X^{H}))-J\nabla(dF(X^{H}))  )\|^{2}+\eta^{2}d(dF(X^{H}))(X^{H})\\
& \hspace{1.9cm} - \eta^{2}\|\nabla H +\nabla (d^{\mathbb{C}}F(X^{H}))-J\nabla(dF(X^{H})) \|^{2}+\partial_{s}\eta\ d^{\mathbb{C}}F(X^{H})\\
& \geq \eta^{2}(d(dF(X^{H}))(X^{H}) -\|\nabla H\|^{2} -\|\nabla (d^{\mathbb{C}}F(X^{H}))\|^{2}-\|\nabla(dF(X^{H}))\|^{2})\\
& \hspace{8.15cm} +\partial_{s}\eta\ d^{\mathbb{C}}F(X^{H}).
\end{align*}
To simplify we will introduce the following notation:
\begin{align}
f_{1}(x) & :=d_{x}(dF(X^{H}))(X^{H}) -\|\nabla H(x)\|^{2} -\|\nabla (d^{\mathbb{C}}_{x}F(X^{H}))\|^{2}-\|\nabla(dF(X^{H}))(x)\|^{2}, \label{eqn:f1}\\
f_{2}(x) & := d_{x}^{\mathbb{C}}F(X^{H}),\label{eqn:f2}\\
f(s,t) & :=  \eta^{2}(s)f_{1}(v(s,t))+\partial_{s}\eta f_{2}(v(s,t)). \nonumber
\end{align}
We will prove that $f \in L^{2}(\Omega)$ by first proving that $f \in W^{1,1}(\Omega)$ and then using the Sobolev embedding
$$
W^{1,1}(\Omega) \hookrightarrow L^{2}(\Omega).
$$
Let us now analyze the assumptions on $H$ and how they imply the boundedness of $f$ in $W^{1,1}$. Property (\ref{item:H2}) implies quadratic behavior of $H$, which in turn forces $|f_{1}(x)|$ and $|f_{2}(x)|$ to be maximally of order 2 in $\|x\|$, and $\|\nabla f_{1}(x)\|$ and $\|\nabla f_{2}(x)\|$ to be linear in $\|x\|$. In other words by Lemma \ref{lem:f1}, there exist constants $\alpha_{1},\alpha_{2},\alpha_{3},\alpha_{4}>0$, which can be expressed in terms of constants from (\ref{eqn:HessH}), (\ref{eqn:nablaH}) and (\ref{eqn:H0}), such that the following holds for all $x\in\mathbb{R}^{2n}$
\begin{eqnarray}
| f_{1}(x)| \leq \alpha_{1}(\|x\|+1)^{2}, &| f_{2}(x)| \leq \alpha_{2}(\|x\|+1)^{2},\\
\| \nabla f_{1}(x)\| \leq \alpha_{3}(\|x\|+1), & \| \nabla f_{2}(x)\| \leq \alpha_{4}(\|x\|+1).
\end{eqnarray}
Moreover, by assumption we have finiteness of the energy 
$$
\|\nabla \mathcal{A}^{H}(u)\|_{L^{2}(\mathbb{R}\times S^{1})}^{2}=\int_{\mathbb{R}}\Big(|\partial_{s}\eta(s)|^{2}+\int_{0}^{1}\|\partial_{s}v(s,t)\|^{2}dt\Big)ds \leq \mathfrak{e},
$$
which implies the bound on the length of the interval $[s_{0},s_{1}]$ estimated in (\ref{eqn:time}). More precisely, as shown in Lemma \ref{lem:Kbound}, we have the following relation
$$
u(s)\notin \mathcal{B}^{\Gamma}(\mathfrak{a},\mathfrak{y},\varepsilon) \quad \forall\ s\in [s_{0},s_{1}], \qquad \Rightarrow \qquad |s_{1}-s_{0}|\leq \tfrac{\mathfrak{e}}{\varepsilon^{2}}.
$$
Combining the above results with the global bound in the $L^{2}\times \mathbb{R}$ norm on $u(s)$, we can estimate the $W^{1,1}$ norm of $f$:
\begin{align*}
\| f \|_{L^{1}(\Omega)} &\leq \mathfrak{y}^{2} \|f_{1}(v)\|_{L^{1}(\Omega)}+\|\partial_{s}\eta f_{2}(v)\|_{L^{1}(\Omega)}\\
& \leq \alpha_{1}\mathfrak{y}^{2} (\mathbf{v}+1)^{2}|s_{1}-s_{0}|+\alpha_{2}\|\partial_{s}\eta\|_{L^{2}(\Omega)}\sqrt{|s_{1}-s_{0}|}(\mathbf{v}+1)^{2}\\
& \leq \tfrac{\mathfrak{e}}{\varepsilon}(\mathbf{v}+1)^{2} (\alpha_{1}\tfrac{\mathfrak{y}^{2}}{\varepsilon}+\alpha_{2})\\
\|\partial_{t}f\|_{L^{1}(\Omega)} & \leq (\mathfrak{y}^{2} \|\nabla f_{1}(v)\|_{L^{2}(\Omega)}+\|\partial_{s}\eta \nabla f_{2}(v)\|_{L^{2}(\Omega)})\|\partial_{t}v\|_{L^{2}(\Omega)}\\
& \leq (\alpha_{3}\mathfrak{y}^{2}(\mathbf{v}+1)\sqrt{|s_{1}-s_{0}|}+\alpha_{4}(\mathbf{v}+1)\|\partial_{s}\eta\|_{L^{2}(\Omega)})\|\partial_{t}v\|_{L^{2}(\Omega)}\\
& \leq (\mathbf{v}+1)(\alpha_{3}\mathfrak{y}^{2}\tfrac{\sqrt{\mathfrak{e}}}{\varepsilon}+\alpha_{4}\|\partial_{s}\eta\|_{L^{2}(\Omega)})(\|\partial_{s}v\|_{L^{2}(\Omega)}+\mathfrak{y}(M\mathbf{v}+h_{1})\sqrt{|s_{1}-s_{0}|})\\
& \leq \mathfrak{e}(\mathbf{v}+1)(\alpha_{3}\tfrac{\mathfrak{y}^{2}}{\varepsilon}+\alpha_{4})(1+\tfrac{\mathfrak{y}}{\varepsilon}(M\mathbf{v}+h_{1}))
\end{align*}
The second to last equation follows from the Floer equations and (\ref{eqn:nablaH}), which can be applied to the norm of $X^{H}$. Now, if we keep in mind that from the Floer equations it also follows that 
$$
\partial_{ss}\eta = - \int_{S^{1}}dH_{v}(\partial_{s}v)dt,
$$
then we can apply it to estimate the norm of $\partial_{s}f$ and obtain
\begin{align*}
\|\partial_{s}f\|_{L^{1}(\Omega)} & \leq (\mathfrak{y}^{2} \|\nabla f_{1}(v)\|_{L^{2}(\Omega)}+\|\partial_{s}\eta \nabla f_{2}(v)\|_{L^{2}(\Omega)})\|\partial_{s}v\|_{L^{2}(\Omega)}\\
& + 2\mathfrak{y}\| \partial_{s}\eta\ f_{1}(v)\|_{L^{1}(\Omega)}+\|\partial_{ss}\eta\ f_{2}(v)\|_{L^{1}(\Omega)}\\
& \leq \sqrt{\mathfrak{e}}(\mathbf{v}+1)(\alpha_{3}\tfrac{\mathfrak{y}^{2}}{\varepsilon}+\alpha_{4})\|\partial_{s}v\|_{L^{2}(\Omega)}\\
& + 2\mathfrak{y} \alpha_{1}(\mathbf{v}+1)^{2}\| \partial_{s}\eta\|_{L^{1}(\Omega)}+\alpha_{2}(\mathbf{v}+1)^{2}\| dH(\partial_{s}v)\|_{L^{1}(\Omega)}\\
& \leq \mathfrak{e}(\mathbf{v}+1)(\tfrac{1}{\varepsilon}(\mathbf{v}+1)( 2\alpha_{1}\mathfrak{y}+\alpha_{2}(M\mathbf{v}+h_{1}))+\alpha_{3}\tfrac{\mathfrak{y}^{2}}{\varepsilon}+\alpha_{4}).
\end{align*}

In this way we have bounded $W^{1,1}$ norm of $f$ on $\Omega$. Now using the Sobolev embedding of $W^{1,1}$ in $L^{2}$, we can in fact obtain that the $L^{2}$ norm of $f$ is bounded by a constant, which depends only on the parameters of the Hamiltonian, the chosen value of $\varepsilon$ and the constants $\mathbf{v}, \mathfrak{y}, \mathfrak{e}$.

\end{proof}

Let us now consider a moduli space $\mathscr{M}^{\Gamma}(\Lambda_{0},\Lambda_{1})$ as in Theorem \ref{twr:ModuliCompact}. By Proposition \ref{prop:Floerboundv2} there exists a compact set $K^{\infty}\subseteq \mathbb{R}^{2n}$ satisfying (\ref{eqn:VHK}) and (\ref{eqn:BK}). Fix $u\in \mathscr{M}^{\Gamma}(\Lambda_{0},\Lambda_{1})$, $u(s,t)=(v(s,t), \eta(s))$, and take a connected component $\Omega\subseteq v^{-1} (\mathbb{R}^{2n}\setminus K^{\infty})$ as in (\ref{eqn:Omg}). Then on $\Omega$, we can apply Proposition \ref{prop:max} to a composition of $u$ with the radial function $F$ directly, which together with Aleksandrov's maximum principle assures that there exists a constant $C(\mathfrak{e},\mathfrak{y},\mathbf{v},\varepsilon)>0$, such that the following inequality is satisfied
$$
\sup_{\Omega}\|v(s,t)\|  \leq \sup_{\partial K^{\infty}}\|v(s,t)\|+C(\mathfrak{e},\mathfrak{y},\mathbf{v},\varepsilon).
$$
By Proposition \ref{prop:max} the constant $C(\mathfrak{e},\mathfrak{y},\mathbf{v},\varepsilon)>0$ does not depend on the choice of $u \in \mathscr{M}^{\Gamma}(\Lambda_{0},\Lambda_{1})$ or $\Omega$, therefore, we can conclude that for all $u\in\mathscr{M}^{\Gamma}(\Lambda_{0},\Lambda_{1})$ we have
$$
\sup_{s\in \mathbb{R}}\|u(s)\|_{L^{\infty}(S^{1})\times \mathbb{R}} \leq \sup_{x\in K^{\infty}}\|x\|+\mathfrak{y}+ C(\mathfrak{e},\mathfrak{y},\mathbf{v},\varepsilon),
$$
thus establishing uniform $L^{\infty}\times \mathbb{R}$ bounds on $\mathscr{M}^{\Gamma}(\Lambda_{0},\Lambda_{1})$ and concluding the proof of Theorem \ref{twr:ModuliCompact}.

\appendix
\section{Geometrical properties of Hamiltonians}
\label{ssec:GeomHam}
In this Appendix we present two lemmas, which analyze the geometrical behavior of the $0$-level set of an admissible Hamiltonian, which is used in the proof of the partition of the set of infinitesimal action derivation in Proposition \ref{prop:partition} and in the definition of the tubular neighborhood in subsection \ref{ssec:TubN}.

\begin{lem}
Let $H:\mathbb{R}^{2n}\to\mathbb{R}$ satisfy (\ref{item:H1}) and (\ref{item:H3}).
Then for all $\delta>0$ there exists a $\mu(\delta,H)>0$, such that 
$$
H^{-1}(-\mu,\mu)\subseteq \Big\{x\in\mathbb{R}^{2n}\ \Big|\ \dist(x,H^{-1}(0))<\delta\Big\}.
$$
\label{lem:banan}
\end{lem}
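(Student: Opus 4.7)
The plan is to split $\mathbb{R}^{2n}$ into a large ball and its complement, since the quantitative regularity of $H^{-1}(0)$ is controlled by (\ref{item:H1}) at infinity and by a soft compactness argument on bounded sets. Throughout I will use that $H^{-1}(0)$ is nonempty: property (\ref{item:H3}) provides a vector field $X^{\ddagger}$ on $H^{-1}((-\nu,\nu))$ with $dH(X^{\ddagger})\geq c_{5}>0$, so its flow starting at any interior point crosses the level $H=0$ in finite time.

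Fix $\delta>0$. Using the linear growth estimate (\ref{eqn:nablaH2}) derived from (\ref{item:H1}), I would choose $R=R(\delta,H)$ large enough that $\|\nabla H(z)\|\geq 2$ for every $\|z\|\geq R-\delta$. For a point $x$ with $\|x\|\geq R$ and $|H(x)|\leq \mu$ (with $\mu\leq\delta$ still to be fixed), consider the flow $\phi_{t}$ of the normalized vector field $Y:=\nabla H/\|\nabla H\|^{2}$, which satisfies $dH(Y)=1$ and $\|Y\|\leq 1/2$ on the region $\|z\|\geq R-\delta$. Running the flow for time $-H(x)$ should land on $H^{-1}(0)$, and the expected bootstrap closes the argument: set $t^{*}:=\sup\{t\in[0,|H(x)|]:\|\phi_{s}(x)\|\geq R-\delta\ \text{for all}\ s\in[0,t]\}$; up to time $t^{*}$ the Euclidean displacement is at most $t^{*}/2\leq \mu/2<\delta$, so $\|\phi_{t^{*}}(x)\|>R-\delta$, and continuity forces $t^{*}=|H(x)|$. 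Hence $dist(x,H^{-1}(0))\leq \mu/2<\delta$.

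For the bounded region $\|x\|\leq R$, a standard sequential compactness argument will suffice. If no threshold $\mu$ worked for this piece, one would extract a sequence $x_{n}\in\overline{B_{R}(0)}$ with $H(x_{n})\to 0$ and $dist(x_{n},H^{-1}(0))\geq\delta$; passing to a convergent subsequence $x_{n}\to x_{\infty}\in\overline{B_{R}(0)}$, continuity of $H$ gives $H(x_{\infty})=0$, so $dist(x_{n},H^{-1}(0))\leq \|x_{n}-x_{\infty}\|\to 0$, a contradiction. This produces some $\mu_{R}=\mu_{R}(\delta,R,H)>0$; setting $\mu:=\min\{\mu_{R},\delta\}$ then handles both regimes at once. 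The only genuinely delicate point is the bootstrap in the unbounded region, which is elementary once $R$ is chosen so that the coercivity bound on $\|\nabla H\|$ comfortably survives the small displacement produced by the flow.
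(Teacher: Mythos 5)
Your argument is correct, but it follows a genuinely different route from the paper's. The paper works globally on $H^{-1}((-\nu,\nu))$: it combines the two gradient lower bounds coming from (\ref{item:H1}) and (\ref{item:H3}) — one effective for large $\|x\|$, the other effective near the origin but only on $H^{-1}((-\nu,\nu))$ — to obtain a single uniform positive lower bound $\inf_{H^{-1}(-\nu,\nu)}\|\nabla H\|>0$, and then runs the normalized gradient flow $\nabla H/\|\nabla H\|^{2}$ to get $dist(x,\Sigma)\leq |H(x)|/\inf\|\nabla H\|$ and hence an \emph{explicit} $\mu(\delta,H)$, linear in $\delta$ and computable from $c_{1},\dots,c_{5},\nu$. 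You instead split $\mathbb{R}^{2n}$ into a large ball and its complement: at infinity you use only (\ref{item:H1}) (via (\ref{eqn:nablaH2})) together with the same normalized flow and a bootstrap keeping the trajectory in the region where the gradient is large, while on the ball you use soft sequential compactness, which needs nothing beyond continuity of $H$. The upshot is that your proof does not really use (\ref{item:H3}) at all (your opening remark invoking $X^{\ddagger}$ for non-emptiness of $\Sigma$ is dispensable, and is in fact the one shaky point — $X^{\ddagger}$ may grow quadratically, so its flow could a priori blow up before reaching the zero level; better to use the bounded-speed normalized gradient flow there too, or simply note that both of your regime arguments produce points of $H^{-1}(0)$ when $H^{-1}(-\mu,\mu)\neq\emptyset$). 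What you lose relative to the paper is quantitativeness: your $\mu$ on the bounded region comes from a proof by contradiction and is not expressed in terms of the admissibility constants, whereas the paper deliberately tracks all constants explicitly so that they can be chosen uniformly over the perturbation family $\mathsf{H}+\mathscr{O}(\mathsf{H})$; for the lemma as stated (a single fixed $H$) this costs you nothing.
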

\begin{proof}
In the first step we show that for $\nu>0$ as in (\ref{item:H3}) $H$ has no critical points in the neighborhood $H^{-1}(-\nu,\nu)$, that is
$$
\inf_{H^{-1}(-\nu,\nu)}\|\nabla H\|>0.
$$
By (\ref{item:H1}) for every $x\in\mathbb{R}^{2n}$ we have 
$$
\|\nabla H(x)\| \geq \frac{c_{2}\|x\|^{2}-c_{3}}{c_{1}(\|x\|+1)}.
$$
On the other hand, by (\ref{item:H3}) for all $x\in H^{-1}(-\nu,\nu)$ we have
$$
\|\nabla H(x)\| \geq \frac{c_{5}}{c_{4}(\|x\|+1)}.
$$
If we denote
$$
f_{1}(r)  := \frac{c_{2}r^{2}-c_{3}}{c_{1}(r+1)},\qquad \textrm{and} \qquad
f_{2}(r)  :=  \frac{c_{5}}{c_{4}(r+1)},
$$
then we have
$$
\inf_{x\in H^{-1}(-\nu,\nu)}\|\nabla H(x)\| \geq \inf_{r\geq 0}\max\{f_{1}(r), f_{2}(r)\}.
$$
An analysis of the derivatives and the asymptotic behavior of the two functions, brings us to a conclusion that there exists an $r_{0}\in(0,+\infty)$, such that $f_{1}(r_{0})=f_{2}(r_{0})$. That means that the function $\max\{f_{1}(r), f_{2}(r)\}$ obtains its minimum at $r_{0}$ and this minimum is positive, since $f_{2}$ is everywhere positive.

Denote $\Sigma=H^{-1}(0)$. On $H^{-1}(-\nu,\nu)$ we can define a flow $\psi$ in the following way
$$
\frac{d}{dt}\psi(t,x) = \frac{\nabla H(\psi(t,x))}{\|\nabla H(\psi(t,x))\|^{2}}.
$$
Observe that
$$
H(\psi(t,x))=H(x)+t\quad\textrm{since}\quad\frac{d}{dt}H(\psi(t,x)) = 1.
$$
That means that
$$
\psi:(-\nu, \nu) \times \Sigma \to H^{-1}(-\nu, \nu),
$$
is a well defined bijection. Note that for every $(t,x)\in (-\nu, \nu) \times \Sigma$, we have
\begin{align*}
\dist(\psi(t,x),\Sigma) & \leq \|\psi(t,x)-\psi(0,x)\|\\
& \leq \int_{0}^{t} \|\frac{d}{dt}\psi(\tau,x)\| d\tau\\
& \leq \int_{0}^{t} \frac{1}{\|\nabla H(\psi(\tau,x))\|} d\tau\\
& \leq \frac{t}{\inf_{r\geq 0}\max\{f_{1}(r), f_{2}(r)\}}\\
& = \frac{H(\psi(t,x))}{\inf_{r\geq 0}\max\{f_{1}(r), f_{2}(r)\}}
\end{align*}
Therefore, for all $0<\delta$ if denote
$$
\mu(\delta,H):= \min\{\nu, \delta\inf_{r\geq 0}\max\{f_{1}(r), f_{2}(r)\}\},
$$
then for all $x\in H^{-1}(-\mu,\mu)$ we have
$$
\dist(x,\Sigma) \leq  \frac{H(x)}{\inf_{r\geq 0}\max\{f_{1}(r), f_{2}(r)\}} \leq \frac{\mu}{\inf_{r\geq 0}\max\{f_{1}(r), f_{2}(r)\}} \leq \delta.
$$
\end{proof}

The following lemma ensures that for every admissible Hamiltonian, there exists a tubular neighborhood with a constant radius, which is used in Section \ref{sec:critMfld} to prove the boundedness of Floer trajectories.

\begin{lem}
Let $H:\mathbb{R}^{m}\to \mathbb{R}$ be a smooth function. Denote $\Sigma=H^{-1}(0)$ and let $N(\Sigma)$ be its normal bundle. For $\delta>0$ denote
$$
N_{\delta}(\Sigma):=\{ (x,v)\in N(\Sigma)\ |\ \|v\|<\delta\}.
$$
If 
$$
\inf_{\Sigma}\|\nabla H\|>0 \qquad \textrm{and} \qquad \sup_{\mathbb{R}^{m}} \|D^{2}H\| = M<+\infty,
$$
then there exists $\tilde{\delta}(H)>0$, such that
$$
exp: N_{\tilde{\delta}(H)}(\Sigma) \to \{x\in\mathbb{R}^{m}\ |\ \dist(x,\Sigma)<\tilde{\delta}(H)\},
$$
is a diffeomorphism.
\label{lem:TubN}
\end{lem}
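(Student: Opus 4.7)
The plan is to parametrize $N(\Sigma)$ via the unit normal and extract all quantitative control directly from the two a priori constants $c_0 := \inf_\Sigma \|\nabla H\| > 0$ and $M := \sup \|D^2 H\|$. Setting $\nu(x) := \nabla H(x)/\|\nabla H(x)\|$ and trivializing the normal line bundle by $(x, t) \mapsto (x, t\nu(x))$, the exponential map becomes $\Phi(x, t) = x + t\,\nu(x)$. A quotient-rule computation yields the uniform Lipschitz bound $\|d\nu_x\|_{\mathrm{op}} \leq 2M/c_0$ on $\Sigma$, from which the differential $d\Phi_{(x,t)}(w, s) = w + t\,d\nu_x(w) + s\,\nu(x)$ decomposes orthogonally (since $d\nu_x(w) \in T_x\Sigma$ as a consequence of $\|\nu\| \equiv 1$) and satisfies $\|d\Phi_{(x,t)}(w, s)\|^2 \geq (1 - 2M|t|/c_0)^2 \|w\|^2 + s^2$. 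Hence $\Phi$ is an immersion whenever $|t| < c_0/(2M)$.

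Surjectivity onto $\{y \in \mathbb{R}^m : \mathrm{dist}(y,\Sigma) < \tilde\delta\}$ will be straightforward: $\Sigma = H^{-1}(0)$ is closed, so any $y$ within $\tilde\delta$ of $\Sigma$ admits a closest point $x^*$ by compactness of $\Sigma \cap \overline{B_{\tilde\delta}(y)}$, and the first-order condition at $x^*$ produces $y - x^* \in N_{x^*}\Sigma$ with $\|y - x^*\| < \tilde\delta$.

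The main obstacle is global injectivity, because in the non-compact setting one cannot appeal to compactness of $\Sigma$ to obtain a uniform injectivity radius: it must instead be read off directly from $c_0$ and $M$. Suppose $\Phi(x_1, t_1) = \Phi(x_2, t_2)$ with $|t_i| < \tilde\delta$; rearranging gives $\|x_1 - x_2\| \leq 2\tilde\delta$, and the Lipschitz bound on $\nu$ yields $\langle \nu(x_1), \nu(x_2) \rangle \geq 1/2$ once $\tilde\delta \leq c_0/(4M)$. The key quantitative ingredient is a second-order Taylor estimate: applying Taylor's theorem with Lagrange remainder to $s \mapsto H((1-s) x_1 + s x_2)$ and using $H(x_1) = H(x_2) = 0$ together with $\|D^2 H\| \leq M$ gives
\[
|\langle \nu(x_1), x_2 - x_1 \rangle| \;\leq\; \frac{M}{2 c_0} \|x_1 - x_2\|^2.
\]
Combining this with the algebraic identity $\langle \nu(x_1), x_2 - x_1 \rangle = t_1 - t_2 \langle \nu(x_1), \nu(x_2) \rangle$ and the Pythagorean-style identity $\|x_1 - x_2\|^2 = (t_1 - t_2)^2 + t_1 t_2 \|\nu(x_1) - \nu(x_2)\|^2$, a short case analysis on the sign of $t_1 t_2$ yields an inequality of the form $\|x_1 - x_2\| \leq C (M/c_0) \|x_1 - x_2\|^2$ with an explicit constant $C$. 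Choosing $\tilde\delta(H)$ smaller than both $c_0/(4M)$ and $c_0/(2 C M)$ then rules out the non-trivial alternative $\|x_1 - x_2\| \geq c_0/(C M)$, forcing $x_1 = x_2$ and consequently $t_1 = t_2$; together with the immersion property this promotes $\Phi$ to a diffeomorphism.
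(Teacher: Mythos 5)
Your argument is correct, but it takes a genuinely different route from the paper. The paper does not prove the tubular neighborhood theorem directly: it invokes Theorem 2.31 of Eldering's book on uniform tubular neighborhoods for \emph{uniformly embedded} submanifolds, and the whole proof consists of verifying the two defining conditions of uniform embeddedness --- (i) that $\Sigma\cap B(x,\delta_0)$ is connected for a radius $\delta_0$ depending only on $M$ and $\inf_\Sigma\|\nabla H\|$, and (ii) that $\Sigma$ is uniformly a graph over its tangent spaces with first and second derivatives of the graphing functions bounded in terms of the same two constants. You instead prove everything from scratch for the map $\Phi(x,t)=x+t\,\nu(x)$: a uniform immersion estimate via the bound on $d\nu$, surjectivity via closest-point projection onto the closed set $\Sigma$, and injectivity via the second-order Taylor estimate $|\langle\nu(x_1),x_2-x_1\rangle|\le \tfrac{M}{2c_0}\|x_1-x_2\|^2$. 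That Taylor estimate is in fact the same mechanism as the paper's Step 1 (there it is used to show two nearby components of $\Sigma\cap B(x,\delta_0)$ would force $\|\nabla H(z)\|<\inf_\Sigma\|\nabla H\|$), while your immersion computation replaces the paper's Step 2. What your approach buys is self-containedness and an explicit $\tilde\delta(H)$ without appealing to an external theorem; what it costs is that you must handle global injectivity and surjectivity of $\exp$ by hand, which the citation absorbs. One point you should make precise: the bound $\langle\nu(x_1),\nu(x_2)\rangle\ge\tfrac12$ should not be deduced from the \emph{intrinsic} Lipschitz bound $\|d\nu_x\|_{\mathrm{op}}\le 2M/c_0$ on $\Sigma$ (that would require comparing intrinsic and Euclidean distances on the non-compact hypersurface), but rather directly from $\|\nabla H(x_1)-\nabla H(x_2)\|\le M\|x_1-x_2\|$ along the Euclidean segment --- available because $D^2H$ is bounded on all of $\mathbb{R}^m$ --- combined with the standard estimate for normalized vectors; this yields the same constant and closes the argument.
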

Observe that in Lemma \ref{lem:banan} it was shown that from Properties (\ref{item:H1}) and (\ref{item:H3}) it follows that:
$$
\inf_{\Sigma}\|\nabla H\| >0.
$$
As a result, any admissible Hamiltonian $H:\mathbb{R}^{2n}\to \mathbb{R}$ satisfies the assumptions of Lemma \ref{lem:TubN}, hence its $0$ level set admits a well defined $\delta$-tubular neighborhood.
\begin{proof}
The proof relies on Theorem 2.31 from \cite{Eldering2013}, which states that $\tilde{\delta}(H)>0$ exists provided $\Sigma$ is a uniformly embedded submanifold as defined in Definition 2.21 and Remark 2.22 in \cite{Eldering2013}. 

A hypersurface $\Sigma=H^{-1}(0)$ is a uniformly embedded submanifold of $\mathbb{R}^{m}$ if there exists $\delta_{0}>0$, such that for every $x\in \Sigma$
\begin{enumerate}[label*=\arabic*.]
\item the intersection of $\Sigma$ with a ball of radius $\delta_{0},\ B(x,\delta_{0})\cap \Sigma$ has only one connected component.
\item there exists a function $\varphi: Ker(d_{x}H) \to \mathbb{R}$, such that $\Sigma$ is locally a graph of $\varphi$ over the tangent space $T_{x}\Sigma$, i.e.
$$
B(x,\delta_{0})\cap \Sigma = \Big\{ y+\varphi(y)\frac{\nabla H(x)}{\|\nabla H(x)\|}+x\ \Big|\ y\in Ker(d_{x}H), \quad \|y\|< \delta_{0}\Big\},
$$
and the functions $\varphi$ corresponding to different $x\in\Sigma$ have their first and second derivatives uniformly bounded.
\end{enumerate}
We will first prove 1. and then 2.

\textbf{Proof of 1:}\\
We will show that for every $x\in\Sigma$ the intersection of $\Sigma$ with a ball of radius $\frac{1}{M}\inf_{\Sigma}\|\nabla H\|$:
$$
B\big(x, \tfrac{1}{M}\inf_{\Sigma}\|\nabla H\|\big)\cap \Sigma,
$$
has only one connected component. Suppose the opposite is true i.e. there exists a $x\in \Sigma$ and connected component $\widetilde{\Sigma}$ of $B(x, \frac{1}{M}\inf_{\Sigma}\|\nabla H\|)\cap \Sigma$, such that $x\notin \widetilde{\Sigma}$. Then there exists $z\in \widetilde{\Sigma}$, such that
\begin{equation}
0<\dist(\widetilde{\Sigma},x)= \|x-z\|<\frac{1}{M}\inf_{\Sigma}\|\nabla H\|.
\label{eqn:distXZ}
\end{equation}
Since $z\in \widetilde{\Sigma}$ minimizes the distance between $x$ and $\widetilde{\Sigma}$, the vector $x-z$ is perpendicular to $\widetilde{\Sigma}$, i.e.
\begin{equation}
\frac{x-z}{\|x-z\|}=\frac{\nabla H(z)}{\|\nabla H(z)\|}.
\label{eqn:deff}
\end{equation}
Define a function $f : [0,1] \to \mathbb{R}$
$$
f(t) := H(tx+(1-t)z).
$$
Observe that $f(0)=f(1)=0$, hence there exists $t_{0}\in (0,1)$, such that $f'(t_{0})=0$. 
On one hand, we will have
$$
| f'(t_{0})-f'(0)|  \leq t_{0} \sup_{t\in[0,t_{0}]}|f''(t)| = t_{0} \sup_{t\in[0,t_{0}]}|D^{2}_{tx+(1-t)z}H(x-z,x-z)| \leq M\|x-z\|^{2}.
$$
On the other hand, by (\ref{eqn:deff}) we have
$$
| f'(t_{0})-f'(0)|=| f'(0)| = d_{z}H(x-z)=\|\nabla H(z)\| \|x-z\|,
$$
and the two inequalities above combined with (\ref{eqn:distXZ}) lead to a contradiction:
$$
\|\nabla H(z)\| \leq M \|x-z\| <\inf_{\Sigma} \|\nabla H\|.
$$
\textbf{Proof of 2:}\\
Fix $x\in\Sigma$. By definition $\Sigma$ is a regular level set of of $H$ in $\mathbb{R}^{m}$, hence there exists a neighborhood $V$ of $0$ in $T_{x}\Sigma$, a neighborhood $U_{x}$ of $x$ in $\mathbb{R}^{m}$ and a smooth function $\varphi:V \to \mathbb{R}$, such that
$$
U_{x}\cap \Sigma = \Big\{ y+\varphi(y)\frac{\nabla H(x)}{\|\nabla H(x)\|}+x\ \Big|\ y\in V\Big\}.
$$
Let us analyze $\varphi$. Since $H$ vanishes on $\Sigma$ we can find the following relations between $H$ and $\varphi$:
\begin{align}
0 & = H\Big(y+\varphi(y)\frac{\nabla H(x)}{\|\nabla H(x)\|}+x\Big)\nonumber \\
0 & = \frac{d}{d y_{i}} H\Big(y+\varphi(y)\frac{\nabla H(x)}{\|\nabla H(x)\|}+x\Big)\nonumber\\
& = dH(\partial_{y_{i}})+\frac{\partial\varphi}{\partial y_{i}}(y)dH\Big(\frac{\nabla H(x)}{\|\nabla H(x)\|}\Big)\label{eqn:phi1}\\
0 & = \frac{d^{2}}{d y_{i} d y_{j}}H\Big(y+\varphi(y)\frac{\nabla H(x)}{\|\nabla H(x)\|}+x\Big)\nonumber\\
& =  \frac{d}{d y_{j}}\Big(dH(\partial_{y_{i}})+\frac{\partial\varphi}{\partial y_{i}}(y)dH\Big(\frac{\nabla H(x)}{\|\nabla H(x)\|}\Big)\Big)\nonumber\\
& = D^{2}H (\partial_{y_{i}},\partial_{y_{j}})+\frac{\partial\varphi}{\partial y_{i}}(y)D^{2}H\Big(\partial_{y_{j}},\frac{\nabla H(x)}{\|\nabla H(x)\|}\Big)+ \frac{\partial\varphi}{\partial y_{j}}(y)D^{2}H\Big(\partial_{y_{i}},\frac{\nabla H(x)}{\|\nabla H(x)\|}\Big)\nonumber\\
& + \frac{\partial\varphi}{\partial y_{i}}(y)\frac{\partial\varphi}{\partial y_{j}}(y) D^{2}H\Big(\frac{\nabla H(x)}{\|\nabla H(x)\|},\frac{\nabla H(x)}{\|\nabla H(x)\|}\Big)+ \frac{\partial^{2}\varphi}{\partial y_{i} \partial y_{j}}(y)dH\Big(\frac{\nabla H(x)}{\|\nabla H(x)\|}\Big)\label{eqn:phi2}
\end{align}
To estimate $\frac{\partial\varphi}{\partial y_{i}}(y)$, we will first show that for all $z \in B(x,\frac{1}{4M}\inf_{\Sigma})$ the following relation holds
\begin{equation}
dH_{z}(\nabla H(x))>\tfrac{1}{2}\|\nabla H(x)\|\ \|\nabla H(z)\|.
\label{eqn:dHzdHx}
\end{equation}
If we calculate the first derivative with respect to $z$ of the difference of the two terms
on both sides of this inequality, we obtain
\begin{align*}
\nabla (dH_{z}(\nabla H(x))-\tfrac{1}{2}\|\nabla H(x)\|\ \|\nabla H(z)\|) & = D^{2}_{z}H\Big(\nabla H(x)-\tfrac{1}{2}\frac{\|\nabla H(x)\|}{\|\nabla H(z)\|}\nabla H(z)\Big),\\
\|\nabla (dH_{z}(\nabla H(x))-\tfrac{1}{2}\|\nabla H(x)\|\ \|\nabla H(z)\|)\| & \leq \tfrac{3}{2}M \|\nabla H(x)\|,
 \end{align*}
which allows us to estimate
$$
dH_{z}(\nabla H(x))-\tfrac{1}{2}\|\nabla H(x)\|\ \|\nabla H(z)\| \geq \tfrac{1}{2}\|\nabla H(x)\|^{2}- \tfrac{3}{2}M \|\nabla H(x)\| \|z-x\| .
$$
Therefore, (\ref{eqn:dHzdHx}) is satisfied whenever
$$
z \in  B(x, \tfrac{1}{3M}\inf_{\Sigma}\|\nabla H\|).
$$
Now combining (\ref{eqn:phi1}) and (\ref{eqn:dHzdHx}) we can conclude that for 
$$
y+\varphi(y)\frac{\nabla H(x)}{\|\nabla H(x)\|}+x \in B(x, \tfrac{1}{3 M}\inf_{\Sigma}\|\nabla H\|)
$$
one has the following estimate
$$
\Big|\frac{\partial\varphi}{\partial y_{i}}(y)\Big| \leq \frac{\|\nabla H(x)\|\ |dH(\partial_{y_{i}})|}{|dH(\nabla H(x))|} \leq 2 
$$
Using the above estimate along with (\ref{eqn:phi2}) and (\ref{eqn:dHzdHx}) we obtain
\begin{align*}
\Big|\frac{\partial^{2}\varphi}{\partial y_{i} \partial y_{j}}(y)\Big| & \leq M \Big|dH\Big(\frac{\nabla H(x)}{\|\nabla H(x)\|}\Big)\Big|^{-1}\Big(1+\Big|\frac{\partial\varphi}{\partial y_{i}}(y)\Big|+\Big|\frac{\partial\varphi}{\partial y_{j}}(y)\Big|+\Big|\frac{\partial\varphi}{\partial y_{i}}(y)\Big|\Big|\frac{\partial\varphi}{\partial y_{j} }(y)\Big|\Big)\\
& \leq 18M (\inf_{\Sigma}\|\nabla H\|)^{-1}.
\end{align*}
This concludes the proof that $\Sigma$ is an uniformly embedded manifold for
$$
\delta_{0}=\frac{1}{3M}\inf_{\Sigma}\|\nabla H\|,
$$
and therefore by Theorem 2.31 from \cite{Eldering2013} there exists a uniform tubular neighborhood for $\Sigma$.
\end{proof}

\section{Calculations on the maximum principle}
\label{app:f}
\begin{lem}
Consider $F:\mathbb{R}^{2n}\to\mathbb{R}$ to be the radial function  
$$
F(x):= \tfrac{1}{4}\|x\|^{2},$$
and $H$ a Hamiltonian satisfying property (\ref{item:H2}). Define $f_{1}, f_{2}$ as in (\ref{eqn:f1}) and (\ref{eqn:f2}). Then there exist constants $\alpha_{1},\alpha_{2},\alpha_{3},\alpha_{4}>0$, 
such that the following holds for all $x\in\mathbb{R}^{2n}$
\begin{eqnarray}
| f_{1}(x)| \leq \alpha_{1}(\|x\|+1)^{2}, &\| \nabla f_{1}(x)\| \leq \alpha_{3}(\|x\|+1), \label{eqn:f1calc}\\
| f_{2}(x)| \leq \alpha_{2}(\|x\|+1)^{2}, & \| \nabla f_{2}(x)\| \leq \alpha_{4}(\|x\|+1).\label{eqn:f2calc}
\end{eqnarray}
\label{lem:f1}
\end{lem}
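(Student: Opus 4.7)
The plan is a direct computation using the explicit form of $F$ together with the consequences of property (\ref{item:H2}). Since $\omega_{0}$ is standard we have $X^{H}=J_{0}\nabla H$, and since $F(x)=\tfrac14\|x\|^{2}$ we get $\nabla F(x)=\tfrac12 x$ and $\mathrm{Hess}\,F=\tfrac12\mathrm{Id}$. A one-line calculation gives the auxiliary functions
\[
g(x):=dF_{x}(X^{H})=\tfrac12\langle x, J_{0}\nabla H(x)\rangle,\qquad
h(x):=d^{\mathbb{C}}F_{x}(X^{H})=dF_{x}(J_{0}X^{H})=-\tfrac12\langle x,\nabla H(x)\rangle,
\]
so $f_{2}=h$. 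I would then compute
\[
\nabla g(x)=\tfrac12\bigl(J_{0}\nabla H(x)-\mathrm{Hess}_{x}H\cdot J_{0}x\bigr),\qquad
\nabla h(x)=-\tfrac12\bigl(\nabla H(x)+\mathrm{Hess}_{x}H\cdot x\bigr),
\]
and $d_{x}(dF(X^{H}))(X^{H})=\langle \nabla g(x), J_{0}\nabla H(x)\rangle$.

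For the bounds in (\ref{eqn:f2calc}) I would apply (\ref{eqn:HessH}) and (\ref{eqn:nablaH}) directly: $\|\nabla h(x)\|\le\tfrac12(h_{1}+2M\|x\|)$ and $|h(x)|\le\tfrac12\|x\|(h_{1}+M\|x\|)$, which are manifestly of the required form after enlarging constants. For the size bounds in (\ref{eqn:f1calc}) I would bound each summand of $f_{1}$ separately, observing that $\|\nabla H(x)\|^{2}\le(h_{1}+M\|x\|)^{2}$, $\|\nabla g(x)\|^{2}$ and $\|\nabla h(x)\|^{2}$ are $O((\|x\|+1)^{2})$, and $|\langle \nabla g, J_{0}\nabla H\rangle|\le\|\nabla g\|\cdot\|\nabla H\|=O((\|x\|+1)^{2})$.

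The gradient estimates in (\ref{eqn:f1calc}) are the only place where (\ref{item:H2}) is needed in its sharp form rather than through its consequences. Differentiating $\|\nabla H\|^{2}$ produces $2\mathrm{Hess}_{x}H\cdot \nabla H(x)$, which is $O(\|x\|+1)$ by (\ref{eqn:HessH})–(\ref{eqn:nablaH}). Differentiating $\|\nabla g\|^{2}$ and $\|\nabla h\|^{2}$ introduces third derivatives of $H$, but always in the combination $(D^{3}H_{x})\cdot x$ coming from the terms $\mathrm{Hess}\,H\cdot J_{0}x$ and $\mathrm{Hess}\,H\cdot x$; by (\ref{item:H2}) this combination is pointwise bounded by $L$, so the corresponding contribution to $\nabla f_{1}$ is $O(\|x\|+1)$. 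The same mechanism controls $\nabla\bigl(\langle \nabla g, J_{0}\nabla H\rangle\bigr)$. Summing the four contributions and enlarging the constants yields the desired $\alpha_{3}(\|x\|+1)$ bound.

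The main obstacle is purely bookkeeping: one must track that every appearance of $D^{3}H$ is paired with a factor of $x$, so that (\ref{item:H2}) can be invoked in its precise form $\|D^{3}H_{x}\|\cdot\|x\|\le L$ rather than as an unavailable pointwise bound on $D^{3}H$ alone. Once this pairing is checked term by term, no further analytic input is needed and the values of $\alpha_{1},\alpha_{2},\alpha_{3},\alpha_{4}$ can be read off as explicit polynomials in $h_{1},M$ and $L$.
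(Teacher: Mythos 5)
Your proposal is correct and follows essentially the same route as the paper: explicit computation of $\nabla F=\tfrac12 x$ and $\mathrm{Hess}\,F=\tfrac12\mathrm{Id}$, term-by-term bounds on the four summands of $f_{1}$ via (\ref{eqn:HessH})--(\ref{eqn:nablaH}), and the key observation that every occurrence of $D^{3}H$ is paired with $\nabla F=\tfrac12 x$ so that (\ref{item:H2}) applies in the form $\|D^{3}H_{x}\|\,\|x\|\le L$. The paper simply carries out the bookkeeping you outline and records explicit values of $\alpha_{1},\dots,\alpha_{4}$.
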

\begin{proof}
Recall the definition of $f_{1}$ and $f_{2}$ from (\ref{eqn:f1}) and (\ref{eqn:f2})
\begin{align*}
f_{1}(x) & :=d_{x}(dF(X^{H}))(X^{H}) -\|\nabla H(x)\|^{2} -\|\nabla (d^{\mathbb{C}}_{x}F(X^{H}))\|^{2}-\|\nabla(dF(X^{H}))(x)\|^{2},\\
f_{2}(x) & := d_{x}^{\mathbb{C}}F(X^{H}).
\end{align*}
Let us first investigate $f_{2}$ as the simpler of the two functions. We will bound $|f_{2}(x)|$ and $\|\nabla f_{2}(x)\|$ using the inequalities (\ref{eqn:HessH}) and (\ref{eqn:nablaH}) induced by the assumption (\ref{item:H2}) on $H$:
\begin{align}
f_{2}(x)=d^{\mathbb{C}}F(X^{H}) & = \omega_{0}(J X^{H},X^{F}) = - \langle \nabla H, \nabla F \rangle,\nonumber \\
|f_{2}(x)|=|d^{\mathbb{C}}F_{x}(X^{H})| & \leq \|\nabla H\|\ \|\nabla F\| \leq \tfrac{1}{2}\|x\|(M\|x\|+h_{1}),\\
d(d^{\mathbb{C}}F(X^{H}))(\xi) & = -(\langle \Hess_{x} H (\xi), \nabla F \rangle + \langle \nabla H, \Hess_{x} F (\xi)\rangle)\nonumber\\
& = -\langle \Hess_{x} F ( \nabla H) + \Hess_{x} H( \nabla F), \xi \rangle,\nonumber\\
\|\nabla f_{2}(x)\|=\|\nabla (d^{\mathbb{C}}F(X^{H}))\| & \leq \|\Hess_{x} F\|\ \|\nabla H\| + \|\Hess_{x} H\|\ \|\nabla F\|\nonumber \\
& \leq M\|x\|+\tfrac{1}{2}h_{1}.
\end{align}
From the above inequalities, we get (\ref{eqn:f2calc}) with 
$$
 \alpha_{2}:= \tfrac{1}{2}\max\{M, \tfrac{h_{1}^{2}}{4M}\} \qquad \textrm{and}\qquad \alpha_{4}:=\max\{M,\tfrac{1}{2}h_{1}\}.
 $$
 
We will now proceed with showing bounds on $|f_{1}(x)|$ and $\|\nabla f_{1}(x)\|$ by investigating and bounding each of the expressions separately:
\begin{align*}
dF(X^{H}) & = \omega_{0}(X^{H},X^{F}) = \langle J \nabla H, \nabla F \rangle, \\
d(dF(X^{H}))(\xi) & = \langle J \Hess_{x} H (\xi), \nabla F \rangle + \langle J \nabla H, \Hess_{x} F (\xi)\rangle\\
& = \langle \Hess_{x} F (J \nabla H) - \Hess_{x} H(J \nabla F), \xi \rangle, \\
\|\nabla (dF(X^{H}))\| & \leq \|\Hess_{x} F\|\ \|\nabla H\| + \|\Hess_{x} H\|\ \|\nabla F\| \\
& \leq \tfrac{1}{2} (M\|x\|+h_{1}) + \tfrac{1}{2} M \|x\| = M\|x\|+\tfrac{1}{2}h_{1},\\
d(\|\nabla (dF(X^{H}))\|^{2})(\xi) & = 2 \langle \nabla (dF(X^{H})), \Hess_{x}F(J \Hess_{x}H(\xi))\rangle\\
& - 2 \langle \nabla (dF(X^{H})), \Hess_{x}H(J\Hess_{x}F(\xi)) + D^{3}H(J \nabla F,\xi)\rangle,\\
\|\nabla(\|\nabla (dF(X^{H}))\|^{2})\| & \leq 2 \|\nabla (dF(X^{H}))\|(2\|\Hess_{x}F\|\ \|\Hess_{x}H\|+\|D^{3}H\|\ \|\nabla F\|)\\
& \leq 2 (M\|x\|+\tfrac{1}{2}h_{1})(M+\tfrac{1}{2}\|D^{3}H\|\ \|x\|)\\
& \leq 2(M\|x\|+\tfrac{1}{2}h_{1})(M+\tfrac{1}{2}L)\\
d(\|\nabla (d^{\mathbb{C}}F(X^{H}))\|^{2})(\xi) & = - 2 \langle \nabla (d^{\mathbb{C}}F(X^{H})),
\Hess_{x} F ( \Hess_{x} H (\xi))\\
& - 2 \langle \nabla (d^{\mathbb{C}}F(X^{H})), \Hess_{x} H( \Hess_{x} F(\xi))+D^{3}H(\nabla F, \xi)\rangle,\\
\|\nabla(\|\nabla (d^{\mathbb{C}} F(X^{H}))\|^{2})\| & \leq 2 \|\nabla (d^{\mathbb{C}}F(X^{H}))\|(2\|\Hess_{x}F\|\ \|\Hess_{x}H\|+\|D^{3}H\|\ \|\nabla F\|)\\
& \leq 2(M\|x\|+\tfrac{1}{2}h_{1})(M+\tfrac{1}{2}L),\\
d(dF(X^{H}))(X^{H}) & = \langle \Hess_{x} F (J \nabla H) - \Hess_{x} H(J \nabla F), X^{H} \rangle,\\
|d(dF(X^{H}))(X^{H})| & \leq (M\|x\|+\tfrac{1}{2}h_{1})(M\|x\|+h_{1}),\\
d(d(dF(X^{H}))(X^{H}))(\xi) & = \langle X^{H},\Hess_{x}F(J \Hess_{x}H(\xi))-\Hess_{x}H(J\Hess_{x}F(\xi))\rangle\\
& + \langle J \Hess_{x}H(\xi), \Hess_{x} F (J \nabla H) - \Hess_{x} H(J \nabla F)\rangle\\
& -\langle X^{H}, D^{3}H(J \nabla F,\xi)\rangle,
\end{align*}
\begin{align*}
\|\nabla (d(dF(X^{H}))(X^{H}))\| & \leq \|\nabla H\|(2\|\Hess_{x}F\|\ \|\Hess_{x}H\|+\|D^{3}H\|\ \|\nabla F\|)\\
& + \|\Hess_{x}H\|(\|\Hess_{x} F\|\ \|\nabla H\| + \|\Hess_{x} H\|\ \|\nabla F\|)\\
& \leq (M\|x\|+h_{1})(M+\tfrac{1}{2}L) + M( M\|x\|+\tfrac{1}{2}h_{1})
\end{align*} 
Now taking into account all the above calculations we obtain explicit bounds on $|f_{1}(x)|$ and $\|\nabla f_{1}(x)\|$ in terms of $\|x\|$:
\begin{align*}
|f_{1}(x)| & \leq |d(dF(X^{H}))(X^{H})| +\|\nabla H\|^{2} +\|\nabla (d^{\mathbb{C}}F(X^{H}))\|^{2} +\|\nabla(dF(X^{H}))\|^{2}\\
& \leq (M\|x\|+\tfrac{1}{2}h_{1})(M\|x\|+h_{1})+(M\|x\|+h_{1})^{2}+2(M\|x\|+\tfrac{1}{2}h_{1})^{2}\\
& \leq 4 M^{2}\|x\|^{2}+\tfrac{11}{2}M h_{1} \|x\| +2h_{1}^{2}\\
& \leq (2 M \|x\|+\tfrac{2}{3}h_{1})^{2},\\
\|\nabla f_{1}(x)\| & \leq \|\nabla (d(dF(X^{H}))(X^{H}))\|+\|\nabla (\|\nabla H\|^{2})\|+ \|\nabla(\|\nabla (d^{\mathbb{C}} F(X^{H}))\|^{2})\|\\
& +\|\nabla(\|\nabla (dF(X^{H}))\|^{2})\| \\
& \leq (M\|x\|+h_{1})(M+\tfrac{1}{2}L) + M( M\|x\|+\tfrac{1}{2}h_{1}) +2 M (M\|x\|+h_{1})\\
& +4(M\|x\|+\tfrac{1}{2}h_{1})(M+\tfrac{1}{2}L)\\
& \leq M(8M+\tfrac{5}{2}L)\|x\|+\tfrac{1}{2}h_{1}(7M+3L). 
\end{align*}
This way we obtain (\ref{eqn:f1calc}) with 
$$
 \alpha_{1}:= \max\{2M, \tfrac{2}{3}h_{1}\}^{2} \quad \textrm{and}\quad \alpha_{3}:=\max\{M(8M+\tfrac{5}{2}L),\tfrac{1}{2}h_{1}(7M+3L)\}.
 $$
\end{proof}

\section*{Acknowledgment}
We would like to thank Rob van der Vorst for suggesting to us the idea of trying to extend the definition of Rabinowitz Floer Homology to non-compact hypersurfaces, and in particular to level sets of Hamiltonian functions satisfying condition (\ref{item:H1}). Moreover, we would like to thank Alberto Abbondandolo for sharing with us his ideas about how to bound the Floer trajectories near the non-compact hypersurface using the Morse-Bott property. We would also like to thank Oliver Fabert, Thomas Rot, Kai Cieliebak and Urs Frauenfelder for fruitful discussions on this subject.

This work has been supported by the Vrije Competitie grant 613.001.111 \textit{Periodic motions on non-compact energy surfaces} of NWO. The second author also would like to thank Kai Zehmisch for his kind hospitality at the Westf\"{a}lische Wilhelms-Universit\"{a}t M\"{u}nster and for the partial financial support from the grant SFB/TRR 191 \textit{Symplectic Structures in Geometry, Algebra and Dynamics} of the DFG.

\bibliographystyle{amsplain}

\bibliography{./bibliography.bib}
\end{document}